\numberwithin{equation}{section}
\newtheorem{theorem}{Theorem}[section]
\newtheorem{defn}[theorem]{Definition}
\newtheorem{prop}[theorem]{Proposition}
\newtheorem{lemma}[theorem]{Lemma}
\newtheorem{remark}[theorem]{Remark}
\newcommand{\D}{\cancel{D}}
\newcommand{\eps}{\varepsilon}
\newcommand{\re}{\mathrm{Re} \, }
\newcommand{\tr}{\mathrm{Tr} \, }
\newcommand{\EE}{\tilde{B}}
\newcommand{\FF}{\tilde{A}}
\newcommand{\GG}{\tilde{D}}
\newcommand{\HH}{\tilde{C}}
\newcommand{\R}{\mathbb{R}}
\newcommand{\Z}{\mathbb{Z}}
\newcommand{\C}{\mathbb{C}}
\newcommand{\alo}{\alpha_l^{(1)}}
\newcommand{\alt}{\alpha_l^{(2)}}
\newcommand{\ali}{\alpha_l^{(i)}}
\newcommand{\alii}{\alpha_l^{(ii)}}
\newcommand{\Reso}{\mathfrak{R}}
\newcommand{\Resk}{\mathcal{R}}
\newcommand{\lo}{\lambda(\omega)}
\newcommand{\jostw}{\eta}
\newcommand{\jostk}{\xi}
\newcommand\numberthis{\addtocounter{equation}{1}\tag{\theequation}}
\title[Dirac decay estimates]{Dispersive decay estimates for Dirac equations with a domain wall}
\author{Joseph Kraisler}
\address{Department of Mathematics and Statistics, Amherst College, 31 Quadrangle Dr, Amherst, MA 01002, USA}
\email{jkraisler@amherst.edu}
\author{Amir Sagiv}
\address{Faculty of Mathematics, Technion - Israel Institute of Technology, Haifa 32000, Israel}
\email{amirsagiv@technion.ac.il}
\author{Michael I.\ Weinstein}
\address{Department of Applied Physics and Applied Mathematics and Department of Mathematics, Columbia University, 500 W120 Street, New York, NY 10027, USA}
\email{miw2103@columbia.edu}
\begin{document}
\maketitle
\begin{abstract}
Dispersive time-decay estimates are proved for a one-parameter
family of one-dimensional Dirac Hamiltonians with dislocations; these are operators which  interpolate between two phase-shifted massive Dirac Hamiltonians at $x=+\infty$ and $x=-\infty$.  This family of Hamiltonians arises in the theory of topologically protected states of one-dimensional quantum materials.
For certain values of the phase-shift parameter, $\tau$, the Dirac Hamiltonian has a {\it threshold resonance} at the endpoint of its essential spectrum. Such resonances are known to influence the time-decay rate.  Our main result  explicitly displays the transition in time-decay rate as $\tau$ varies between resonant and non-resonant values.
Our results appear to be the first dispersive time-decay estimates for Dirac Hamiltonians which are not a relatively compact perturbation of a free Dirac operator.
\medskip

\end{abstract}

\section{Introduction} \label{sec:intro}
We study the time-dynamics for a family of Dirac Hamiltonians, $\{\D(\tau)\}_{\tau\in S^1}$,  which arise in the theory of topologically protected states in one-dimensional asymptotically periodic   quantum systems with a dislocation defect (domain wall). The parameter $\tau\in S^1=\R/2\pi\Z$ specifies the size of the dislocation. 
  We now introduce the model 
  and in Section \ref{sec:top}  below, we discuss a context in which this class of models plays a central role.

Consider the following initial-value problem, describing the evolution of complex vector amplitude, $\alpha(t,x):\R_t\times \R_x \to \C^2$:
\begin{subequations}\label{eq:Diraceq}
\begin{equation}
    i\partial_t \alpha = \D (\tau)\alpha \, , 
 \qquad \alpha(0,x)=\alpha_0 (x)\in L^2 (\R ;\C^2) \, .
 \end{equation}
For each $\tau\in \R/2\pi\Z$, the  Dirac Hamiltonian, $\D(\tau)$, is given by:
\begin{align}\label{eq:Dtau}
 \D (\tau) &\equiv i\sigma_3\partial_x  + \sigma_1 \mathbbm{1}_{(-\infty,0)}(x)  + \sigma_\star(\tau)\mathbbm{1}_{[0,\infty)}(x)\ .
\end{align}
\end{subequations}
Here, $\sigma_j$ denote standard Pauli matrices (see \eqref{eq:Pauli}),
\begin{equation*}
\sigma_\star(\tau) \equiv \begin{pmatrix}
    0 & e^{-i\tau} \\ e^{i\tau} & 0
    \end{pmatrix}\ ,
\end{equation*} and 
  $\mathbbm{1}_{S}(x)$ denote the indicator function of a set $S\subseteq \R$. The curve $\tau\in S^1\mapsto\D(\tau)$ sweeps out a periodic  family of self-adjoint operators on $L^2(\R;\C^2)$ with common domain $H^1(\R;\C^2)$.  

{\it The goal of this paper is to provide a detailed understanding of the time-dynamics \eqref{eq:Diraceq} as the dislocation parameter $\tau$ varies; in particular the dispersive decay of solutions, as $t\to \infty$,  for initial data projected onto the continuous spectral part of $ \D (\tau)$.}

 Note that  
 \begin{equation*}
 \D (\tau) =\begin{cases}
 \D_+(\tau) \equiv i\sigma_3\partial_x  + \sigma_\star(\tau) & \textrm{for $x>0$}\\
  \D_-(\tau) \equiv i\sigma_3\partial_x  + \sigma_1   &  \textrm{for $x<0$}.
 \end{cases}
 \end{equation*}
If $\tau=0$ (or $\tau=2\pi$), $\D_+(0)=\D_-(0)$, and  $\D (0)$ is a constant coefficient (i.e., translation invariant) massive Dirac Hamiltonian. In contrast, for all 
 $\tau\notin\{0,2\pi\}$, the operator $\D_+(\tau)$ is non-trivially ``phase-shifted" from
  $\D_-(\tau)$. Such a defect is called a {\it domain wall} or a {\it dislocation}. 
  
Since $\D(0)=\D (2\pi)$ commutes with spatial translations, its spectrum consists entirely of essential 
 (continuous) spectrum, with a gap consisting of all energies in $(-1,+1)$. For $\tau \neq 0,2\pi$, however, $\D(\tau)$ has a defect-mode: an eigenvalue in this spectral gap, spatially localized around $x=0$.   
 The spectral properties of $\D(\tau)$ are well-known (see \cite{drouot2021bulk, drouot2020defect, fefferman2017topologically}) and are summarized in the following (also a consequence of  Proposition \ref{prop:eigs-poles} below):
 \begin{theorem}
     \label{prop:specD}
\begin{enumerate}
    \item For all $\tau\in[0,2\pi]$, the essential spectrum of $\D(\tau)$ is given by:
   \[ \sigma_{\rm ess}(\D(\tau)) = (-\infty,-1]\cup [1,\infty).\]
    \item For $\tau\in[0,2\pi]$, 
     \[ \sigma(\D(\tau)) = \begin{cases}
         (-\infty,-1]\ \cup\ \{\omega_{\tau}\}\ \cup\ [1,\infty),\quad \tau\in(0,2\pi)\\
         (-\infty,-1]\ \cup\ [1,\infty),\quad \tau=0, 2\pi
     \end{cases}.\]
     Here, $\omega_{\tau}$ is an eigenvalue of multiplicity one within the spectral gap (the gap within the essential spectrum) and is given by the expression:
     \[ \omega_{\tau} = \cos (\tau /2 ) \, . \]
      with corresponding one-dimensional eigenspace spanned by
          \begin{equation}
      \psi_\tau(x) = \frac12 \sqrt{\sin\left(\frac{\tau}{2}\right)} e^{-\sin\left(\frac{\tau}{2}\right)|x|}
      \begin{pmatrix}
          1 \\ -e^{i\frac{\tau}{2}}
      \end{pmatrix},\quad \|\psi_\tau\|_{L^2}=1.
    \label{eq:alpha*}  \end{equation}

\end{enumerate}
 \end{theorem}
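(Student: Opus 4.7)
The statement has two parts: the essential spectrum identification, and the identification of the point spectrum in the gap. These I would attack separately, using that $\D(\tau)$ is self-adjoint on $H^1(\R;\C^2)$ and that, although the operator is only piecewise constant, each piece $\D_\pm(\tau)$ is a constant-coefficient Dirac operator amenable to Fourier analysis.

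For the essential spectrum, the plan is to compute $\sigma(\D_\pm(\tau))$ via the Fourier transform. In both half-line models the symbol is of the form $\pm\sigma_3\xi + M$ with $M^2=I$ and $M^*=M$; squaring gives the dispersion relation $\omega^2=\xi^2+1$, so both half-line operators have (purely essential) spectrum $(-\infty,-1]\cup[1,\infty)$. To push this to $\D(\tau)$ I would construct Weyl sequences: given $\omega$ with $|\omega|\ge 1$, take a plane-wave solution of $\D_+(\tau)\alpha=\omega\alpha$, localize it by a slowly varying cut-off $\chi_n(x)=\chi((x-n)/n^{1/2})$ supported in $[n/2,2n]$, and verify $\|(\D(\tau)-\omega)\chi_n\alpha\|/\|\chi_n\alpha\|\to 0$. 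This gives $(-\infty,-1]\cup[1,\infty)\subseteq\sigma_{\mathrm{ess}}(\D(\tau))$. For the reverse inclusion, since the perturbation $\D(\tau)-\D(0)=(\sigma_\star(\tau)-\sigma_1)\mathbbm{1}_{(0,\infty)}$ is \emph{not} relatively compact, Weyl's theorem does not apply directly; instead I would construct the resolvent $(\D(\tau)-\omega)^{-1}$ for $\omega$ in the gap by variation of parameters on each half-line and matching at $x=0$, obtaining a bounded operator except at isolated poles, which is essentially what Proposition \ref{prop:eigs-poles} below supplies.

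For the point spectrum, any eigenvalue must lie in the gap: outside $[-1,1]$ the half-line ODEs $\D_\pm(\tau)\alpha=\omega\alpha$ have only oscillatory (non-$L^2$) solutions, so no embedded or exterior eigenvalue can occur. For $\omega\in(-1,1)$ I would reduce the eigenvalue problem to a finite-dimensional matching problem as follows. On $x<0$, the only $L^2$ solutions of $\D_-(\tau)\alpha=\omega\alpha$ have the form $e^{\lambda x}v_-$, where $(\sigma_1+i\lambda\sigma_3)v_-=\omega v_-$ forces $\lambda=\sqrt{1-\omega^2}$ and pins $v_-$ to a one-dimensional subspace; on $x>0$, the $L^2$ solutions are $e^{-\lambda x}v_+$ with $(\sigma_\star(\tau)-i\lambda\sigma_3)v_+=\omega v_+$, again with $\lambda=\sqrt{1-\omega^2}$ and $v_+$ one-dimensional. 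The $H^1$ domain forces the continuity $v_-=c\,v_+$ for some scalar $c$, which collapses to a single scalar compatibility equation in $(\omega,\tau)$. Solving this equation and checking it has a unique root in $(-1,1)$ yields $\omega_\tau=\cos(\tau/2)$ (with $\lambda=\sin(\tau/2)$), and the common direction of $v_\pm$ is then found explicitly to be $(1,-e^{i\tau/2})^{T}$. $L^2$-normalization fixes the prefactor $\tfrac{1}{2}\sqrt{\sin(\tau/2)}$ and produces \eqref{eq:alpha*}.

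The main obstacle I anticipate is not the existence computation (which is explicit) but the two uniqueness statements: ruling out embedded eigenvalues in $(-\infty,-1]\cup[1,\infty)$ and showing that the compatibility equation admits \emph{exactly one} root in the gap. The first follows from a direct ODE argument on each half-line (oscillatory solutions cannot match into an $L^2$ whole-line solution). The second is a short trigonometric check once the compatibility equation is written out. A minor bookkeeping issue is the degenerate case $\tau\in\{0,2\pi\}$, where $\sin(\tau/2)=0$ makes the would-be eigenfunction delocalize into the threshold of the continuous spectrum, consistent with the absence of a gap eigenvalue in the second case of the theorem and foreshadowing the threshold-resonance phenomenon studied in the rest of the paper.
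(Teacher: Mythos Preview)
Your proposal is correct and follows essentially the same route as the paper. The paper derives Theorem~\ref{prop:specD} as a consequence of Proposition~\ref{prop:eigs-poles}: it builds the resolvent kernel by variation of parameters from the decaying half-line solutions $\eta_\pm$, and identifies the gap eigenvalue as the unique zero of the Wronskian $\varphi(\omega,\tau)=\det X(0;\omega,\tau)$; your ``matching $v_-=c\,v_+$ at $x=0$'' is exactly the condition $\varphi(\omega,\tau)=0$, and your essential-spectrum argument already defers to that same resolvent construction for the reverse inclusion.
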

That $\omega_{\tau}$ traverses the spectral gap as $\tau$ varies from $0$ to $2\pi$  has an underlying topological explanation, which we discuss in 
Section \ref{sec:top}.

 For $\tau =0, 2\pi$, the Hamiltonian $\D(\tau)$ is a translation invariant (constant coefficient) differential operator. Each component of $\alpha$ satisfies a Klein-Gordon equation: $\partial_t^2 U(x,t)= (\partial_x^2-1) U(x,t)$. Hence, for $\tau =0,2\pi$, all sufficiently smooth and spatially localized initial conditions disperse to zero (spread and decay)  as time advances under the evolution \eqref{eq:Diraceq} \cite{brenner1985scattering}.
 
 In contrast, by Theorem \ref{prop:specD}, if $\tau \in (0,2\pi)$, one expects dispersive time-decay only after projecting onto the continuous spectral subspace  of  $\D(\tau)$.  We denote this projection by $P_{ac}(\D(\tau))$. By Theorem \ref{prop:specD} 
 \[ P_{ac}(\D(\tau))= I - \langle\psi_{\tau},\cdot\rangle\ \psi_{\tau} .\]

 \noindent
Our main result is the following time-decay estimate, with a $\tau-$ dependent rate of decay.

\begin{theorem}\label{maintheorem}
Let $\tau\in [0,2\pi]$ and $\D(\tau)$ be as defined in (\ref{eq:Dtau}). For any $\epsilon >0$, there exists $C_\epsilon>0$, which is independent of $\tau$, such that for all  we have
\begin{equation}\label{eq:decayest1}
    \|\langle x\rangle^{-2}e^{-i\D(\tau)t}P_{ac}(\D(\tau))\langle \D(\tau)\rangle^{-3/2-\epsilon}\langle x\rangle^{-2}\|_{L^1\to L^{\infty}}\leq C_\epsilon\frac{1}{\langle t\rangle^{1/2}}\frac{1}{1+\sin^2(\tau/2) t} .
\end{equation}
\end{theorem}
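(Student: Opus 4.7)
The approach combines a resolvent-based spectral representation with low- and high-energy oscillatory integral estimates. First, by Stone's formula,
$$e^{-i\D(\tau)t}P_{ac}(\D(\tau)) \;=\; \frac{1}{2\pi i}\int_{|\omega|\geq 1} e^{-i\omega t}\bigl[R_\tau(\omega+i0)-R_\tau(\omega-i0)\bigr]\,d\omega,$$
with $R_\tau(\omega)=(\D(\tau)-\omega)^{-1}$. Because $\D(\tau)$ is piecewise constant in $x$, the resolvent admits an essentially closed-form integral kernel: on each half-line the homogeneous equation $(\D_\pm(\tau)-\omega)\psi=0$ has pure exponential solutions $e^{\pm i\lambda(\omega)x}v_\pm^\tau(\omega)$ with $\lambda(\omega)=\sqrt{\omega^2-1}$, and the Green's function is built by selecting the solutions decaying at $\pm\infty$ (for $\im\lambda>0$) and matching them at $x=0$, yielding $R_\tau(\omega;x,y)=N(\omega,\tau;x,y)/W(\omega,\tau)$ with explicit numerator $N$ and matching-determinant $W$. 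A direct calculation reveals that $W(\omega_\tau,\tau)=0$ (consistent with the bound state $\omega_\tau=\cos(\tau/2)$) and that near the thresholds
$$W(\omega,\tau) \;=\; c_0\sin(\tau/2)+ic_1\lambda(\omega)+O(\lambda^2),$$
so $W(\pm 1,\tau)$ vanishes exactly at the resonant values $\tau\in\{0,2\pi\}$: the threshold resonance is encoded in a single explicit factor.

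Next, I would insert a smooth cutoff and split the spectral integral into a high-energy part ($|\lambda|\geq 1$) and a low-energy part ($|\lambda|\leq 1$). The high-energy contribution is handled by a standard non-stationary-phase argument: the weight $\langle\D(\tau)\rangle^{-3/2-\epsilon}$ provides symbol decay as $|\omega|\to\infty$, the outer $\langle x\rangle^{-2}$ weights provide smoothness of the spectral density in $\lambda$, and two integrations by parts against the phase (based on $\partial_\lambda\omega = \lambda/\omega$) yield a $\tau$-uniform bound of order $\langle t\rangle^{-3/2}$. For the low-energy piece, the boundary-value difference has the schematic form
$$R_\tau(\omega+i0)-R_\tau(\omega-i0) \;\sim\; \frac{2ic_1\lambda\,g_\tau(\lambda;x,y)}{c_0^2\sin^2(\tau/2)+c_1^2\lambda^2+O(\lambda^3)},$$
with $g_\tau$ smooth and bounded uniformly in $\tau$. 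After the change of variable $\omega=\pm\sqrt{1+\lambda^2}$ (spectral measure $\lambda\,d\lambda/\omega$), the low-energy contribution reduces to oscillatory integrals of the form
$$I(t,\tau;x,y) \;=\; \int_{0}^{1} e^{-it\sqrt{1+\lambda^2}}\,\frac{\lambda^2\,\tilde g_\tau(\lambda;x,y)}{\sin^2(\tau/2)+c\lambda^2}\,d\lambda.$$

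The heart of the argument is to bound $I(t,\tau;x,y)$ uniformly in $\tau$ and in $(x,y)$. When $\tau$ is bounded away from $\{0,2\pi\}$, the denominator is bounded below by $\sin^2(\tau/2)$, the integrand is smooth and compactly supported in $\lambda$, and repeated integrations by parts against the phase yield a bound of order $\langle t\rangle^{-3/2}/\sin^2(\tau/2)$, which is majorized by the desired $\langle t\rangle^{-1/2}(1+\sin^2(\tau/2)t)^{-1}$. In the resonant regime $\sin(\tau/2)\to 0$, one splits the $\lambda$-integral at the natural scale $|\lambda|\sim\sin(\tau/2)$: on $|\lambda|\lesssim\sin(\tau/2)$ the integrand is pointwise bounded by a constant and one uses the usual Klein-Gordon stationary-phase rate $\langle t\rangle^{-1/2}$; on $|\lambda|\gtrsim\sin(\tau/2)$ one rescales $\lambda=\sin(\tau/2)s$ to reveal an oscillatory integral with effective time variable $t\sin^2(\tau/2)$, whose rate interpolates correctly. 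The \emph{main obstacle} is executing this split so that the resonant and non-resonant regimes glue together into the single interpolating bound \eqref{eq:decayest1} rather than giving two separate estimates with a mismatch at intermediate scales; this demands careful tracking of the cancellations between the $\sin(\tau/2)$-singularity in $W^{-1}$, the vanishing of the $\lambda\,d\lambda$ spectral measure, and the spatial regularity supplied by the $\langle x\rangle^{-2}$ weights, so that the explicit structure of $W$ controls the transition precisely as stated.
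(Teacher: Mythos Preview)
Your overall architecture matches the paper's: Stone's formula, explicit resolvent kernel via Jost solutions matched at $x=0$, the Wronskian $\varphi(k,\tau)$ whose threshold behavior encodes the resonance, the change of variables $\omega=\sqrt{1+k^2}$, and a high/low energy split. The identification of $|T(k,\tau)|^2=k^2/(k^2+\sin^2(\tau/2))$ as the governing factor is exactly right. The high-energy part in the paper is done via a dyadic decomposition together with Van der Corput's lemma (one integration by parts plus a stationary-phase bound), which is close in spirit to your two integrations by parts.

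Where you diverge is in the low-energy endgame. You propose to split the $\lambda$-integral at the scale $|\lambda|\sim\sin(\tau/2)$ and rescale to expose an effective time $t\sin^2(\tau/2)$. The paper does something simpler and more robust: it proves \emph{two separate global bounds} on the low-energy piece, a $\tau$-uniform $|t|^{-1/2}$ bound and a $\tau$-dependent $|t|^{-3/2}/\sin^2(\tau/2)$ bound (each obtained by splitting at the time-dependent scale $k\sim t^{-1/2}$, not at $\sin(\tau/2)$), and then combines them via the elementary inequality $a\le 2/(b^{-1}+c^{-1})$ to obtain exactly $|t|^{-1/2}(1+\sin^2(\tau/2)t)^{-1}$. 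A further interpolation with the trivial $t=0$ bound removes the singularity at the origin. This sidesteps precisely the ``gluing'' obstacle you flag as the main difficulty.

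One concrete gap: your claim that the bound $\langle t\rangle^{-3/2}/\sin^2(\tau/2)$ is ``majorized by'' the target $\langle t\rangle^{-1/2}(1+\sin^2(\tau/2)t)^{-1}$ is false for $t\lesssim \sin^{-2}(\tau/2)$; the ratio is $1+(t\sin^2(\tau/2))^{-1}$, which blows up. So the non-resonant $t^{-3/2}$ estimate alone never yields the stated bound---you must pair it with the uniform $t^{-1/2}$ estimate, and the interpolation step is not optional. Your rescaling idea may well be made to work, but the paper's two-bounds-plus-harmonic-mean device is both cleaner and avoids any delicate matching.
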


From the bound \eqref{eq:decayest1} we note the non-uniformity of the time scale on which $t^{-\frac32}$ occurs  for $\tau$ or $2\pi-\tau\ll1\ ({\rm mod}\ 2\pi)$; indeed the $t^{-3/2}$ rate of decay only becomes visible on a time scale $t\gg \tau^{-2}$. Furthermore, while there is discontinuity in the $t\to \infty$ decay rate at $\tau=0$, for a fixed time \eqref{eq:decayest1} shows that the decay rate is continuous with respect to $\tau$ even at $\tau=0$.

A dispersive decay bound which displays a similar transition was obtained for the time-dependent Schr\"odinger equation with highly oscillatory potential in \cite{OscillatoryPotentials}.

\begin{remark}[Smoothness, spatial localization
 and uniformity of \eqref{eq:decayest1} in $\tau$]
  The decay bound \eqref{eq:decayest1} can be written equivalently as 
  \begin{equation}\label{eq:decayest2}
    \|\langle x\rangle^{-2}e^{-i\D(\tau)t}P_{ac}(\D(\tau))\alpha_0\|_{L^\infty(\R)}\leq C_\epsilon\frac{1}{\langle t\rangle^{1/2}}\frac{1}{1+\sin^2(\tau/2) t}
    \|\langle x\rangle^{2} \langle \D(\tau)\rangle^{3/2+\epsilon}\alpha_0\|_{L^1(\R)} \, .
\end{equation}
 This time-decay bound requires both smoothness and spatial localization of the initial condition. The smoothness required on the right hand side of \cref{eq:decayest2} arises from high-energy considerations; for the high-energy part of the data, the dynamics are comparable to those of the Klein-Gordon equation, whose dispersion relation $\omega(k)=\sqrt{1+k^2}$ has a dispersion rate, $\omega''(k)$, which tends to zero as~$k \to \infty$.\footnote{For the Klein-Gordon equation $k$ is simply the Fourier variable.} Smoothing provides an effective truncation of the spectral support of the data~$\alpha_0$, leading to an effective lower bound on the dispersion rate.

  Further, the  localization in $x$  arises from seeking a upper bound for the time-decay rate which holds {\em uniformly} in $\tau\in[0,2\pi]$, i.e., the constant $C_\epsilon$ in \eqref{eq:decayest1} should be independent of $\tau$. This boils down to an analysis of the behavior of the resolvent of $\D(\tau)$ for energies near the endpoints (thresholds) of the essential spectrum, for which we require spatial weights which are at least quadratic in~$x$. It is an open problem whether the spatial weights can be reduced to $\langle x \rangle^{-1}$.
\end{remark}

\begin{remark}[Threshold resonance phenomena]\label{rem:thresh}

 While our initial consideration of $\D (\tau)$ was motivated by its role in applications to periodic structures with dislocations (see Section~\ref{sec:motivation} below), Theorem \ref{maintheorem} also turns out to be a useful and simple paradigm for illustrating dispersive wave phenomena: The transition in decay rate in the estimate \eqref{eq:decayest1} reflects the appearance of a {\it threshold resonance} just as the eigenvalue $\omega_{\tau}$, when $\tau\equiv 0\ ({\rm mod} 2\pi)$, reaches the endpoint of the essential spectrum. The threshold resonance phenomenon, associated with the emergence of point spectrum from (or disappearance into) the continuous spectrum, is well-known to impact the rate of local energy decay in wave equations. This is best known in the context of the Schr\"odinger operator with a spatially localized potential; see, for example, \cite{jensen1979spectral, komech2010weighted, kopylova2014dispersion,SchlagSurvey}. 
The effect of threshold resonances for one-dimensional massive Dirac equations with a spatially localized potential is discussed in \cite{ERDOGAN1D}. Our family of Dirac operators is,  to the best our knowledge, the only example of Dirac operators where the transition in decay rate, due to a threshold resonance, is explicitly displayed. Such a transition in decay rates  has displayed as well for a class of Schr\"odinger Hamiltonians with rapidly oscillatory potentials in \cite{OscillatoryPotentials}. 
\end{remark}

\begin{remark}[Relation to previous time-decay results for  Dirac Hamiltonians]

Time-decay estimates for  massive Dirac operators for classes of potentials, which decay to zero as $x\to\pm\infty$, were obtained by Erdogan and Green one space dimension in  \cite{ERDOGAN1D}. For results in two and three space dimensions, see e.g., \cite{d2005decay, burak2019limiting, erdougan2021massless, erdougan2019dispersive, erdougan2018dispersive, ERDOGAN2DMassive, kovavrik2022spectral}. Pelinovsky and Stefanov derived estimates for the linearized evolution about standing waves of a one-dimensional semilinear Dirac equation in \cite{pelinovsky2012asymptotic}.

In this remark, we make two contrasts of the results and methods of this article with 
 those in~\cite{ERDOGAN1D}. First, the operators we study, $\D(\tau)$, are not spatially localized perturbations of a constant coefficient operator.
Second, our strategy of proof is different in several ways (e.g., in developing the scattering theory of $\D (\tau)$), see Section \ref{sec:strategy}.
\end{remark}

The following theorem states alternative variations of time-decay upper bounds, which hold
for various relaxations of the assumptions on spatial localization of the initial conditions.
\begin{theorem}\label{notmaintheorem}
\begin{enumerate}
    \item For any $\tau \in [0,2\pi]$ and $\varepsilon>0$, there exists a constant, $C_{\varepsilon} (\tau)>0$, such that
    \begin{equation}\label{eq:noweights} \|e^{-i\D(\tau)t}P_{ac}(\D(\tau))\langle \D(\tau)\rangle^{-3/2-\eps}\|_{L^1\to L^{\infty}} \leq C_{\varepsilon}(\tau)\frac{1}{\langle t\rangle^{1/2}} \, .
    \end{equation}
    \item Let $K\subset(0,2\pi)$ be a compact set and fix $\varepsilon>0$. Then there exists a constant $C_1(K)>0$ such that for any $\tau\in K$ and $t>0$ we have
\begin{equation}\label{eq:betterWeight}
    \|\langle x\rangle^{-1}e^{-i\D(\tau)t}P_{ac}(\D(\tau))\langle \D(\tau)\rangle^{-3/2-\eps}\langle x\rangle^{-1}\|_{L^1\to L^{\infty}} \leq C_1(K)\frac{1}{\langle t\rangle^{3/2}} \, , 
\end{equation}
and $C_1 (K_n)\to \infty$ for every sequence of compact sets such that $K_n \to (0,2\pi)$.
\end{enumerate}
\end{theorem}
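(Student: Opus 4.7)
The plan is to derive both parts from the resolvent framework underlying Theorem \ref{maintheorem}, by applying Stone's formula and stationary-phase analysis to the essential spectrum, but re-tuning the trade-off between spatial weights, smoothness, and uniformity in $\tau$. Concretely, write
$$e^{-it\D(\tau)} P_{ac}(\D(\tau)) = \frac{1}{2\pi i}\int_{\sigma_{\rm ess}(\D(\tau))} e^{-it\lambda}\bigl[\Reso(\lambda+i0)-\Reso(\lambda-i0)\bigr]\,d\lambda,$$
parametrize the essential spectrum by $\lambda(k) = \pm\sqrt{1+k^2}$, and represent the integrand kernel in terms of the Jost-type generalized eigenfunctions of $\D(\tau)$ whose construction must be carried out in the proof of Theorem \ref{maintheorem}. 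The resulting oscillatory integral $\int e^{-it\lambda(k)} K_\tau(k;x,y)\,dk$ has a critical point at $k=0$, corresponding to the thresholds $\lambda=\pm 1$, which generates the baseline $t^{-1/2}$ decay.

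For Part (1), fix $\tau\in[0,2\pi]$. For each such $\tau$ the Jost solutions are globally bounded in $x$ with a $\tau$-dependent constant (degenerating as $\tau\to 0,2\pi$ due to the emerging threshold resonance), so $\sup_{x,y}|K_\tau(k;x,y)|$ is finite. The smoothing factor $\langle\D(\tau)\rangle^{-3/2-\eps}$ contributes $\langle\lambda(k)\rangle^{-3/2-\eps}$ to the amplitude, which ensures integrability at $|k|\to\infty$ and absorbs the $k$-growth from differentiating Jost solutions; applying van der Corput (or standard one-dimensional stationary phase) at $k=0$ then yields \eqref{eq:noweights}, with a constant that is $\tau$-dependent precisely because the $L^\infty$ bound on the Jost solutions lacks uniformity as $\omega_\tau$ approaches $\pm 1$. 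At $\tau=0,2\pi$ this specializes to the standard Klein--Gordon decay estimate for each component of $\alpha$.

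For Part (2), fix $K\subset(0,2\pi)$ compact, so that $\omega_\tau=\cos(\tau/2)$ is bounded away from $\pm 1$ uniformly in $\tau\in K$. In this non-resonant regime the Wronskian of the Jost solutions at threshold is uniformly bounded away from zero, and the resolvent kernel admits a smooth (in $k$) expansion at $k=0$ on weighted $L^2$-spaces with only single-power weights. The leading behavior of the spectral measure then has a vanishing factor $k$ from the Jacobian $d\lambda/dk = k/\sqrt{1+k^2}$, which enables one integration by parts against the oscillatory phase: the identity $\partial_k e^{-it\lambda(k)} = -it\lambda'(k) e^{-it\lambda(k)}$ combined with $\lambda'(k)=k/\sqrt{1+k^2}$ transfers the $k$-factor onto the phase and yields an extra $t^{-1}$ gain. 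Each $k$-derivative of a Jost solution produces at most one linear factor in $x$ (or $y$), which is absorbed by the weights $\langle x\rangle^{-1}$ appearing on each side of the operator in \eqref{eq:betterWeight}. Combined with the boundary-term $t^{-1/2}$ from stationary phase at $k=0$, this yields the $t^{-3/2}$ rate uniformly in $\tau\in K$. The lower bound $C_1(K_n)\to\infty$ for $K_n\to(0,2\pi)$ is obtained by constructing an explicit sequence of data: one takes $\alpha_{0,n}$ to be a suitable mollification of $\psi_{\tau_n}$ for $\tau_n\to 0$ and observes that the near-threshold contribution to the continuous spectral projection has mass that is not uniformly integrable against only $\langle x\rangle^{-1}$ weights, forcing the constant in \eqref{eq:betterWeight} to diverge.

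The main obstacle is the uniform control required in Part (2): establishing that, for $\tau\in K$, the Wronskian $W_\tau(\lambda)$ of the Jost solutions is quantitatively non-degenerate at $\lambda=\pm 1$ with lower bound depending only on $\mathrm{dist}(K,\{0,2\pi\})$, and that the leading smooth part of the threshold amplitude has $k$-derivatives growing at most linearly in the spatial variables. This is the non-resonant threshold analysis specific to the dislocation operator $\D(\tau)$, and plays the role analogous to the non-resonant condition in \cite{ERDOGAN1D} for short-range Dirac potentials, but must be established directly here since $\D(\tau)$ is not a relatively compact perturbation of $\D(0)$.
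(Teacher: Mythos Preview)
Your approach is essentially the paper's own: Stone's formula, the Jost-solution representation of the spectral measure, and a van der Corput bound for Part (1), with one preliminary integration by parts for Part (2). The paper's actual proof (Section \ref{sec:notmaintheorem}) is extremely terse: it notes that the high-energy piece is already handled uniformly in $\tau$ by Theorem \ref{thm:high_energy}, and then simply invokes the second and third bounds of Lemma \ref{lemma:vandercorput} on the low-energy integral \eqref{eq:alphal_TF} for Parts (1) and (2), respectively. Your more conceptual account of the mechanism is a reasonable expansion of that.

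Two points of imprecision. First, you locate the $\tau$-dependence of $C_\varepsilon(\tau)$ in Part (1) in a loss of uniform $L^\infty$-control on the Jost solutions. In the paper's explicit framework the Jost solutions take the form $e^{\pm ikx}$ times vectors that are bounded in $x$; what actually degenerates near $\tau=0,2\pi$ is the $k$-regularity of the transmission coefficient, encoded by $\partial_k|T(k,\tau)|^2$ in Proposition \ref{lem:TransmissionBounds} (equivalently, the Wronskian $\varphi(k,\tau)$ losing its nonvanishing at $k=0$). For Part (2), your ``Wronskian non-degenerate at threshold'' condition is exactly $\varphi(0,\tau)=1-e^{-i\tau}\neq 0$, which in the paper manifests as $|T(k,\tau)|^2=O(k^2)$ near $k=0$; it is this vanishing, not merely the Jacobian factor $k$ you cite, that kills the boundary term in the integration by parts (see the proof of Proposition \ref{lem:t32_loweng}).

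Second, the paper does not supply an argument for $C_1(K_n)\to\infty$, and your proposed construction via mollifications of $\psi_{\tau_n}$ needs repair: since $\psi_{\tau_n}$ is the bound state, $P_{ac}(\D(\tau_n))\psi_{\tau_n}=0$, so data built directly from it carry no continuous-spectral mass. A workable route is instead to test the evolution against low-energy wave packets concentrated near the threshold $k=0$ and track the contribution of $|T(k,\tau)|^2\sim k^2/\sin^2(\tau/2)$ as $\tau\to 0$.
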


\begin{remark}[Generalizations of Theorems \ref{maintheorem} and \ref{notmaintheorem}]
 \label{sec:generalize}
It is natural to consider extending Theorems \ref{maintheorem} and \ref{notmaintheorem}  to the case where $\D(\tau)$ in \eqref{eq:Dtau} is replaced by an operator which continuously interpolates between the asymptotic operators  $\D_-$ and $\D_+$. In this general setting we expect $\D(\tau)$ to have, for each fixed $\tau$, a finite number of eigenvalues in the spectral gap and also possible threshold resonance energies
at the edge of the essential spectrum. Analogous decay estimates to those presented in Theorems \ref{maintheorem} and \ref{notmaintheorem} would hold with (a)  $P_{ac}(\D(\tau))$ defined as the projection orthogonal to all bound states corresponding to the energies in the spectral gap and (b) decay rates adjusted to transition from 
 the faster $\langle t\rangle^{-\frac32}$ rate, for typical values of $\tau$, to the slower $\langle t\rangle^{-\frac12}$ rate arising from those values of $\tau$ for which a threshold resonance occurs. See the further discussion in Section \ref{sec:open} below. 
\end{remark}
\begin{remark}In the resonant case $\tau=0,2\pi$, which is in fact the standard massive Dirac equation, a $t^{-3/2}$ decay rate holds true for a variation of \eqref{eq:betterWeight}. Essentially, if one subtracts a finite-rank operator from the evolution equation, one can counteract the effect of resonance and obtain a non-resonant type order of decay. For details, see \cite[Theorem 1.2]{ERDOGAN1D}. 
\end{remark}

\subsection{Motivation;  topologically protected states and radiation damping for Floquet systems}\label{sec:motivation}
\subsubsection{Topologically protected states in dislocated systems}\label{sec:top}
Since the experimental observation of the integer quantum Hall effect \cite{cage2012quantum, klitzing1980new}, and its subsequent explanation using the topological phases of matter \cite{thouless1982quantized},
there has been a great deal of fundamental and applied interest in  wave systems having properties which are ``topologically protected'', i.e., systems in which energy transport remains unchanged under continuous localized deformations, due to a topological invariant; see, for example, the reviews \cite{hasan2010colloquium, ozawa2019topological, xue2022topological} and references therein.

The Dirac operators which this article studies, $\{\D(\tau)\}_{\tau \in \R/2\pi\Z}$, play a central role in a class of one-dimensional PDE models of physical media which exhibit topological properties. We briefly describe these models, and refer to \cite{drouot2021bulk, drouot2020defect, fefferman2017topologically, fefferman2014topologically} for details. For a photonic realization of such systems, see \cite{photonic-PRA-2016}. Consider a Schr{\"o}dinger operator, $H^{(0)}$, with a one-periodic potential, $V(x)$. Discrete translation-invariance of $H^{(0)}$ implies a {\it band structure}; its spectrum is characterized by a sequence of {\it eigenvalue dispersion curves}: $k\in[0,2\pi]\mapsto E(k)$ (Floquet-Bloch band functions).
 A scenario of great is interest is when 
two such curves meet at a pair $(k_D,E_D)$ and form a linear crossing, a {\it Dirac point} \cite{fefferman2017topologically}. In this case, $E_D$ is a two-fold degenerate eigenvalue of $H^{(0)}$. The effective (or homogenized) Hamiltonian which describes a blowup of the band structure near such linear crossings is $H^{(0)}_{\rm eff}=i\sigma_3\partial_X$, a massless Dirac Hamiltonian. 
 
Next, we introduce a {\it dislocation},
a perturbing potential which is also asymptotically one-periodic but, as $x\to+\infty$ is phase-shifted by an amount $\tau/2\pi$ from its behavior as $x\to~-\infty$. The parameter   $\tau\in[0,2\pi]$ measures the amount of dislocation. The corresponding Schr{\"o}dinger operator, $H^{(\tau)}$, has a gap in its essential spectrum about energy $E_D$ (where two bands of spectra of $H^0$ cross linearly). Typically, $H^{(\tau)}$ will have a finite number of eigenvalues, $E^{(\tau)}_j$, which ``flow'' within the spectral gap as $\tau$ varies. Figure \ref{fig:SF} displays several examples where eigenvalues traverse the gap 
 as $\tau$ varies from $0$ to $2\pi$.

The  {\it spectral flow} of the family of operators $\{H^{(\tau)}\}_{\tau\in S^1}$ is an integer which  measures the net number of edge state curves which traverse the spectral gap. The spectral flow is a topological invariant; it does not change under continuous deformations of the Hamiltonian under which the gap remains open \cite{waterstraat2016fredholm}. In \cite{drouot2021bulk}, $H^{(\tau)}$ is embedded in a continuously varying family of Hamiltonians  
 $ s\in(0,1]\mapsto H^{(\tau,s)}$, where $H^{(\tau,1)}=H^{(\tau)}$.
In the asymptotic regime, $s \ll 1$, the operators $H^{(\tau , s)}$ encode a small and adiabatic dislocation. By a multiple-scales analysis \cite{drouot2020defect, fefferman2014topologically,fefferman2017topologically}, the study of the spectral flow of  $\{H^{(\tau , s)}\}_{\tau\in S^1}$ can be reduced to that of Dirac operators of the form:
 \begin{align}\label{eq:Dtau-m}
 \D(m\tau) &\equiv i\sigma_3\partial_X  + \sigma_1 \mathbbm{1}_{(-\infty,0)}(X)  + \begin{pmatrix}
    0 & e^{-im\tau}\\ e^{im\tau} & 0
    \end{pmatrix}\ \mathbbm{1}_{[0,\infty)}(X)\ .
\end{align}
The integer $m$ denotes the winding number about zero of a complex-valued  ``effective mass'' parameter which emerges from the multiple scale analysis.  By Theorem \ref{prop:specD}, the spectral flow of the family $\{\D(m\tau)\}_{\tau\in \R/(2\pi/m)}$ is equal to $1$ and hence the spectral flow of the family $\{\D(m\tau)\}_{\tau\in \R/2\pi}$ is equal to $m$. We conclude that, provided that a gap remains open throughout the deformation, the net number of 
 eigenvalue curves of $H^{(\tau)}$ which traverse the spectral gap is equal to $m$ (see Figure \ref{fig:SF}).  Each operator family $\{\D(m\tau)\}_{\tau\in S^1}$ is a canonical operator (or normal form) to which all operator families $\{H^{(\tau)}\}_{\tau\in S^1}$ in the same topological class can be deformed; see right panel of Figure \ref{fig:SF}.

Finally,  Drouot proves in \cite{drouot2021bulk} that the spectral flow, $m$, can also be identified as the first Chern number, $\mathcal{C}_1(\mathcal E)$, associated with a vector bundle over $\mathbb{T}^2_{k,\tau}$, whose fibers are the $k-$pseudoperiodic eigenspaces 
of the periodic Schr\"odinger operator at infinity $H^{(\tau)}_+$ associated with all bands below the energy $E_D$, which lies in a gap. This equality of bulk and edge indices is a variant of the {\it bulk-edge correspondence} principle. 

\begin{figure}[h]
 \begin{subfigure}[t]{0.3\textwidth}
            \includegraphics[width=\linewidth]{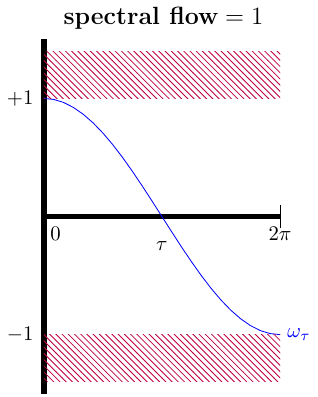}
                    \end{subfigure}
        \begin{subfigure}[t]{0.3\textwidth}
            \includegraphics[width=\linewidth]{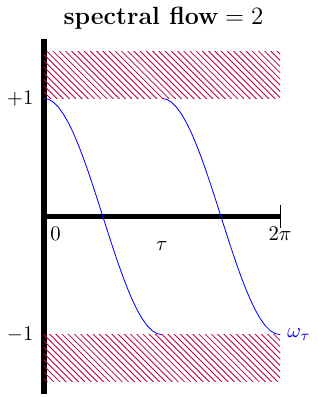}
                    \end{subfigure}
        \begin{subfigure}[t]{0.3\textwidth}
            \includegraphics[width=\linewidth]{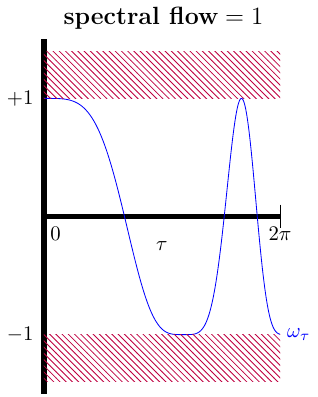}
                    \end{subfigure}
\caption{
Spectral flow illustrated (see Section \ref{sec:top}). All figures show energy (spectrum) as a function of the dislocation parameter $\tau$: continuous spectrum (red stripes) and curves of point spectra (solid blue). {\bf Left:} $\D (m\tau)$ with $m=1$, see \eqref{eq:Dtau-m}. {\bf Center:} same, with $m=2$. {\bf Right:} a continuous deformation of the $m=1$ case in which $\tau$ is replaced by a curve $\theta(\tau)$, which does not close the gap. The curve $\omega_{\tau}$, the energy of the eigenfunction of $\D (\theta(\tau))$, is non-monotonic and the spectral flow is $1$. }
\label{fig:SF}
\end{figure}

\subsubsection{Radiative decay and metastability of edge modes}
\label{sec:rad-damp}
In \cite{hameedi2022radiative},  
 motivated by modeling and experiments in photonic waveguides \cite{bellec2017non, jurgensen2021quantized}, 
  parametric forcing perturbations of the dynamics  $i\partial_t \alpha = \D (\pi)\alpha$ is studied (i.e., linear and weakly time-dependent perturbations of $\D (\pi)$). In particular, it is shown via an asymptotic expansion of solutions and detailed comparison with computer simulations, that the zero energy eigenstate of $\D (\pi)$ is only {\em meta-stable} under sufficiently rapid forcing; we demonstrate it decays exponentially in $t$ for  $0<t\lesssim  \beta^{-2}$, for $0<\beta\ll1$,   the amplitude  of the forcing.\footnote{It is conjectured that for times $t\gg\beta^{-2}$, the decay is at an algebraic rate. See analogous results concerning the Schr{\"o}dinger equation in \cite{soffer1998nonautonomous}.}

To establish a systematic multiple-scale expansion of the solution, we required a weak form of the local energy decay estimate of Theorem \ref{maintheorem}.
The  approach we present here implies such an estimate by a more direct and generalizable method than in \cite{hameedi2022radiative}. The latter involved first relating the problem to a Schr{\"o}dinger evolution, by squaring the Dirac Hamiltonian, and then  applying results on the boundedness of wave operators associated with Schr\"odinger equation \cite{weder2000inverse, yajima1995wk}.

\subsection{Open questions and future directions}\label{sec:open}
\begin{enumerate}
\item We believe that the time-decay estimates in Theorems \ref{maintheorem} and \ref{notmaintheorem} can be extended to the case where $\D(\tau)$ in \eqref{eq:Dtau} is replaced by a Dirac  operator with a domain wall, which continuously interpolates between the asymptotic operators  $\D_-$ and~$\D_+$ (as opposed to the discontinuity present in the definition of $\D (\tau)$, \eqref{eq:Dtau}); see also the discussion in Remark \ref{sec:generalize}.
One approach to such a result would be to  modify the present analysis by working with the {\it distorted plane waves}  derivable from appropriate linear integral equations; in the present work we used explicit expressions for distorted plane waves, obtainable because $\D(\tau)$ has piecewise constant coefficients. Alternatively, if $\tilde{\D}(\tau)$ is a Dirac operator with continuously varying coefficients, and such that $\tilde{\D}(\tau)\to\D(\tau) $ sufficiently rapidly as $x\to\pm\infty$, then 
 $\tilde{\D}(\tau)$ is a relatively compact perturbation of $\D(\tau)$, and one can seek to derive time-decay estimates for 
  $\exp(-it\tilde{\D}(\tau))P_{\rm ac}(\tilde{\D}(\tau))$ by a perturbation argument.

\item A fully rigorous justification of the expansion constructed in \cite{hameedi2022radiative}, which describes the radiation damping of the localized mode (see Section \ref{sec:rad-damp}), is  an interesting open problem. 
We believe that such a proof would require time-decay estimates for the {\em full  Floquet (time-periodic) Hamiltonian} with a domain wall, where the time-dependent forcing is not spatially localized. Such estimates present a challenging open analytical problem, and do not follow from known results on decay estimates for time-periodic Hamiltonians, which impose spatial localization conditions on the forcing \cite{beceanu2011new, galtbayar2004local, soffer2020p}.

\item  Section \ref{sec:top} outlines a context in which Dirac operators with domain wall / dislocation defects arise and their role in determining topological properties. 
In physical systems, such as in optics (e.g. \cite{photonic-PRA-2016}), nonlinear corrections to the underlying linear wave equations are needed to model effects at high intensities. For the systems described in Section \ref{sec:top}, the corresponding effective Dirac equations  have a nonlinearity, analogous to the Kerr (cubic) nonlinearity of the nonlinear Schr\"odinger equation. 
A natural agenda of analytical questions for the semi-linear Dirac models with domain walls ranges from low energy scattering for systems without a domain wall defect,  nonlinear scattering and asymptotic stability theory for nonlinear bound states which bifurcate from the linear spectrum of Dirac operators with domain walls, to radiation damping and metastability of states.   This agenda connects naturally with the extensive literature on these topics for nonlinear Schr\"odinger and nonlinear Klein Gordon type equations; ; see, for example, \cite{Weinstein2015,soffer-weinstein:99}.   Concerning the subtle effects on  large time weakly nonlinear dynamics, which are caused by threshold resonances, see for example
 \cite{chen-pusetari:22, schlag-luermann:23}. 
\end{enumerate}

\subsection{Notation, conventions, remarks}\label{sec:notation}
\begin{enumerate}
\item Pauli-type matrices:
\begin{equation}
  \sigma_1 = \begin{pmatrix}
    0 & 1 \\ 1 & 0
    \end{pmatrix}, \qquad \sigma_3 = \begin{pmatrix}1 & 0 \\ 0 & -1 \end{pmatrix}, \qquad 
\sigma_\star(\tau) = \begin{pmatrix}
    0 & e^{-i\tau} \\ e^{i\tau} & 0
    \end{pmatrix}.
\label{eq:Pauli}\end{equation}
\item For an operator $T:L^2(\R ;\C^2 )\to L^2(\R ;\C^2)$, which arises from an integral kernel,  we denote its kernel by $T(x,y)$, i.e., for
$f\in L^2 (\R ;\C^2)$,
$$(Tf)(x) = \int\limits_{\R} T(x,y)f(y) \, dy \, .$$
\item Let $a,b,c > 0 $ be such that $a\le b$ and $a\le c$. Then, 
\begin{align}
    a \leq \frac{2}{b^{-1}+c^{-1}}\ ,
\label{eq:abc-ineq}\end{align}
\item $\langle x \rangle = \sqrt{1+x^2}$
\item $\lambda (\omega)=\sqrt{1-\omega ^2}$ for all $\omega \in (-1,1)$, and as the unique analytic continuation of that root for all $\omega \in U$ where $U = \C \setminus \{ (-\infty ,-1] \cup [1, \infty) \}$ (see Lemma \ref{lem:lam-branch}).
\item The resolvent operator is defined by $\Reso (\omega;\tau)\equiv (\D (\tau)-\omega)^{-1} $ for every $\omega \in \C$, and for $k\in \R$ we define   $$\Resk_{\pm}(k,\tau) \equiv \Reso(\sqrt{1+k^2}\pm i0,\tau) \, .$$  
\end{enumerate}

\subsection{Structure of the article}\label{sec:outline} 
We first provide a rough outline of the proof in Section \ref{sec:strategy}. We then develop the spectral theory of the Dirac operators $\D (\tau)$: the resolvent kernel is constructed in closed form in Section \ref{sec:Resolvent1};  In Section \ref{sec:LAP}, we the limits of the resolvent kernel as the spectral parameter approaches the essential spectrum; In Section \ref{sec:scattering}, we solve the associated scattering problem and relate it to the spectral measure. Finally, the main result (Theorem \ref{maintheorem}), as well as its variation (Theorem \ref{notmaintheorem}), are proved in Section \ref{sec:mainproof}.
\subsection*{Acknowledgements} This research was supported in part by National Science Foundation grants 
DMS-1908657 and DMS-1937254 (MIW),  Simons Foundation Math + X Investigator Award \#376319 (MIW, JK, AS), the Binational Science Foundation Research Grant \#2022254 (MIW, AS), and the AMS-Simons Travel Grant (AS).
This research was initiated while JK was a postdoctoral fellow and AS was an associate research Scientist, both in the Department of Applied Physics and Applied
Mathematics at Columbia University.
\section{Strategy and overview of the proof of Theorem \ref{maintheorem} \label{sec:strategy}
}

For any $\alpha_0\in L^2(\R,\C)\bigcap L^1(\R,\C)$ data, by the $L^2(\R)-$ functional calculus and  Stone's formula for the spectral measure associated with $\D(\tau)$ \cite{hall2013quantum, kopylova2014dispersion, teschl2014mathematical}, we have that:
\begin{align*}
     e^{-i\D(\tau)t}P_{ac}(\D(\tau))&\langle \D(\tau)\rangle^{-3/2-\eps}\alpha_0 \\ 
     &=\frac{1}{2\pi i}\int_{\sigma_{ac}(\D(\tau))}e^{-i\omega t}\left[\Reso(\omega+i0,\tau)-\Reso(\omega-i0,\tau)\right]\alpha_0 \langle \omega\rangle^{-3/2-\eps}d\omega\ , \numberthis \label{eq:decayEst_stone}
\end{align*}
where $\Reso(\omega,\tau)$ is the resolvent
$$
    \Reso(\omega,\tau) = (\D(\tau)-\omega)^{-1} \, ,
$$
and where $\sigma_{\rm ac} = (-\infty,1] \cup [1,\infty)$ is the continuous spectrum of $\D (\tau)$. 
\begin{remark}
Since the continuous spectrum is made up of the two disjoint intervals, the integral~\eqref{eq:decayEst_stone} breaks up into two disjoint integration domains. Throughout the proof, we treat only the positive part of the spectrum; the arguments for the negative part follow analogously. In particular, treating $[1,\infty)$ independently allows us to use the change of variables \eqref{eq:cov}, which simplifies the analysis considerably. 

Alternatively, in \Cref{app:spectralsymmetry} we establish the invariance of the equation  $\D(\tau)\alpha = \omega \alpha $ under the transformation

$$\left(\omega, \, \tau,\,  \alpha (x) \right) \mapsto \left(-\omega, \, -\tau \, \, {\rm mod} (2\pi), \, \sigma_3S \alpha(-x) \right) \, .  $$ Hence, proving decay-rates for data in $P_{ac}\left(\D(\tau)\mathbbm{1}_{[1,\infty)}\right) $ for {\em } all $\tau \in [0,2\pi)$ immediately leads to the same estimates for  data in $P_{ac}\left(\D(\tau)\mathbbm{1}_{(-\infty, -1]}\right) $.
\end{remark}
Define $$P(\D(\tau)) \equiv P_{ac}\left(\D(\tau)\mathbbm{1}_{[1,\infty)}\right) \, ,$$ to be the projection onto the positive part of the continuous spectrum. Then replacing $P_{ac}$ in \eqref{eq:decayEst_stone} with $P$ yields
\begin{align*}
  \alpha(x,t)&=   e^{-i\D(\tau)t}P(\D(\tau)) \langle \D(\tau)\rangle^{-3/2-\eps}\alpha_0  \\
     &= \frac{1}{2\pi i}\int_1^\infty    e^{-i\omega t}\left[\Reso(\omega+i0,\tau)-\Reso(\omega-i0,\tau)\right]\alpha_0  \ \langle \omega\rangle^{-3/2-\eps} \,  d\omega \, . \numberthis \label{eq:stone_positive_w}
\end{align*}
Next we make the change of variables 

\begin{equation}\label{eq:cov} \omega \equiv \sqrt{1+k^2}
\end{equation}
and define
\begin{align*}
    \Resk_{\pm}(k,\tau) &\equiv \Reso(\sqrt{1+k^2}\pm i0,\tau) = (\D(\tau)-\sqrt{1+k^2}\pm i0)^{-1} \, .  
\end{align*}
Then, 
\begin{align}
 \alpha(x,t) &= \frac{1}{2\pi i}\int_{0}^{\infty} dk\ \langle k\rangle^{-3/2-\eps}\frac{k}{\sqrt{1+k^2}} e^{-i\sqrt{1+k^2} t}\left[\Resk_{+}(k,\tau)-\Resk_{-}(k,\tau)\right]\alpha_0  \, ,  \label{eq:stones}
\end{align}
This last representation of the dynamics is the main object of our analysis.
\begin{remark}\label{rem:kw}
When carefully substituting $\omega = \sqrt{1+k^2}$ into \eqref{eq:stone_positive_w}, one really gets
$$
    \langle \omega \rangle = \langle \sqrt{1+k^2}\rangle 
    =(1+(\sqrt{1+k^2})^2)^{1/2}
    =(2+k^2)^{1/2} \, ,$$
in \eqref{eq:stones}, instead of $\langle k \rangle$. However, we allow this small abuse of notations in \eqref{eq:stones} since the analysis is unchanged by this difference.
\end{remark}

The strategy for proving Theorem \ref{maintheorem} from here on is as follows:

\begin{itemize}
\item In Section \ref{sec:Resolvent1}
 we construct the resolvent kernel, $\Reso_{\pm}(\omega,\tau)(x,y)$ for spectral parameter, $\omega$, in the complement of $\sigma(\D(\tau))$,  by the variation of parameters method for ODEs (Proposition \ref{prop:resolvent}).
    \item Results in Sections \ref{sec:Resolvent1}, \ref{sec:LAP}, and \ref{sec:scattering} lead to explicit expression for the solution $\alpha(t,x)$ in \eqref{eq:stones} as an oscillatory 
     integral.
    \item The oscillatory integral is divided into a high-energy and low-energy components (see \eqref{eq:highlow}).
    \item The high energy component is bounded from above uniformly in $\tau$ using a stationary phase argument (Lemma \ref{lemma:vandercorput}).
    \item The low energy component is bounded from above in two steps: first a uniform $t^{-1/2}$ bound is proved (Proposition \ref{lem:t12_loweng}). Then a more careful analysis of the transmition coefficient leads to a $t^{-3/2}$ decay rate for non-resonant $\tau$.
\end{itemize}

\section{The resolvent operator $\Reso(\omega;\tau)$}\label{sec:Resolvent1}
In this section we construct the resolvent, $\Reso(\omega,\tau)=(\D(\tau)-\omega)^{-1}$, by providing an explicit, closed-form expression for its kernel. For any $\omega \in \C$, the resolvent operator is defined in such a way that if $\Reso(\omega;\tau)f = \beta$ for $\beta,f\in L^2(\R;\C^2)$, then
\begin{equation}\label{eq:res_ode}
  \left( \D (\tau)-\omega\right)\beta = f \,  . 
\end{equation}
Hence, the resolvent acting on $f$ returns an $L^2$ solution to the inhomogeneous ODE forced by $f$. Our first goal is to solve this ODE for $\omega$ {\em outside} of the spectrum i.e., $\omega\in U$ where
\begin{equation}\label{eq:Udef}
U\equiv\C\setminus \left\{(-\infty,-1]\cup[1,\infty)\right\} \,  .
\end{equation}
To solve the ODE \eqref{eq:res_ode} in $L^2(\R ;\C^2)$  (and hence to construct the resolvent kernel), we will use the method of variation of parameters: in Section \ref{sec:homogeneous} we first solve the corresponding homogeneous ODE $(\D (\tau)-\omega)\jostw = 0$ to find asympstotically decaying solutions, known as the {\em Jost solution} and are denoted by $\jostw_{\pm} (x;\omega, \tau)$. Afterwards, in Section \ref{sec:resolventKernel}, we use the Jost solutions to construct the kernel, see Proposition \ref{prop:resolvent}.

\subsection{The homogeneous equation}\label{sec:homogeneous}
 
We begin by considering general solutions of the  homogeneous ordinary differential equation
\begin{equation*}
    (\D(\tau) - \omega)\beta = 0 .
\end{equation*}
This may be written as:
\begin{equation}\label{eq:beta}
    \partial _x \beta (x) = M(x;\omega,\tau)\beta(x)
    \end{equation}
    where 
    \begin{equation}\label{eq:Mxktau}
  M(x;\omega,\tau)\equiv \begin{cases} M_{+}(\omega,\tau) \, , & x>0 \, , \\
     M_-(\omega,\tau) \, , & x<0 \, ,
     \end{cases}
\end{equation}
and 
\begin{equation}\label{eq:Mpm_def}
     M_+ (\omega,\tau) \equiv \begin{pmatrix}-i\omega & ie^{-i\tau}\\ -i e^{i\tau} & i\omega
    \end{pmatrix} \, , \qquad M_- (\omega,\tau) \equiv \begin{pmatrix}
    -i\omega & i \\ -i & i\omega
    \end{pmatrix} \, .
\end{equation}
We observe that
\begin{align*}
    \det\left(M_{\pm}(\omega,\tau)\right) = \omega^2 - 1\ \quad \textrm{and}\ \quad  
    \tr\left(M_{\pm}(\omega,\tau) \right) = 0 \ \, .
\end{align*}
Hence, for a fixed $\omega$ the eigenvalues of $M_+$ and $M_-$ are solutions, $\lambda$, of 
\begin{align}\label{eq:poly}
    \lambda^2 = 1-\omega^2.
\end{align}
The following lemma ensures that, on an appropriate domain, $U$, we may choose a  solution of \eqref{eq:poly}, $\lo$, with   strictly positive real part.
\begin{lemma}\label{lem:lam-branch}
There is a holomorphic function $\omega\mapsto\lambda(\omega)$, defined on the domain  $U$ (see \eqref{eq:Udef}),
such that for all $\omega\in U$
\begin{align*}
    \lambda^2(\omega)=1-\omega^2\ \textrm{and}\ \re(\lo) > 0.
\end{align*}
Furthermore, $\overline{\lo}=\lambda(\overline\omega)$
 for all $\omega\in U$.
\end{lemma}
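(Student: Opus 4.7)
The plan is to obtain $\lambda(\omega)$ by composing the polynomial $\omega\mapsto 1-\omega^2$ with the principal branch of the square root. The principal square root is holomorphic on $\C\setminus(-\infty,0]$, satisfies $\re\sqrt{z}>0$ throughout its domain, and obeys $\overline{\sqrt{z}}=\sqrt{\bar z}$. Hence, once we show that $\omega\in U$ forces $1-\omega^2\notin(-\infty,0]$, the function $\lambda(\omega)\equiv\sqrt{1-\omega^2}$ defined via this principal branch will inherit all the required properties automatically.

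The one substantive step is therefore the set-theoretic claim:
\[
\omega\in U \ \Longrightarrow\ 1-\omega^2\in \C\setminus(-\infty,0].
\]
Equivalently, I would show that $\omega^2\in[1,\infty)$ implies $\omega\in(-\infty,-1]\cup[1,\infty)$. Writing $\omega=a+ib$ gives $\omega^2=(a^2-b^2)+2iab$; requiring $\omega^2\in\R$ forces $ab=0$, and requiring $\omega^2\ge 1$ rules out the purely imaginary case $a=0$ (since then $\omega^2=-b^2\le0$). The surviving case $b=0$, $a^2\ge1$, means exactly $\omega\in(-\infty,-1]\cup[1,\infty)$, which is the complement of $U$. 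This gives the desired inclusion. Note also that $1-\omega^2=0$ only for $\omega=\pm1$, both of which lie outside $U$, so the image of $U$ never hits the branch point of the square root.

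With this inclusion established, the definition $\lambda(\omega)\equiv\sqrt{1-\omega^2}$ (principal branch) is holomorphic on $U$ as the composition of two holomorphic maps. Squaring recovers $\lambda^2(\omega)=1-\omega^2$, and $\re\lambda(\omega)>0$ since the principal branch of $\sqrt{\cdot}$ has strictly positive real part away from $(-\infty,0]$. Finally, the conjugation property follows from $\overline{1-\omega^2}=1-\bar\omega^2$ together with $\overline{\sqrt{z}}=\sqrt{\bar z}$ for the principal branch:
\[
\overline{\lambda(\omega)}=\overline{\sqrt{1-\omega^2}}=\sqrt{\overline{1-\omega^2}}=\sqrt{1-\bar\omega^2}=\lambda(\bar\omega).
\]

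There is no real obstacle here; the lemma is essentially a bookkeeping statement about branch cuts. The only mild subtlety is confirming that the slit in the $\omega$-plane removed to define $U$ is exactly what is needed so that $1-\omega^2$ avoids the branch cut $(-\infty,0]$ of the principal square root, which is handled by the elementary computation above.
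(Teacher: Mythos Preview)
Your proposal is correct and essentially matches the paper's approach: both define $\lambda(\omega)=\sqrt{1-\omega^2}$ via the principal branch and verify $\re\lambda>0$ by showing that $1-\omega^2$ cannot lie on $(-\infty,0]$ for $\omega\in U$. The only cosmetic difference is that the paper phrases this as a continuity/contradiction argument (if $\re\lambda(\omega_0)=0$ then $\omega_0^2\ge1$, contradicting $\omega_0\in U$), whereas you verify the image inclusion directly; you also spell out the conjugation identity, which the paper merely asserts.
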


\begin{proof}
We claim that the desired mapping is $\omega\in U\mapsto\lambda(\omega)=\sqrt{1-\omega^2}\in\C$ (we take the positive square root) analytically continued from the interval $-1<\omega<1$ to the domain $U$. We need only check that $\re(\lambda(\omega))$ is strictly positive on $U$. If not, then by continuity of $\lambda(\omega)$ on $U$ there is some $\omega_0\in U$ where  $\re(\lambda(\omega_0))=0$. Then,
$  \lambda(\omega_0)= i\zeta$,
for some $\zeta\in\R$. Squaring both sides of this equation yields
$ 1-\omega_0^2 = -\zeta^2$
or equivalently
$ \omega_0^2 = 1+\zeta^2 > 1$, which contradicts that assumption that $\omega_0\in U$.
\end{proof}

For all $\omega\in U$, the matrices $M_+(\omega,\tau)$ and $M_-(\omega,\tau)$ (see \eqref{eq:Mpm_def}) both have eigenvalues~$\pm\lo$.
  The  eigenpairs of $M_{\pm}$ are given by
\begin{subequations}\label{eq:vpmpm}
\begin{align}
    M_{-}(\omega,\tau)v_-^{(+)} &= +\lo\ v_-^{(+)} \, , &v_-^{(+)} &= \begin{pmatrix}
    1 \\ \omega-i\lo
    \end{pmatrix}  \, , \\
    M_{-}(\omega,\tau)v_-^{(-)} &= -\lo\ v_-^{(-)} \, , &v_-^{(-)} &= \begin{pmatrix}
    1 \\ \omega+i\lo
    \end{pmatrix} \, , \\
    M_+(\omega,\tau)v_+^{(+)} &= +\lo\ v_+^{(+)} \, , &v_+^{(+)}&= \begin{pmatrix}e^{-i\tau} \\ \omega-i\lo \end{pmatrix} \, ,\\
     M_+(\omega,\tau)v_+^{(-)} &= -\lo\ v_+^{(-)} \, , &v_+^{(-)} &= \begin{pmatrix}e^{-i\tau} \\ \omega+i\lo \end{pmatrix} \, .
\end{align}
\end{subequations}
Any solution of \eqref{eq:beta}, $\partial_x \beta = M \beta$, can be constructed by piecing together the  states $e^{\pm \lo x}v_-^{(\pm)}$ for $x<0$
 and $e^{\pm \lo x}v_+^{(\pm)}$ for $x>0$ across
 the equation's discontinuity at $x=0$. 

 We seek a construction of the  resolvent
  $(\D(\tau)-\omega)^{-1}$ which for all $\omega\in U$, is a bounded linear operator on $L^2 (\R ;\C^2)$. Our construction uses
   the method of {\it variation of parameters}
    \cite{coddington1955theory}, which produces 
    the resolvent kernel by appropriate  modulation of two linearly independent solutions, decaying at $x\to \pm \infty$, respectively. These are introduced in the following proposition. 
     
\begin{defn}\label{def:jost}
For any $\tau \in [0,2\pi]$ and $\omega\in U$ (see \eqref{eq:Udef}), we define the  solutions $\jostw_{\pm}(x;k,\tau)$, to be the unique solutions of
\begin{subequations}
\begin{align*}
     (\D(\tau)-\omega)\jostw_{\pm}(x;\omega,\tau) = 0\ ,\quad -\infty<x<\infty,
\end{align*}
with the asymptotic behavior
\begin{align}
     \lim_{x\to+\infty} e^{\lo x}\jostw_{+}(x;\omega,\tau) &= v_+^{(-)}(\omega,\tau) \, , \label{xi+bc}\\
      \lim_{x\to-\infty} e^{-\lo x} \jostw_{-}(x;\omega,\tau) &= v_-^{(+)}(\omega,\tau) \, ,\label{xi-bc}
\end{align}
where $\lambda(\omega)$ is given by Lemma \ref{lem:lam-branch}. For $\omega\in U$, $\jostw_+(x;\omega,\tau)$
 is exponentially decaying as $x\to\infty$ 
  and $\jostw_-(x;\omega,\tau)$
 is exponentially decaying as $x\to-\infty$.
\end{subequations}
\end{defn}
\begin{remark}
In Proposition \ref{prop:xi_pm}  we display explicit expressions for the homogeneous solutions, $\jostw _{\pm}$,
 of $(\D(\tau)-\omega)\jostw=0$ for a piecewise constant domain wall. For the case of general domain walls, the existence of a unique solution satisfying
 \eqref{xi+bc} (respectively \eqref{xi-bc})  follows from a standard equivalent formulation as a fixed point problem 
 for a Volterra (integral) operator of the type that arise in the construction of {\it Jost solutions} for Schr{\"o}dinger operators \cite{reed1979scattering}. 
\end{remark}
  The solutions  $\jostw_\pm(x;\omega,\tau)$,
   introduced in Definition \ref{def:jost}, are displayed in the following proposition, whose proof is presented in Appendix \ref{app:pf_prop_xipm}.
\begin{prop}\label{prop:xi_pm}
Assume $\omega\in U = \C\setminus\{(-\infty,-1]\cup[1,\infty) \}$ so that $\re(\lo)>0$, and $\tau \in [0,2\pi]$.
\begin{enumerate}
\item 
The solutions $\jostw_\pm(x;\omega,\tau)$ have the explicit expressions:
\begin{subequations}\label{eq:xi_pm}
\begin{align} 
    \jostw_{+}(x;\omega,\tau) &= \begin{cases}  v^{(-)}_+\ e^{-\lo x} & x > 0\\
    A\ v_-^{(+)} e^{\lo x}+B\ \ v_-^{(-)} e^{-\lo x} & x < 0
    \end{cases} \label{eq:xi+}\\
     \jostw_{-}(x;\omega,\tau) &= \begin{cases} C\  v_+^{(+)} e^{\lo x}+D\ v_+^{(-)} e^{-\lo x} & x > 0\\
     v_-^{(+)}\ e^{\lo x} & x < 0
    \end{cases}.\label{eq:xi-}
\end{align}
\end{subequations}
Here, the vectors $v_\pm^{(\pm)}=v_\pm^{(\pm)}(\omega,\tau)$ are displayed in \eqref{eq:vpmpm} and $A,B,C,D$, which depend on $\omega$ and $\tau$,  are given by
\begin{subequations}\label{eq:ABCD}
\begin{align}
    A (\omega, \tau)&=  \frac{(e^{-i\tau}-1)(\omega+i\lo)}{2i\lo}, \\
    B(\omega, \tau) &=
    \frac{\omega(1-e^{-i\tau}) + i\lo(1+e^{-i\tau})}{2i\lo}  , \label{eq:b}\\
    C (\omega, \tau) &=\ \frac{\omega(e^{i\tau}-1)+i\lo(e^{i\tau}+1)}{2i\lo}, \\
    D(\omega, \tau) &= \frac{(1-e^{i\tau})(\omega-i\lo)}{2i\lo}.
\end{align}
\end{subequations}
\item Moreover, we have the algebraic relations
\begin{subequations}\label{eq:Sxi_symmetries}
\begin{align}
    \overline{\jostw_{+}(x;\omega,\tau)} &= S(\tau)\jostw_{-}(-x;\overline{\omega},\tau)\ ,\label{eq:Sxi_symmetries1} \\
    \overline{\jostw_{-}(x;\omega,\tau)} &= S(\tau)\jostw_{+}(-x;\overline{\omega},\tau)\ ,\label{eq:Sxi_symmetries2}
\end{align}
\end{subequations}
where $S(\tau)$ is the $2\times 2$ matrix defined
\begin{align*}
    S(\tau) &= \begin{pmatrix}
     e^{i\tau} & 0 \\  0 & 1
     \end{pmatrix}.
\end{align*}

\end{enumerate}
\end{prop}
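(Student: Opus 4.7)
The plan is to treat the two parts of Proposition \ref{prop:xi_pm} separately: first I would solve the piecewise-constant-coefficient ODE by a matching procedure at $x=0$, and then establish the symmetry identities by a uniqueness argument at the ODE level.

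\textbf{Part (1).} The equation $(\D(\tau)-\omega)\jostw=0$ is equivalent to $\partial_x\jostw=M(x;\omega,\tau)\jostw$ with $M$ piecewise constant as in \eqref{eq:Mxktau}. Because $i\sigma_3\partial_x$ acting on a jump-discontinuous function produces a Dirac mass while the right hand side of the ODE is regular, $\jostw$ must be continuous at $x=0$. On each half-line the general solution is a linear combination of the two modes $v_\pm^{(\pm)}e^{\pm\lo x}$ from \eqref{eq:vpmpm}. For $\jostw_+$: since $\re\lo>0$, the only mode on $x>0$ compatible with \eqref{xi+bc} is $v_+^{(-)}e^{-\lo x}$, with coefficient $1$. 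On $x<0$ write the general solution as $Av_-^{(+)}e^{\lo x}+Bv_-^{(-)}e^{-\lo x}$ and impose continuity at $x=0$ to obtain the $2\times 2$ linear system
\[
\begin{pmatrix}1 & 1\\ \omega-i\lo & \omega+i\lo\end{pmatrix}\begin{pmatrix}A\\ B\end{pmatrix}=\begin{pmatrix}e^{-i\tau}\\ \omega+i\lo\end{pmatrix},
\]
whose determinant is $2i\lo$. Cramer's rule yields the expressions for $A,B$ in \eqref{eq:ABCD}. The formulas for $\jostw_-$ and the coefficients $C,D$ are obtained analogously: \eqref{xi-bc} fixes the $x<0$ piece to be $v_-^{(+)}e^{\lo x}$, and matching it to $Cv_+^{(+)}e^{\lo x}+Dv_+^{(-)}e^{-\lo x}$ at $x=0$ produces a $2\times 2$ linear system with determinant $2i\lo\,e^{-i\tau}$.

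\textbf{Part (2).} I would prove \eqref{eq:Sxi_symmetries1} by uniqueness. Define $\phi(x):=S(\tau)\jostw_-(-x;\overline\omega,\tau)$. Taking the complex conjugate of $(i\sigma_3\partial_x+V(x)-\omega)\jostw_+=0$ shows that $\overline{\jostw_+(x;\omega,\tau)}$ satisfies
\[
\bigl(-i\sigma_3\partial_x+\overline{V(x)}-\overline\omega\bigr)\psi=0.
\]
Substituting $y=-x$ in the equation for $\jostw_-(\cdot;\overline\omega,\tau)$ and multiplying on the left by $S(\tau)$ gives
\[
\bigl(-i\,S(\tau)\sigma_3 S(\tau)^{-1}\partial_x+S(\tau)V(-x)S(\tau)^{-1}-\overline\omega\bigr)\phi=0.
\]
Since $S(\tau)$ is diagonal it commutes with $\sigma_3$, and a direct check, using $\overline{\sigma_1}=\sigma_1$, $\overline{\sigma_\star(\tau)}=\sigma_\star(-\tau)$, and the piecewise definition of $V(x)$, verifies the key conjugation identity
\[
S(\tau)V(-x)S(\tau)^{-1}=\overline{V(x)}\qquad\text{for all } x\in\R\setminus\{0\}.
\]
Hence $\phi$ and $\overline{\jostw_+(\cdot;\omega,\tau)}$ satisfy the same linear ODE on each half-line and are both continuous at $x=0$. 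Matching asymptotics as $x\to+\infty$ via Lemma~\ref{lem:lam-branch} (so that $\overline\lo=\lambda(\overline\omega)$), together with the elementary vector identity $\overline{v_+^{(-)}(\omega,\tau)}=S(\tau)v_-^{(+)}(\overline\omega,\tau)$ read off from \eqref{eq:vpmpm}, shows that both sides decay like $\overline{v_+^{(-)}(\omega,\tau)}e^{-\lambda(\overline\omega)x}$. Uniqueness of the exponentially decaying solution at $+\infty$ then forces equality on all of $\R$, establishing \eqref{eq:Sxi_symmetries1}. The companion identity \eqref{eq:Sxi_symmetries2} follows either by the same reasoning with $\jostw_+$ and $\jostw_-$ interchanged, or by applying complex conjugation and the transformation $x\mapsto -x$ once more to \eqref{eq:Sxi_symmetries1}.

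The calculations throughout are elementary; the principal source of error is bookkeeping, namely tracking how complex conjugation interacts with the piecewise matrix $V(x)$ (in particular the transformation $\sigma_\star(\tau)\mapsto\sigma_\star(-\tau)$), and verifying that conjugation by the \emph{single} matrix $S(\tau)$ corrects the two potentials on both half-lines simultaneously. Once the identity $S(\tau)V(-x)S(\tau)^{-1}=\overline{V(x)}$ is established, the remainder is a standard uniqueness-of-Jost-solutions argument.
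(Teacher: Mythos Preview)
Your proposal is correct and follows essentially the same approach as the paper: Part~(1) is handled by continuity matching at $x=0$ leading to a $2\times2$ linear system for $(A,B)$ and $(C,D)$, and Part~(2) by showing both sides solve the same first-order ODE with the same exponential asymptotics and invoking uniqueness. The only cosmetic difference is that you package the algebraic input for Part~(2) as the single conjugation identity $S(\tau)V(-x)S(\tau)^{-1}=\overline{V(x)}$, whereas the paper records the equivalent Pauli-matrix relations $S\sigma_3=\sigma_3S$, $S\sigma_1=\overline{\sigma_\star}S$, $S\sigma_\star=\sigma_1S$ and checks asymptotics at $-\infty$ rather than $+\infty$.
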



%

\subsection{Constructing the resolvent kernel} \label{sec:resolventKernel}

In this section we give an explicit construction of the resolvent kernel in terms of the solutions $\jostw_{\pm}$ of Proposition \ref{prop:xi_pm}.
For a fixed $\tau$, we omit the $\tau$ dependence and write $\jostw_{\pm}(x;\omega,\tau)=\jostw_{\pm}(x;\omega)$ for brevity.
%
\begin{prop}\label{prop:resolvent}
The resolvent kernel of $\Reso(\omega,\tau) = (\D(\tau)-\omega)^{-1}$ is given by
\begin{align*}
    \Reso(\omega,\tau)(x,y) &= \frac{i}{\varphi(\omega,\tau)}\begin{pmatrix} \jostw_{+,1}(x;\omega) & \jostw_{-,1}(x;\omega)\\ \jostw_{+,2}(x;\omega) & \jostw_{-,2}(x;\omega)\end{pmatrix}\begin{pmatrix}
    \mathbbm{1}_{(-\infty,x]}(y)\jostw_{-,2}(y;\omega) & \mathbbm{1}_{(-\infty,x]}(y)\jostw_{-,1}(y;\omega) \\ \mathbbm{1}_{[x,\infty)}(y)\jostw_{+,2}(y;\omega) & \mathbbm{1}_{[x,\infty)}(y)\jostw_{+,1}(y;\omega)
    \end{pmatrix}\ ,
\end{align*}
where
\begin{align}\label{eq:varphi_def}
    \varphi(\omega,\tau) :\,= i\lo(e^{-i\tau}+1)  -\omega(e^{-i\tau}-1)\ ,
\end{align}
$\lambda (\omega)$ is given by Lemma \ref{lem:lam-branch}, and  $\jostw_{\pm, j}$ for $j=1,2$ are the $j$-th coordinates of the Jost solutions, given 
which are given by Proposition \ref{prop:xi_pm}.
\end{prop}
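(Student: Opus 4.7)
The strategy is the standard variation of parameters for the first-order system equivalent to $(\D(\tau)-\omega)\beta = f$. Multiplying through by $(i\sigma_3)^{-1} = -i\sigma_3$ turns the resolvent equation into
$$\partial_x\beta \;=\; M(x;\omega,\tau)\,\beta \;-\; i\sigma_3 f,$$
with $M$ as in \eqref{eq:Mxktau}. The two linearly independent homogeneous solutions are precisely the Jost solutions $\jostw_\pm(x;\omega,\tau)$ of Proposition \ref{prop:xi_pm}, and I would use them as the columns of a fundamental matrix $\Psi(x;\omega,\tau) = [\jostw_+(x;\omega) \mid \jostw_-(x;\omega)]$.

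First I would compute the Wronskian $W(\omega,\tau) := \det \Psi$. Because $\tr M_\pm \equiv 0$, Liouville's formula makes $W$ piecewise constant in $x$; since $\jostw_\pm$ are continuous at $x=0$ by construction, $W$ is in fact globally constant. Evaluating at any $x>0$ using \eqref{eq:xi+}--\eqref{eq:ABCD}, the $e^{\pm 2\lambda x}$ terms cancel and a short computation yields $W(\omega,\tau) \;=\; -2i\lambda(\omega)e^{-i\tau}\,C(\omega,\tau) \;=\; -\varphi(\omega,\tau).$

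Next I would carry out variation of parameters: writing $\beta(x) = c_+(x)\jostw_+(x;\omega) + c_-(x)\jostw_-(x;\omega)$ and imposing the ODE reduces to $\Psi(x)(c_+', c_-')^{T} = -i\sigma_3 f$, from which Cramer's rule gives explicit line integrands for $c_\pm'$ with denominator $W = -\varphi$. For $\omega \in U$, $\re\lambda(\omega) > 0$, so $\jostw_+$ is the unique exponentially growing mode at $-\infty$ and $\jostw_-$ the unique exponentially growing mode at $+\infty$; the requirement $\beta \in L^2(\R;\C^2)$ therefore forces
$$c_+(x) = \int_{-\infty}^{x} c_+'(y)\,dy, \qquad c_-(x) = -\int_{x}^{+\infty} c_-'(y)\,dy.$$
Substituting, reading off the integral kernel $G(x,y)$ from $\beta(x) = \int G(x,y) f(y)\,dy$, and bundling the scalar integrand into matrix form produces exactly the displayed factorisation: the prefactor $-i/W \cdot (-1) = i/\varphi$ emerges from combining $-i\sigma_3$ with $1/W$, and the two indicator functions arise from the choice of integration limits above.

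The main obstacle is bookkeeping: getting the sign right in $W = -\varphi$, correctly propagating the $\sigma_3$ factor through Cramer's rule, and placing the indicator functions on the correct columns of the rightmost matrix so as to match the stated factorisation (rather than, say, a transposed version). A secondary verification is that $\varphi(\omega,\tau) \neq 0$ throughout $U$ — equivalent to linear independence of $\jostw_\pm$ on $\R$, which fails exactly at the spectral-gap eigenvalue $\omega_\tau$ of Theorem \ref{prop:specD} — so the formula defines a genuine kernel on $U$, and the exponential decay of $\jostw_\pm$ on their respective half-lines guarantees the $L^2 \to L^2$ boundedness needed to identify this kernel with $(\D(\tau) - \omega)^{-1}$.
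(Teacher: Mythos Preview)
Your proposal is correct and follows essentially the same approach as the paper: rewrite $(\D(\tau)-\omega)\beta=f$ as $(\partial_x-M)\beta=-i\sigma_3 f$, take the fundamental matrix with columns $\jostw_+,\jostw_-$, use $\tr M\equiv 0$ and Liouville to see the Wronskian is constant and equals $-\varphi(\omega,\tau)$, then solve for $c'$ by Cramer's rule and fix the integration limits via the decay of $\jostw_\pm$ at $\pm\infty$. The only cosmetic difference is that you evaluate the Wronskian at a point $x>0$ (obtaining $-2i\lambda e^{-i\tau}C=-\varphi$) whereas the paper evaluates at $x=0$; your remarks on $\varphi\neq 0$ and $L^2$-boundedness are handled in the paper immediately afterwards in Proposition~\ref{prop:eigs-poles}.
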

\begin{proof}
To obtain the resolvent operator, we need to solve the inhomogeneous ODE for $\gamma (x)$
\begin{equation*}
    \left(\D(\tau)-\omega\right)\gamma (x) = f(x)\ ,
\end{equation*}
for general $f\in L^2(\R ;\C^2)$, which we do by the method of variations of parameters. This problem can be rewritten as
\begin{align}\label{eq:isigma3f_ode}
    (\partial_x-M(x,\omega))\gamma (x) = -i\sigma_3 f(x)\ ,
\end{align}
where $M(x,\omega)=M(x;\omega,\tau)$ is given by \eqref{eq:Mxktau}. Let $X(x;\omega)=X(x;\omega,\tau)$ be the fundamental matrix (of the homogeneous problem) with columns given by $\jostw_{\pm}(x;\omega)=(\jostw_{\pm,1},\jostw_{\pm,2})^\top$,
\begin{align*}
    X(x;\omega) = \begin{pmatrix}
    \jostw_{+}(x;\omega) & \jostw_{-}(x;\omega)
    \end{pmatrix}.
\end{align*}
A particular solution of the inhomogeneous system \eqref{eq:isigma3f_ode} can be found in the form
\begin{align}\label{eq:varofparam}
    \gamma(x;\omega) = X(x;\omega)c(x;\omega)\ ,
\end{align}
where  $c(\cdot;\omega):\R\to \C^2$ is to be determined.
Substitution into \eqref{eq:isigma3f_ode} yields
\begin{align}\label{eq:xcf_varparam}
    X(x;\omega)c'(x;\omega) = -i\sigma_3 f(x).
\end{align}

To invert $X(x;\omega)$ in \eqref{eq:xcf_varparam}, note that since $\tr M(x;\omega)\equiv 0$ for all $x\in \R$, we have by Liouville's formula
\begin{align*}
    \det X(x;\omega) &= e^{\int_0^x\tr M(y;\omega)\, dy }\det X(0;\omega) = \det X(0;\omega)\\
    &=\omega(e^{-i\tau}-1) -i\lo(e^{-i\tau}+1). 
\end{align*}
We set, as in \eqref{eq:varphi_def},
\begin{equation*}
  \varphi(\omega,\tau)\equiv i\lo(e^{-i\tau}+1)  -\omega(e^{-i\tau}-1).
\end{equation*}
Solving \eqref{eq:xcf_varparam} for $c'(x;\omega)$, we get
\begin{align}
    c'(x;\omega) &= -i X(x;\omega)^{-1}\sigma_3 f(x) \nonumber\\
    &= \frac{i}{\varphi(\omega,\tau)}\begin{pmatrix}
    \jostw_{-,2}(x;\omega) &  \jostw_{-,1}(x;\omega) \\ -\jostw_{+,2}(x;\omega) & -\jostw_{+,1}(x;\omega)
    \end{pmatrix}\begin{pmatrix}
    f_1(x)\\ f_2(x)
    \end{pmatrix}\label{eq:cprime}
\end{align}
When constructing $c(x;\omega)$ by integration of \eqref{eq:cprime},
we must choose limits of integration of each component. 
 Informed by the exponential decay 
  of $\jostw_\pm$ as $x\to\pm\infty$, we find that in order
  to ensure that $\gamma(x;\omega)=X(x;\omega)c(x;\omega)$ decays as $x\to\pm\infty$ we must take:
  
\begin{align*}
    c(x,\omega) = \frac{i}{\varphi(\omega,\tau)}\int_{\R}\begin{pmatrix}
    \mathbbm{1}_{(-\infty,x]}(y)\jostw_{-,2}(y;\omega) & \mathbbm{1}_{(-\infty,x]}(y)\jostw_{-,1}(y;\omega) \\ \mathbbm{1}_{[x,\infty)}(y)\jostw_{+,2}(y;\omega) & \mathbbm{1}_{[x,\infty)}(y)\jostw_{+,1}(y;\omega)
    \end{pmatrix}\begin{pmatrix}
    f_1(y)\\ f_2(y)
    \end{pmatrix}dy
\end{align*}
With this choice we can write the resolvent in kernel form:
\begin{align}\label{eq:resolvent}
    \Reso(\omega,\tau)f(x) = \int_{\R}\Reso(\omega,\tau)(x,y) f(y) dy , .
\end{align}
where
\begin{align*}
  &  \Reso(\omega,\tau)(x,y) =\\
&\ \frac{i}{\varphi(\omega,\tau)}\ \begin{pmatrix} \jostw_{+,1}(x;\omega) & \jostw_{-,1}(x;\omega)\\ \jostw_{+,2}(x;\omega) & \jostw_{-,2}(x;\omega)\end{pmatrix}\begin{pmatrix}
    \mathbbm{1}_{(-\infty,x]}(y)\jostw_{-,2}(y;\omega) & \mathbbm{1}_{(-\infty,x]}(y)\jostw_{-,1}(y;\omega) \\ \mathbbm{1}_{[x,\infty)}(y)\jostw_{+,2}(y;\omega) & \mathbbm{1}_{[x,\infty)}(y)\jostw_{+,1}(y;\omega) \nonumber
    \end{pmatrix}.
\end{align*}
\end{proof}

Our construction of $\Reso(\omega,\tau)(x,y)$ implies the following result, which encompasses Theorem \ref{prop:specD}:
\begin{prop}\label{prop:eigs-poles}
  \begin{enumerate}\item  Let $\omega_{\tau}$ be defined as
  \begin{align*}
  \omega_{\tau} \equiv   \cos(\tau/2) \, . 
  \end{align*}
  For $\omega\in \C\setminus\{(-\infty,-1]\cup\{\omega_{\tau}\}\cup[1,\infty) \}$ , the mapping $f(x)\mapsto \Reso(\omega,\tau)f(x)$, given in \eqref{eq:resolvent}, defines a bounded operator on a dense subspace of $L^2(\R;\C^2)$. 
  \item The resolvent kernel, $\Reso(\omega,\tau)(x,y)$, has a pole  at $\omega = \omega_{\tau}$.
  \item Correspondingly, $\D(\tau)$ has an eigenvalue  $\tau\in[0,2\pi]\mapsto \omega_{\tau} =  \cos(\tau/2) $, which traverses the spectral gap $(-1,1)$.
  \end{enumerate}
\end{prop}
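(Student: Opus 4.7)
The plan is to exploit the explicit formula of Proposition~\ref{prop:resolvent}, in which the scalar $\varphi(\omega,\tau)$ of \eqref{eq:varphi_def} plays the role of a Wronskian (indeed, $\varphi(\omega,\tau)=\det X(0;\omega,\tau)$ by Liouville's formula as computed in the proof of Proposition~\ref{prop:resolvent}). The three claims then reduce to (a) locating the zeros of $\varphi$ in the gap, (b) showing the kernel defines a bounded operator when $\varphi\neq 0$, and (c) analyzing what happens where $\varphi=0$. For (a), I would simplify using $e^{-i\tau}+1=2e^{-i\tau/2}\cos(\tau/2)$ and $e^{-i\tau}-1=-2ie^{-i\tau/2}\sin(\tau/2)$ to write
\[
\varphi(\omega,\tau) = 2ie^{-i\tau/2}\bigl[\lo\cos(\tau/2) + \omega\sin(\tau/2)\bigr],
\]
so $\varphi(\omega,\tau)=0$ amounts to $\lo = -\omega\tan(\tau/2)$. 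Squaring and using $\lo^2=1-\omega^2$ yields $\omega^2 = \cos^2(\tau/2)$, and the branch constraint $\re\lo > 0$ from Lemma~\ref{lem:lam-branch} singles out the unique real root in $(-1,1)\subset U$ for $\tau\in(0,2\pi)$, identified with $\omega_\tau=\cos(\tau/2)$.

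For (b), given $\omega\in U\setminus\{\omega_\tau\}$ we have $\varphi(\omega,\tau)\neq 0$ and $\re\lo>0$. The closed forms \eqref{eq:xi_pm} imply the uniform bound $|\jostw_+(x;\omega)|\leq C(\omega,\tau) e^{-\re\lo\, x}$ for all $x\in\R$ (a decay as $x\to+\infty$, a growth as $x\to-\infty$), and symmetrically $|\jostw_-(y;\omega)|\leq C(\omega,\tau)e^{\re\lo\, y}$ for all $y\in\R$. Substituting into the formula of Proposition~\ref{prop:resolvent} and using the indicator functions yields the pointwise kernel estimate
\[
\|\Reso(\omega,\tau)(x,y)\| \leq \frac{C(\omega,\tau)}{|\varphi(\omega,\tau)|}\,e^{-\re\lo\,|x-y|},
\]
so Schur's test immediately gives boundedness on $L^2(\R;\C^2)$, acting on the dense subspace $L^1\cap L^2$.

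For (c), at $\omega=\omega_\tau$ the vanishing of $\varphi(\omega_\tau,\tau)=\det X(0;\omega_\tau)$ means $\jostw_+(\cdot;\omega_\tau)$ and $\jostw_-(\cdot;\omega_\tau)$ are linearly dependent: $\jostw_+=c\,\jostw_-$ for some $c\neq 0$. Since $\jostw_+$ decays exponentially as $x\to+\infty$ and $\jostw_-$ decays exponentially as $x\to-\infty$, this common solution lies in $L^2(\R;\C^2)$ and is an eigenfunction of $\D(\tau)$ with eigenvalue $\omega_\tau$; matching the vector directions at $x=0$ using \eqref{eq:vpmpm}--\eqref{eq:ABCD} recovers the explicit form \eqref{eq:alpha*}. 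The pole of the resolvent kernel at $\omega_\tau$ is simple because a short computation using $\lambda'(\omega)=-\omega/\lo$ gives $\partial_\omega\varphi(\omega_\tau,\tau)\neq 0$, while the matrix numerator in Proposition~\ref{prop:resolvent} does not vanish identically at $\omega_\tau$. Finally, $\tau\mapsto\cos(\tau/2)$ is monotone on $[0,2\pi]$ and covers $[-1,1]$, so the eigenvalue curve traverses the spectral gap. The main obstacle I expect is verifying that the matrix numerator is not identically zero at $\omega_\tau$, ensuring the zero of $\varphi$ produces a genuine pole rather than a removable singularity; this amounts to an explicit check using \eqref{eq:vpmpm}--\eqref{eq:ABCD} at $\omega=\omega_\tau$, exploiting the fact that the proportionality $\jostw_+=c\,\jostw_-$ controls precisely which entries of the numerator can collapse.
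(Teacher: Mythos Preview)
Your proposal is correct and follows essentially the same approach as the paper: both identify the pole of the resolvent with the zero of $\varphi(\omega,\tau)$ (the Wronskian), and both then produce the eigenfunction from the resulting linear dependence of the Jost solutions. Two minor differences are worth noting. First, for part~(1) you supply an explicit boundedness argument via the kernel estimate $\|\Reso(\omega,\tau)(x,y)\|\lesssim |\varphi|^{-1}e^{-\re\lo\,|x-y|}$ and Schur's test; the paper's proof does not spell this out and effectively leaves it implicit in the construction. Second, for part~(3) the paper uses a slightly more direct shortcut: it observes from \eqref{eq:b} that $B(\omega,\tau)=-\varphi(\omega,\tau)/(2i\lo)$, so at $\omega=\omega_\tau$ the coefficient $B$ vanishes and the explicit formula \eqref{eq:xi+} for $\jostw_+$ is already manifestly exponentially decaying at both ends---no separate linear-dependence argument is needed. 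Your route via $\jostw_+=c\,\jostw_-$ is equivalent and arguably more robust (it does not rely on the piecewise-constant structure), while the paper's route is quicker given the explicit formulas already in hand.
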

\begin{proof} 
The energy, $\omega$, is a pole of the resolvent kernel if and only if the Wronskian $\det X(0;\omega,\tau)=\varphi(\omega,\tau)$ vanishes. Thus, 
 $  \omega( e^{-i\tau}-1) - i\lo (e^{-i\tau}+1) = 0$,
which has the unique solution
\begin{align}\label{eq:omega_tau}
    \omega_{\tau} = \cos(\tau/2).
\end{align}
The corresponding eigenstate can be deduced as follows. Consider the expression
\eqref{eq:xi+} for the function $\eta_+(x;\omega,\tau)$, satisfying $\D(\tau)\eta_+=\omega\eta_+$. By \eqref{eq:b}
and the expression \eqref{eq:varphi_def} for $\varphi(\omega,\tau)$
    \begin{equation*}
       B = -\frac{1}{2i\lambda(\omega)} \varphi(\omega,\tau).
     \end{equation*}
     Since $\varphi(\omega_\tau,\tau)=0$, then $B=0$ and so we have that 
\[\jostw_{+}(x;\omega_\tau,\tau) = \begin{cases}  v^{(-)}_+(\omega_\tau,\tau)\times e^{-\lambda(\omega_\tau) x} & x > 0\\
    A(\omega_\tau,\tau)\ v_-^{(+)}(\omega_\tau,\tau)\times e^{\lambda(\omega_\tau) x}& x < 0
    \end{cases}, \label{eq:tau-mode}
    \]
    which decays exponentially as $x\to\pm\infty$ since $\lambda(\omega_\tau)>0$ (Lemma \ref{lem:lam-branch}). Finally, define 
    $\psi_\tau(x)$ to be a constant multiple 
     of $\jostw_{+}(x;\omega_\tau,\tau)$ for which $\|\psi_\tau\|_{L^2(\R)}=1$. This yields the expression displayed in~\eqref{eq:alpha*}. 
\end{proof}

\section{Limits of the resolvent kernel as the energy approaches $\sigma_{\rm ess}(\D(\tau))$}\label{sec:LAP}


A representation of the time-evolution for the Hamiltonian, $\D(\tau)$, is given in \eqref{eq:stones} based on the functional calculus and Stone's formula for the spectral measure of $\D(\tau)$. To work with this representation we need to evaluate the $\Resk_+(k,\tau)-\Resk_-(k,\tau)  $, where we recall the definition
\begin{align}
    \Resk_{\pm}(k,\tau) :\,= \lim_{\eps\to 0^+} \Reso(\sqrt{1+k^2}\pm i\eps, \tau) = \lim_{\eps\to 0^{+}}(\D(\tau)-(\sqrt{1+k^2}\pm i\eps))^{-1} ,
\label{eq:cRpm}\end{align}
for $k\in \R$ (corresponding to the spectral parameter, $\omega$, being in the essential spectrum of $\D(\tau)$). In this section we shall study the limits \eqref{eq:cRpm} by studying the resolvent kernel. Then, in Section \ref{sec:scattering} we study the properties of the difference $\Resk_+(k,\tau)-\Resk_-(k,\tau)  $, which are summarized in Proposition~\ref{prop:limAbs}.

In order to calculate the limits \eqref{eq:cRpm}, we remark that by our choice of $\lambda(\omega)$ (Lemma \ref{lem:lam-branch}) for $\omega\in[1,\infty)$ we have
\begin{align} \label{eq:squarerootlimit}
    \lim_{\eps\to 0^{+}}\lambda(\omega\pm i\eps) = \mp i\sqrt{\omega^2-1} .
\end{align}

Let $k\in\R$ and define \textit{Jost solutions}, $\jostk_{\pm}(x;k,\tau)$, which satisfy
\begin{align*}
    (\D(\tau)-\sqrt{1+k^2})\jostk_{\pm}(x;k,\tau) = 0\ ,\quad -\infty<x<\infty,
\end{align*}
with asymptotic behavior
\begin{align*}
     \lim_{x\to+\infty} e^{-ikx}\jostk_{+}(x;k,\tau) &= \begin{pmatrix}
    e^{-i\tau}\\ \sqrt{1+k^2} +k
    \end{pmatrix} \, , \\
      \lim_{x\to-\infty} e^{+ikx} \jostk_{-}(x;k,\tau) &= \begin{pmatrix}
      1 \\ \sqrt{1+k^2} - k
      \end{pmatrix}\  .
\end{align*}
The following result states that the Jost solutions arise as limits of the decaying solutions $\jostw_{\pm}(x;\omega,\tau)$ for $\omega$ approaching the essential spectrum. 
\begin{prop}\label{prop:jostkdef}
    The Jost solutions $\jostk_{\pm}(x;k,\tau)$ are related to the decaying solutions $\jostw_{\pm}(x;\omega,\tau)$ through the following limits.
    \begin{align*}
    \jostk_{+}(x;\pm \vert k\vert,\tau) &= \lim_{\eps\to 0^+}\jostw_{+}(x;\sqrt{1+k^2}\pm i\eps,\tau) ,\\
    \jostk_{-}(x;\pm \vert k\vert,\tau) &= \lim_{\eps\to 0^+}\jostw_{-}(x;\sqrt{1+k^2}\pm i\eps,\tau) .
\end{align*}
\end{prop}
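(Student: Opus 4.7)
The plan is to take the explicit piecewise-exponential formulas for $\jostw_{\pm}(x;\omega,\tau)$ from Proposition \ref{prop:xi_pm}, pass to the limit $\eps \to 0^+$ with $\omega = \sqrt{1+k^2} \pm i\eps$ using the branch identity \eqref{eq:squarerootlimit}, and check that the resulting functions satisfy the defining ODE and boundary conditions of $\jostk_{\pm}(x;\pm|k|,\tau)$. Uniqueness of the Jost solution then closes the argument.

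By Lemma \ref{lem:lam-branch} and \eqref{eq:squarerootlimit}, for $k\in \R\setminus\{0\}$ one has $\lim_{\eps \to 0^+} \lambda(\sqrt{1+k^2}\pm i\eps) = \mp i|k|$. Substituting these values into the eigenvectors $v_\pm^{(\pm)}(\omega,\tau)$ of \eqref{eq:vpmpm} and into the coefficients $A,B,C,D$ of \eqref{eq:ABCD} yields well-defined complex limits, since the denominator $2i\lambda$ stays bounded away from zero. The scalar exponential factors converge uniformly on compact sets (for instance $e^{-\lambda(\sqrt{1+k^2}+i\eps)x} \to e^{i|k|x}$), so the piecewise expressions \eqref{eq:xi_pm} converge pointwise in $x$ to explicit oscillatory functions.

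To finish, I verify that these limits solve $(\D(\tau) - \sqrt{1+k^2})\eta = 0$ on $\R$ with the correct asymptotic behavior. On each half-line the limit is a linear combination of eigenvectors of $M_{\pm}(\sqrt{1+k^2},\tau)$ multiplied by oscillatory exponentials $e^{\pm i|k| x}$, which solve \eqref{eq:beta} because $(\pm i|k|)^2 = 1 - (1+k^2)$, consistent with \eqref{eq:poly}. Continuity at $x = 0$ carries over from the $\eps > 0$ solutions since all coefficients have finite limits, so the matching condition across the discontinuity of $\D(\tau)$ is preserved. Taking $x \to \pm\infty$ reproduces exactly the vectors prescribed in the defining asymptotics of $\jostk_{\pm}(x;\pm|k|,\tau)$, the sign choice in $\pm i\eps$ pairing with $\pm|k|$ via \eqref{eq:squarerootlimit}; uniqueness of the Jost solution then gives the identity for $k \neq 0$.

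The only delicate case is the threshold $k = 0$ (equivalently $\omega = 1$), where $\lambda \to 0$ and the individual coefficients $A, B$ and $C, D$ diverge. A direct algebraic computation shows that the symmetric combinations that actually appear in \eqref{eq:xi_pm} remain finite: for instance $A + B = e^{-i\tau}$ identically in $\omega$, and $(A-B)\lambda$ has a finite limit, so $A e^{\lambda x} + B e^{-\lambda x}$ converges pointwise to an explicit function exhibiting at most linear growth in $x$. This limit agrees with $\jostk_\pm(x;0,\tau)$, whose linearly-growing behavior at $k = 0$ reflects precisely the threshold resonance phenomenon central to Theorem \ref{maintheorem}; this cancellation of diverging coefficients is the main technical subtlety in the proof.
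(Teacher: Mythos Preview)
Your proof is correct and follows essentially the same route as the paper: substitute the branch limit $\lambda(\sqrt{1+k^2}\pm i\eps)\to \mp i|k|$ into the explicit formulas of Proposition~\ref{prop:xi_pm} and read off the result. The paper's own argument is in fact just the one-line hint that this identity suffices; your write-up is considerably more detailed, and your treatment of the threshold case $k=0$ (verifying $A+B=e^{-i\tau}$ identically and that $(A-B)\lambda$ has a finite limit, so the apparent $1/\lambda$ divergences cancel) goes beyond what the paper spells out.
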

In order to prove the proposition, one uses the identity
\begin{align*}
      \lim_{\eps\to 0^{+}}\lambda(\sqrt{1+k^2}\pm i\eps) = \mp i\vert k\vert ,
\end{align*}
which follows from \eqref{eq:squarerootlimit} and the substitution $\omega=\sqrt{1+k^2}$. Henceforth we assume that $k\geq 0$. An immediate consequence of this is that the limiting resolvents $\Resk_{\pm}(k,\tau)$ may be expressed in terms of the Jost solutions $\jostk_{\pm}(x;k,\tau)$
 \begin{subequations}\label{eq:R_pm_exp}
\begin{align}
    \Resk_{\pm}(k,\tau)(x,y) &= \begin{cases}
    \frac{-i}{\varphi(\pm k,\tau)}\begin{pmatrix} \xi_{-,1}(x; \pm k,\tau)\xi_{+,2}(y; \pm k,\tau) & \xi_{-,1}(x; \pm k,\tau)\xi_{+,1}(y; \pm k,\tau)\\ \xi_{-,2}(x; \pm k,\tau)\xi_{+,2}(y; \pm k,\tau) & \xi_{-,2}(x; \pm k,\tau)\xi_{+,1}(y; \pm k,\tau)\end{pmatrix} & x < y\\
    \frac{-i}{\varphi(\pm k,\tau)}\begin{pmatrix} \xi_{+,1}(x; \pm k,\tau)\xi_{-,2}(y; \pm k,\tau) & \xi_{+,1}(x; \pm k,\tau)\xi_{-,1}(y; \pm k,\tau)\\ \xi_{+,2}(x; \pm k,\tau)\xi_{-,2}(y; \pm k,\tau) & \xi_{+,2}(x; \pm k,\tau)\xi_{-,1}(y; \pm k,\tau)\end{pmatrix} & x > y
    \end{cases}\\
    &=\begin{cases} \frac{-i}{\varphi(\pm k,\tau)}\xi_{-}(x; \pm k,\tau)\xi_{+}(y; \pm k,\tau)^\top\sigma_1 & x < y\\
    \frac{-i}{\varphi(\pm k,\tau)}\xi_{+}(x; \pm k,\tau)\xi_{-}(y; \pm k,\tau)^\top\sigma_1 & x > y
    \end{cases}
\end{align}
\end{subequations}
For explicitly formulae of the resolvent kernel, see Appendix \ref{ap:resolvent_explicit}.
\section{Scattering Theory}\label{sec:scattering}

In this section we construct the scattering theory for $\D(\tau)$ and find explicit formulas for the transmission and reflection coefficients. For a fixed $\tau$, These coefficients relate the Jost solutions $\xi _{\pm} (x;k)$ to $\xi _{\pm} (x;-k)$, in the following way:
\begin{lemma}\label{lem:RT}
Let $k \in \R \setminus \{0\}$ be such that $\varphi(k,\tau)\neq 0$ (see \eqref{eq:varphi_def}). Then, there exist unique coefficients $T_1(k,\tau), R_1(k,\tau), T_2(k,\tau)$, and $R_2(k,\tau)$ such that 
\begin{subequations}\label{eq:algebraicrelations}
\begin{align} 
    \xi_{+}(x;-k,\tau) &= T_1(k,\tau)\xi_{-}(x;k,\tau)-R_1(k,\tau)\xi_{+}(x;k,\tau)\ , \\
    \xi_{-}(x;-k,\tau) &= T_2(k,\tau)\xi_{+}(x;k,\tau)-R_2(k,\tau)\xi_{-}(x;k,\tau).
\end{align}
\end{subequations}
We call the $T_j$'s and the $R_j$'s the transmission and reflection coefficients, respectively.
\end{lemma}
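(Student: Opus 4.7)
The plan is to reduce the claim to a linear-algebra statement about the two-dimensional solution space of the homogeneous system $(\D(\tau) - \sqrt{1+k^2})\xi = 0$. Rewritten as a first-order linear ODE in $\C^2$, namely $\partial_x \xi = M(x; \sqrt{1+k^2}, \tau)\xi$ with $M$ as in \eqref{eq:Mxktau}, this equation has a two-dimensional solution space. Since $\sqrt{1+(-k)^2} = \sqrt{1+k^2}$, all four functions $\xi_\pm(\cdot; \pm k, \tau)$ lie in this common solution space.

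The key step is to verify that $\{\xi_+(\cdot; k, \tau),\, \xi_-(\cdot; k, \tau)\}$ forms a basis for this solution space whenever $\varphi(k, \tau) \neq 0$. Let $W(x) = \det\left[\xi_+(x; k, \tau) \mid \xi_-(x; k, \tau)\right]$ denote the Wronskian of this pair. Because $\tr M(x;\omega,\tau) \equiv 0$, Liouville's formula yields $W(x) \equiv W(0)$ for every $x \in \R$. A direct evaluation at $x = 0$ using the explicit formulas \eqref{eq:xi_pm} --- equivalently, taking the $\varepsilon \to 0^+$ limit of the identity $\det X(0;\omega) = \omega(e^{-i\tau}-1) - i\lambda(\omega)(e^{-i\tau}+1)$ derived in the proof of Proposition \ref{prop:resolvent}, together with the limiting procedure from Proposition \ref{prop:jostkdef} --- yields $W(0) = -\varphi(k, \tau)$. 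Under the hypothesis $\varphi(k, \tau) \neq 0$, the two Jost solutions are therefore linearly independent and span the full solution space.

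Given the basis property, both $\xi_+(\cdot; -k, \tau)$ and $\xi_-(\cdot; -k, \tau)$, being elements of the same two-dimensional solution space, admit unique expansions in the basis $\{\xi_+(\cdot; k, \tau),\, \xi_-(\cdot; k, \tau)\}$. Labeling the expansion coefficients with the sign convention prescribed in \eqref{eq:algebraicrelations} defines $T_1, R_1, T_2, R_2$ uniquely; the specific choice of signs (plus for the ``transmitted'' term, minus for the ``reflected'' term) is adopted so that the $T_j$ and $R_j$ reproduce the standard form of scattering coefficients when one matches asymptotic behavior as $x \to \pm\infty$. There is no substantial analytic obstacle, since the argument reduces to the fact that two linearly independent vectors span a two-dimensional space; the only place requiring care is the Wronskian calculation itself, where the limiting values of $\lambda(\omega)$ from Lemma \ref{lem:lam-branch} must be correctly tracked via Proposition \ref{prop:jostkdef} in order to identify $W(0) = -\varphi(k, \tau)$.
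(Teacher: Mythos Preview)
Your proof is correct and follows essentially the same approach as the paper: both arguments observe that all four Jost solutions $\xi_\pm(\cdot;\pm k,\tau)$ lie in the two-dimensional solution space of $(\D(\tau)-\sqrt{1+k^2})\xi=0$, and then verify that $\{\xi_+(\cdot;k,\tau),\xi_-(\cdot;k,\tau)\}$ is a basis by computing the Wronskian at $x=0$ and identifying it (up to sign) with $\varphi(k,\tau)$. Your write-up is slightly more detailed in invoking Liouville's formula and the limiting procedure of Proposition~\ref{prop:jostkdef}, but the substance is identical.
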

\begin{proof}
The existence and uniqueness of the coefficients $T_{1},T_{2}, R_{1},R_{2}$ follow immediately from two facts
\begin{enumerate}
    \item $\xi_{\pm}(x;\pm k,\tau)$ solve the same differential equation as $\xi_{\pm}(x;\pm k,\tau)$.
    \item For $k\neq 0$, the Jost solutions $\jostk_{+}(x;k,\tau)$ and $\jostk_{-}(x;k,\tau)$ are linearly independent.
\end{enumerate}
The first is clear as the equation is preserved with $k\to -k$. One may observe the second by a direct calculation of the Wronskian at $x=0$:
\begin{align*}
    W[\jostk_{+}(x;k,\tau),\jostk_{-}(x;k,\tau)] &=\varphi(k,\tau)\\
    &=k(e^{-i\tau}+1) - \sqrt{1+k^2}(e^{-i\tau}-1)\neq 0 .
\end{align*}

\end{proof}
We can even go further and solve for the coefficients:
\begin{prop}
For every $k\in \R \setminus \{0\}$ and $\tau \in (0,2\pi)$,
\begin{subequations}\label{eq:TjRj_relations}
\begin{align}
    T(k,\tau)  : \, = T_1(k,\tau)&= e^{-i\tau}T_2(k,\tau) \ ,\\
    \frac{R_1(k,\tau)}{T_1(k,\tau)}&= \frac{-1}{e^{-i\tau}}\frac{R_2(-k,\tau)}{T_2(-k,\tau)}\ ,
\end{align}
\end{subequations}
where 
$$  T(k,\tau)  = \frac{2k}{k(e^{i\tau}+1)+\sqrt{1+k^2}(e^{i\tau}-1)}=\frac{2e^{-i\tau}}{\varphi(k,\tau)}\, .
  $$
\end{prop}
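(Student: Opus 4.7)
The plan is to derive the four coefficients by substituting the explicit expressions for $\xi_\pm(x;\pm k,\tau)$ from Proposition \ref{prop:xi_pm} (with $\lambda$ specialized via the limits of Proposition \ref{prop:jostkdef}) into the algebraic relations \eqref{eq:algebraicrelations}, and then matching the coefficients of the two linearly independent exponentials $e^{\pm ikx}$ on either side of $x=0$. By symmetry it suffices to treat $k>0$.

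By Proposition \ref{prop:jostkdef} together with the identity $i\lambda(\sqrt{1+k^2}\pm i0)=\pm k$, the formulas of Proposition \ref{prop:xi_pm} give, for $x>0$,
\begin{equation*}
\xi_+(x;\pm k,\tau) = e^{\pm ikx}\begin{pmatrix}e^{-i\tau}\\ \sqrt{1+k^2}\pm k\end{pmatrix},
\end{equation*}
while $\xi_-(x;k,\tau)$ on $x>0$ is a linear combination of these same two exponential modes, with coefficients $C(k,\tau)$ and $D(k,\tau)$ obtained by specializing \eqref{eq:ABCD}. Substituting into the first relation of \eqref{eq:algebraicrelations} and matching the $e^{\pm ikx}$ coefficients yields
\begin{equation*}
T_1(k,\tau)=\frac{1}{C(k,\tau)},\qquad R_1(k,\tau)=\frac{D(k,\tau)}{C(k,\tau)}.
\end{equation*}
The identical procedure carried out on $x<0$, using the expansion of $\xi_+(x;k,\tau)$ there with coefficients $A(k,\tau)$ and $B(k,\tau)$ from \eqref{eq:ABCD}, yields $T_2=1/B(k,\tau)$ and $R_2=A(k,\tau)/B(k,\tau)$.

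With these formulas in hand, both identities in \eqref{eq:TjRj_relations} become short algebraic checks directly from \eqref{eq:ABCD}. The relation $B(k,\tau)=e^{-i\tau}C(k,\tau)$, evident by inspection of \eqref{eq:ABCD}, delivers $T_1=e^{-i\tau}T_2$, and writing out $1/C(k,\tau)$ produces the closed-form expression for $T(k,\tau)$, whose denominator is a scalar multiple of $\varphi(k,\tau)$. For the second identity, observe that $R_1/T_1=D(k,\tau)$ and $R_2/T_2=A(k,\tau)$, so the claim reduces to the one-line verification $D(k,\tau)=-e^{i\tau}A(-k,\tau)$, where $A(-k,\tau)$ is computed from \eqref{eq:ABCD} using the opposite branch $i\lambda\mapsto -k$ dictated by Proposition \ref{prop:jostkdef}. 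The only place one has to be attentive is in tracking this sign of $i\lambda$ correctly under $k\mapsto -k$; once that bookkeeping is in place, no real obstacle remains and the proof is essentially a substitution.
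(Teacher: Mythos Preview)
Your argument is correct, but it follows a different route from the paper. The paper computes $T_j$ and $R_j$ via Cramer's rule, expressing each coefficient as a ratio of Wronskians (e.g., $T_1 = W[\xi_+(\cdot;-k),\xi_+(\cdot;k)]/W[\xi_-(\cdot;k),\xi_+(\cdot;k)]$), evaluates those Wronskians at $x=0$, and reads off the relations. You instead exploit the piecewise-constant structure directly: on each half-line the Jost solutions are explicit linear combinations of $e^{\pm ikx}$ with the coefficients $A,B,C,D$ from \eqref{eq:ABCD}, so matching exponential modes yields $T_1=1/C$, $R_1=D/C$, $T_2=1/B$, $R_2=A/B$, after which both identities in \eqref{eq:TjRj_relations} collapse to the elementary observations $B=e^{-i\tau}C$ and $D(k,\tau)=-e^{i\tau}A(-k,\tau)$. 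Your approach is slightly more elementary and makes the dependence on the explicit coefficients of Proposition~\ref{prop:xi_pm} completely transparent; the paper's Wronskian calculation is the standard scattering-theory device and would carry over unchanged to a general (non-piecewise-constant) domain wall, whereas your coefficient-matching relies on having closed-form Jost solutions. For this particular operator both are equally short.
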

\begin{proof}
 Applying Cramer's rule to the definition of the transmition and reflection coefficients, and having already computed the relevant Wronskians in the proof of Lemma \ref{lem:RT}, we have
\begin{align*}
    T_1(k,\tau) &= \frac{W[\xi_{+}(x;-k,\tau),\xi_{+}(x;k,\tau)]}{W[\xi_{-}(x;k,\tau),\xi_{+}(x;k,\tau)]}=\frac{2e^{-i\tau}k}{\varphi(k,\tau)} \ , \\
    R_1(k,\tau) &= -\frac{W[\xi_{-}(x;k,\tau),\xi_{+}(x;-k,\tau)]}{W[\xi_{-}(x;k,\tau),\xi_{+}(x;k,\tau)]} = \frac{W[\xi_{+}(x;-k,\tau),\xi_{-}(x;k,\tau)]}{\varphi(k,\tau)} \ , \\
    T_2(k,\tau) &= \frac{W[\xi_{-}(x;-k,\tau),\xi_{-}(x;k,\tau)]}{W[\xi_{+}(x;k,\tau),\xi_{-}(x;k,\tau)]}=\frac{2k}{\varphi(k,\tau)}\ , \\
    R_2(k,\tau) &= -\frac{W[\xi_{+}(x;k,\tau),\xi_{-}(x;-k,\tau)]}{W[\xi_{+}(x;k,\tau),\xi_{-}(x;k,\tau)]}=\frac{W[\xi_{+}(x;k,\tau),\xi_{-}(x;-k,\tau)]}{\varphi(k,\tau)}.
\end{align*}
This leads to the relations \eqref{eq:TjRj_relations}. 

The explicit formula for the transmission coefficient is given by
\begin{align*}
    T(k,\tau)
    &= \frac{2k}{k(e^{i\tau}+1)+\sqrt{1+k^2}(e^{i\tau}-1)}.
\end{align*}
\end{proof}
Moreover, the square magnitude and its derivatives are given by
\begin{align}
    \vert T(k,\tau)\vert^2 &= \frac{k^2}{k^2+\sin^2(\tau/2)}\ , \label{eq:tk}\\
    \partial_k\vert T(k,\tau)\vert^2  &= \frac{2k\sin^2(\tau/2)}{(k^2+\sin^2(\tau/2))^2}\ , \label{eq:dtk}\\
      \partial_k^2\vert T(k,\tau)\vert^2  &= \frac{2\sin^2(\tau/2)(\sin^2(\tau/2)-3k^2)}{(k^2+\sin^2(\tau/2))^3}. \label{eq:d2tk}
\end{align}
From this we have the estimates
\begin{prop}\label{lem:TransmissionBounds}
We have the following bounds on $\partial_k^j\vert T(k,\tau)\vert^2$ for $j=0,1,2$.
\begin{subequations}
    \begin{align*}
    \vert T(k,\tau)\vert^2  &\leq \min\left(1,\frac{k^2}{k^2+\sin^2(\tau/2)}\right)\ , \\
    \partial_k\vert T(k,\tau)\vert^2  &\leq \min\left(\frac{1}{k},\frac{2k}{k^2+\sin^2(\tau/2)},\frac{2}{k^3}\right)\ , \\
    \partial_k^2\vert T(k,\tau)\vert^2  &\leq \min\left(\frac{3}{k^2},\frac{8}{k^2+\sin^2(\tau/2)},\frac{7}{k^4}\right).
\end{align*}
\end{subequations}
\end{prop}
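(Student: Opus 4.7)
The plan is to read off all three inequalities directly from the closed-form expressions \eqref{eq:tk}, \eqref{eq:dtk}, and \eqref{eq:d2tk}. The argument rests on three elementary facts: (i) $\sin^2(\tau/2) \in [0,1]$; (ii) $k^2 + \sin^2(\tau/2) \geq \max(k^2, \sin^2(\tau/2))$; and (iii) the AM--GM inequality $(k^2 + \sin^2(\tau/2))^2 \geq 4 k^2 \sin^2(\tau/2)$, which will be applied repeatedly to the denominators.

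For $\vert T(k,\tau)\vert^2$, the bound by $1$ is immediate from \eqref{eq:tk} since the numerator is dominated by the denominator, while the bound $k^2/(k^2 + \sin^2(\tau/2))$ is just \eqref{eq:tk} itself. For $\partial_k \vert T\vert^2$, I would factor
$$\frac{2k \sin^2(\tau/2)}{(k^2 + \sin^2(\tau/2))^2} = \frac{2k}{k^2 + \sin^2(\tau/2)} \cdot \frac{\sin^2(\tau/2)}{k^2 + \sin^2(\tau/2)},$$
and observe that the second factor lies in $[0,1]$, producing the middle bound $2k/(k^2+\sin^2(\tau/2))$. Applying AM--GM to the denominator then yields $\leq 1/(2k) \leq 1/k$, and the cruder estimate $(k^2 + \sin^2(\tau/2))^2 \geq k^4$ together with $\sin^2(\tau/2) \leq 1$ delivers the remaining $2/k^3$.

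For $\partial_k^2 \vert T \vert^2$, the numerator in \eqref{eq:d2tk} changes sign, so I would establish the stronger pointwise bound on $\bigl|\partial_k^2\vert T\vert^2\bigr|$, from which the stated one-sided inequality follows trivially. Using the triangle inequality $|\sin^2(\tau/2) - 3k^2| \leq 3(k^2 + \sin^2(\tau/2))$ in \eqref{eq:d2tk} gives the master estimate
$$\bigl|\partial_k^2 \vert T \vert^2\bigr| \leq \frac{6\sin^2(\tau/2)}{(k^2 + \sin^2(\tau/2))^2}.$$
From this, the three claimed bounds follow by exactly the three tactics used in the first derivative: dropping the factor $\sin^2(\tau/2)/(k^2+\sin^2(\tau/2)) \leq 1$ yields $6/(k^2 + \sin^2(\tau/2)) \leq 8/(k^2 + \sin^2(\tau/2))$; AM--GM on the denominator yields $3/(2k^2) \leq 3/k^2$; and dominating by $k^4$ alone in the denominator with $\sin^2(\tau/2) \leq 1$ yields $6/k^4 \leq 7/k^4$.

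There is no essential obstacle — the entire proof is a routine manipulation of explicit rational functions — so the only care required is in tracking signs in the second-derivative formula and in applying the triangle inequality before extracting the master bound.
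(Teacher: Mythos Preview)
Your proposal is correct and follows exactly the approach indicated in the paper: both read the bounds directly from the explicit formulas \eqref{eq:tk}--\eqref{eq:d2tk} using only the elementary facts $\sin^2(\tau/2)\in[0,1]$ and nonnegativity of $k^2,k^4$. Your write-up simply makes explicit the routine manipulations (AM--GM on the denominator, triangle inequality on the sign-changing numerator) that the paper compresses into the phrase ``direct computation and elementary bounds.''
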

\begin{proof}
    By direct computation and elementary bounds $\sin^2 (\theta) \in [0,1]$ and $k^2, k^4 \geq 0$. 
\end{proof}
We now state a proposition which gives several alternative ways to calculate the difference across the continuous spectrum of the two operators defined by the limiting absorption principle above.
\begin{prop}[Jump in the resolvent kernel across the real axis]\label{prop:limAbs}
For any $k\in\R$
\begin{align*}
   & \Resk_+(k,\tau)(x,y)-\Resk_-(k,\tau)(x,y)\\
   &\qquad  =\frac{\vert T(k,\tau)\vert^2}{2i e^{-i\tau}k}\left(\xi_{+}(x;k,\tau)\xi_{+}(y;-k,\tau)^\top\sigma_1+e^{-i\tau}\xi_{-}(x;k,\tau)\xi_{-}(y;-k,\tau)^\top\sigma_1\right)\\
\end{align*}
\end{prop}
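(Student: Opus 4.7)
The plan is to compute the difference $\Resk_+(k,\tau)(x,y) - \Resk_-(k,\tau)(x,y)$ directly from the piecewise kernel formulas \eqref{eq:R_pm_exp} and reshape the result using the scattering relations \eqref{eq:algebraicrelations} into the claimed bilinear form. I would treat the regions $x<y$ and $x>y$ separately and check that both yield the same expression.

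For $x<y$, the plan is to substitute
\[
\xi_+(y;k,\tau) = T_1(-k,\tau)\xi_-(y;-k,\tau) - R_1(-k,\tau)\xi_+(y;-k,\tau),
\]
obtained from \eqref{eq:algebraicrelations} with $k\mapsto-k$, into the formula for $\Resk_+$, and
\[
\xi_-(x;-k,\tau) = T_2(k,\tau)\xi_+(x;k,\tau) - R_2(k,\tau)\xi_-(x;k,\tau)
\]
into the formula for $\Resk_-$. The difference then expands into four bilinear forms in $\xi_\pm(x;k,\tau)\,\xi_\pm(y;-k,\tau)^{\top}\sigma_1$. The two ``diagonal'' terms $\xi_+\xi_+^{\top}$ and $\xi_-\xi_-^{\top}$ are precisely those appearing on the right-hand side of the proposition. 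The two cross terms are both proportional to $\xi_-(x;k,\tau)\xi_+(y;-k,\tau)^{\top}\sigma_1$, with combined coefficient $iR_1(-k,\tau)/\varphi(k,\tau) - iR_2(k,\tau)/\varphi(-k,\tau)$. This must vanish: the Cramer's rule expressions for $R_1,R_2$ in the derivation of \eqref{eq:TjRj_relations} show that $R_1(-k,\tau)\varphi(-k,\tau)$ and $R_2(k,\tau)\varphi(k,\tau)$ equal the same Wronskian $W[\xi_+(\cdot;k,\tau),\xi_-(\cdot;-k,\tau)]$, so the cross terms cancel exactly.

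It remains to simplify the coefficients of the surviving diagonal terms. Using $T_1(k,\tau)\varphi(k,\tau)=2e^{-i\tau}k$ and $T_2(k,\tau)\varphi(k,\tau)=2k$, these coefficients become $2ik/[\varphi(k,\tau)\varphi(-k,\tau)]$ in front of $\xi_+(x;k,\tau)\xi_+(y;-k,\tau)^{\top}\sigma_1$ and $2ie^{-i\tau}k/[\varphi(k,\tau)\varphi(-k,\tau)]$ in front of $\xi_-(x;k,\tau)\xi_-(y;-k,\tau)^{\top}\sigma_1$. A short direct computation from \eqref{eq:varphi_def} yields the identities $\varphi(-k,\tau) = -e^{-i\tau}\,\overline{\varphi(k,\tau)}$ and $|\varphi(k,\tau)|^2 = 4(k^2+\sin^2(\tau/2))$, whence $\varphi(k,\tau)\varphi(-k,\tau) = -4e^{-i\tau}(k^2+\sin^2(\tau/2))$; combined with $|T(k,\tau)|^2 = k^2/(k^2+\sin^2(\tau/2))$ from \eqref{eq:tk}, this produces exactly the coefficients $|T(k,\tau)|^2/(2ie^{-i\tau}k)$ and $e^{-i\tau}|T(k,\tau)|^2/(2ie^{-i\tau}k)$ stated in the proposition. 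The case $x>y$ is handled by the symmetric substitutions, inserting $\xi_-(y;k,\tau)$ into $\Resk_+$ and $\xi_+(x;-k,\tau)$ into $\Resk_-$; the sign identities $T_j(k,\tau)\varphi(k,\tau) = -T_j(-k,\tau)\varphi(-k,\tau)$ and the analogous cross-term cancellation ensure the same final formula. The main obstacle is purely algebraic bookkeeping — tracking four bilinear terms through two substitutions, checking the cross-term cancellation with the correct signs, and reconciling the $x<y$ and $x>y$ pieces — with no new analytic input required beyond the scattering relations of Section \ref{sec:scattering} and the resolvent kernel from Section \ref{sec:Resolvent1}.
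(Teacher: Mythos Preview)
Your proposal is correct and follows essentially the same approach as the paper: substitute the scattering relations \eqref{eq:algebraicrelations} into the piecewise kernel formulas \eqref{eq:R_pm_exp}, expand into four bilinear terms, cancel the cross terms via the Wronskian identities behind \eqref{eq:TjRj_relations}, and simplify the surviving coefficients. The paper begins with the region $x>y$ and leaves the cross-term cancellation and the identity $\varphi(k,\tau)\varphi(-k,\tau)=-4e^{-i\tau}(k^2+\sin^2(\tau/2))$ implicit, whereas you spell these out explicitly, but the underlying argument is identical.
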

\begin{proof}
We start with $x > y$. Using \eqref{eq:algebraicrelations} along with the properties of the transmission and reflection coefficients we find
\begin{align*}
    &\Resk_+(k,\tau)(x,y)-\Resk_-(k,\tau)(x,y)\\
    =& \frac{-i}{\varphi(k,\tau)}\xi_{+}(x;k,\tau)\xi_{-}(y;k,\tau)^\top \sigma_1-\frac{-i}{\varphi(-k,\tau)}\xi_{+}(x;-k,\tau)\xi_{-}(y;-k,\tau)^\top \sigma_1\\
    =&\frac{-iT(k,\tau)}{2e^{-i\tau}k}\xi_{+}(x;k,\tau)\left(e^{i\tau}T(-k,\tau)\xi_{+}(x;-k,\tau))-R_2(-k,\tau)\xi_{-}(x;-k,\tau)\right)^\top\sigma_1\\
    -&\frac{iT(-k,\tau)}{2e^{-i\tau}k}\left(T(k,\tau)\xi_{-}(x;k,\tau)-R_1(k,\tau)\xi_{+}(x;k,\tau)\right)\xi_-(y,-k)^\top\sigma_1\\
    =&\frac{\vert T(k,\tau)\vert^2}{2i e^{-i\tau}k}\left(\xi_{+}(x;k,\tau)\xi_{+}(y;-k,\tau)^\top\sigma_1+e^{-i\tau}\xi_{-}(x;k,\tau)\xi_{-}(y;-k,\tau)^\top\sigma_1\right)
\end{align*}
The case where $x < y$ is similar. 
\end{proof}

Now that we have developed the requisite scattering theory we may prove the main theorem. 

\section{Proof of  Theorem \ref{maintheorem}, the main theorem}\label{sec:mainproof}
We begin by remarking that it suffices to prove the following weaker bound which, in contrast to \eqref{eq:decayest1}
(equivalently \eqref{eq:decayest2}), is singular as $t\downarrow0$:
\begin{equation}\label{eq:decayest3}
    \|\langle x\rangle^{-2}e^{-i\D(\tau)t}P_{ac}(\D(\tau))\alpha_0\|_{L^\infty(\R)}\leq C_\epsilon\frac{1}{t^{1/2}}\frac{1}{1+\sin^2(\tau/2) t}
    \|\langle x\rangle^{2} \langle \D(\tau)\rangle^{3/2+\epsilon}\alpha_0\|_{L^1(\R)} .
\end{equation}
Indeed, from the expression for the  resolvent kernel in Appendix \ref{ap:resolvent_explicit}, we  note that for all $x$ and $y$, the resolvent kernel satisfies: $|k\Resk_{\pm}(k,\tau)(x,y)|\le C_\tau$, for some constant $C_\tau$. Therefore, directly from the representation of $\alpha(t,x)$ as an oscillatory integral of the data $\alpha_0$ in \eqref{eq:stones}, we have:
 \begin{equation}
 \|\alpha(t,x)\|_{L^\infty(\R_x)}\lesssim \|\alpha_0(y)\|_{L^1(\R_y)}\quad \textrm{for all $t\in\R$}.
\label{eq:linfinty} \end{equation}
We next derive an estimate reflecting \eqref{eq:decayest3} for $t$ bounded away from zero and  \eqref{eq:linfinty} for $t$ near zero by interpolation. Denote $$a=\|\langle x\rangle^{-2}e^{-i\D(\tau)t}P_{ac}(\D(\tau))f\|_{L^\infty(\R)} \, , \qquad b\sim
    \|\langle x\rangle^{2} \langle \D(\tau)\rangle^{3/2+\epsilon}f\|_{L^1(\R)} 
 \, , $$
 $$c\sim \frac{1}{t^{1/2}}\frac{1}{1+\sin^2(\tau/2) t}
    \|\langle x\rangle^{2} \langle \D(\tau)\rangle^{3/2+\epsilon}f\|_{L^1(\R)} 
 \, .$$ Then, using the bounds 
 \eqref{eq:decayest3} and \eqref{eq:linfinty}, we have that $a\leq b$ and $a\leq c$. A bound which is non-singular as $t\to0$ now follows from the algebraic interpolation inequality \eqref{eq:abc-ineq}.
We therefore focus on proving the decay-estimate \eqref{eq:decayest3}.

We now embark on the proof. Our strategy is to decompose the representation of the evolution operator, given in \eqref{eq:stones}, into   ``high $k$'' (or ``high energy'') and ``low $k$'' (or ``low energy'') integrals. To do so, fix $k_0 > 0$ and define a smooth cutoff function $\chi$, defined on $\R$, such that
\begin{equation}\label{eq:cutoff}
    \chi(k) = \begin{cases}
    1 & \textrm{if}\ k < k_0 \\
    0 & \textrm{if}\ k > 2k_0\ .
    \end{cases}
\end{equation}
Then, for every $\alpha_0 \in L^2 (\R ;\C ^2)$, we have from \eqref{eq:stones}) that

\begin{align*}
 \alpha(t,x) =  e^{-i\D(\tau)t} P(\D(\tau))\langle \D(\tau)\rangle^{-3/2-\eps} \alpha _0 &:= \alpha_l(t,x) + \alpha_h(t,x)\ \, , \numberthis \label{eq:highlow}
\end{align*}
where
\begin{align}
   \alpha_l(t,x) &:=\frac{1}{2\pi i}\int_{0}^{\infty}e^{-i\sqrt{1+k^2} t}\chi(k)\left[\left(\Resk_{+}(k,\tau)-\Resk_-(k,\tau)\right)\alpha_0\right] (x)\, \langle k\rangle^{-3/2-\eps}\frac{k}{\sqrt{1+k^2}} \, dk \label{eq:low} \, ,\\
   \alpha_h(t,x) &:=\frac{1}{2\pi i}\int_{0}^{\infty}e^{-i\sqrt{1+k^2} t}(1-\chi(k))\left[\left(\Resk_{+}(k,\tau)-\Resk_-(k,\tau)\right)\alpha_0\right] (x)\, \langle k\rangle^{-3/2-\eps}\frac{k}{\sqrt{1+k^2}} \, dk \, . \label{eq:high}
\end{align}

Note that $\alpha(t,x)$ in \eqref{eq:highlow} is the solution of the time-dependent Dirac equation \eqref{eq:Diraceq} with the smoothed initial data $P(\D(\tau))\langle \D(\tau)\rangle^{-3/2-\eps} \alpha _0$. The functions $\alpha_l(t,x)$ and $\alpha_l(t,x)$ are, respectively, the low $k$ and high $k$ components of $\alpha(t,x)$. We estimate these separately in the following sections.

\subsection{High Energy Estimates}\label{sec:high_energy}
In this section we prove high-energy the following $\tau$-independent bounds on $\alpha_h(t,x)$, defined in \eqref{eq:highlow}.
\begin{theorem} \label{thm:high_energy}
 There exist positive constants $C_{h,1}$ and $C_{h,2}$, such that for any $\tau\in[0,2\pi)$ and $t\ge0$
 \begin{align}
     \|\langle x\rangle^{-1}\alpha_{h}(t,x)\|_{L^{\infty}(\R_x)}&\leq C_{h,1}\frac{1}{| t|^{3/2}}\|\langle y\rangle \alpha_0(y)\|_{L^{1}(\R_y)}\label{eq:hi-wt} \\
     \|\alpha_{h}(t,x)\|_{L^{\infty}(\R_x)}&\leq C_{h,2}
     \frac{1}{|t|^{1/2}}\|\alpha_0(y)\|_{L^1(\R_y)}\label{eq:hi-no-wt}
 \end{align}
\end{theorem}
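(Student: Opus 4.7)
The starting point is the expression for the jump of the resolvent from Proposition \ref{prop:limAbs}. I insert it into \eqref{eq:high}; the $1/k$ in Proposition \ref{prop:limAbs} cancels against $k/\sqrt{1+k^2}$ in \eqref{eq:high}, leaving the uniformly bounded amplitude $|T(k,\tau)|^2/\sqrt{1+k^2}$ (bounded by $1$ by Proposition \ref{lem:TransmissionBounds}). Next, using the explicit Jost solutions from Proposition \ref{prop:xi_pm} (now with $\lambda(\sqrt{1+k^2}\pm i0)=\mp ik$), each factor $\xi_\pm(x;\pm k)\xi_\pm(y;\mp k)^\top$ decomposes into a finite sum of terms $a(k,\tau)e^{i(\epsilon_1 kx + \epsilon_2 ky)}$ with $\epsilon_1,\epsilon_2\in\{\pm 1\}$ and $a(k,\tau)$ smooth in $k$ and $C^2$-bounded uniformly in $\tau$ for $k\ge k_0$ (the only $\tau$-singular factors, namely $1/\lambda$ in $A,B,C,D$, are not dangerous since they appear with coefficients that vanish as $\lambda\to 0$, or are absorbed by $|T(k,\tau)|^2$). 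Thus bounding $\alpha_h$ reduces to estimating, uniformly in $x,y,\tau$, oscillatory integrals of the form
\begin{equation*}
I_{\epsilon_1,\epsilon_2}(t,x,y) = \int_0^\infty e^{i\phi(k)}\, g(k;\tau)(1-\chi(k))\langle k\rangle^{-3/2-\eps}\,dk,\qquad \phi(k) := \epsilon_1 kx + \epsilon_2 ky - \sqrt{1+k^2}\,t,
\end{equation*}
with $g, g', g''$ uniformly bounded in $\tau$ for $k\ge k_0$, and then integrating the result against $\alpha_0(y)$.

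\textbf{Unweighted bound \eqref{eq:hi-no-wt}.} I dyadically decompose $[k_0,\infty) = \bigcup_{j\ge 0}[2^j k_0,2^{j+1}k_0]$. Since $\phi''(k)=-t(1+k^2)^{-3/2}$, the van der Corput lemma (Lemma \ref{lemma:vandercorput}) applied on the $j$-th piece gives a bound of order $t^{-1/2}(1+2^{2j}k_0^2)^{3/4}$ times the $L^\infty$ and $L^1$ norms of the amplitude and its derivative on that piece. These are of size $2^{-j(3/2+\eps)}$, so the $j$-th contribution is $t^{-1/2}\cdot 2^{3j/2}\cdot 2^{-j(3/2+\eps)} = t^{-1/2}2^{-j\eps}$, which sums to a constant multiple of $t^{-1/2}$. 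Passing the supremum over $x$ inside the $y$-integral against $\alpha_0$ yields \eqref{eq:hi-no-wt}.

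\textbf{Weighted bound \eqref{eq:hi-wt}.} Here I exploit the weights by performing stationary-phase/integration-by-parts analysis adapted to the (possible) stationary point $k_\ast$ satisfying $k_\ast/\sqrt{1+k_\ast^2} = (\epsilon_1 x + \epsilon_2 y)/t$. If $|\epsilon_1 x + \epsilon_2 y|\ge 2t$ then $|\phi'(k)|\gtrsim t + |x|+|y|$ on all of $[k_0,\infty)$; two non-stationary integrations by parts using $e^{i\phi} = (i\phi')^{-1}\partial_k e^{i\phi}$ supply arbitrary polynomial decay in $t + |x| + |y|$, easily majorized by $t^{-3/2}\langle x\rangle\langle y\rangle$. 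If $|\epsilon_1 x + \epsilon_2 y|\le 2t$, I split $[k_0,\infty)$ into $\{|k-k_\ast|\le \delta(k_\ast)\}$ and its complement, with $\delta(k_\ast)\sim (1+k_\ast^2)^{3/4}t^{-1/2}$ chosen so $|\phi''|\cdot\delta^2\sim 1$. On the near region, standard stationary phase delivers $t^{-1/2}\delta \cdot \|g\langle k\rangle^{-3/2-\eps}\|_{L^\infty}$; on the far region, two integrations by parts (each producing a factor $\phi'(k)^{-1}$, controlled by $|\phi''|\cdot \delta \gtrsim (\text{const})\cdot t\,\delta /(1+k^2)^{3/2}$ via the mean-value theorem) produce the matching $t^{-3/2}$ rate. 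Summing the two regions and again dyadically in $k$ (so that the $\langle k\rangle^{-3/2-\eps}$ beats the $(1+k_\ast^2)^{3/4}$ from $\delta$), one arrives at $|I_{\epsilon_1,\epsilon_2}(t,x,y)|\lesssim t^{-3/2}\langle x\rangle\langle y\rangle$, giving \eqref{eq:hi-wt} after integration against $\alpha_0$.

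\textbf{Main obstacle.} The principal difficulty is maintaining uniformity in $\tau\in[0,2\pi]$ simultaneously with the sharp $t^{-3/2}$ rate: the Jost-solution coefficients $A,B,C,D$ contain $1/\lambda(\omega)$ factors that are singular at the threshold, and one must check that their contributions to the high-energy amplitude remain bounded (together with two $k$-derivatives) uniformly in $\tau$. This is ultimately resolved by the fact that, at the threshold, the relevant $1/\lambda$ singularities are canceled by $(1-e^{\pm i\tau})$ factors or absorbed into $|T(k,\tau)|^2$; for $k\ge k_0>0$ no such cancellation is needed, but the bookkeeping to see this explicitly, together with the stationary-phase/integration-by-parts splitting in the weighted estimate, is the most delicate part of the proof.
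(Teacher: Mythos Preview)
Your treatment of the unweighted bound \eqref{eq:hi-no-wt} is essentially the paper's: a dyadic decomposition of $[k_0,\infty)$ followed by van der Corput on each piece, with the $\langle k\rangle^{-3/2-\eps}$ weight providing summability. (The paper starts from $\Resk_\pm$ separately rather than the jump formula of Proposition~\ref{prop:limAbs}, but this is a cosmetic difference at high energy.)

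Your argument for the weighted bound \eqref{eq:hi-wt}, however, has a genuine gap. You treat the full phase $\phi(k)=\epsilon_1 kx+\epsilon_2 ky-t\sqrt{1+k^2}$ and perform a near/far splitting around its stationary point $k_\ast$. But near $k_\ast$ the phase has a \emph{nondegenerate} critical point with $|\phi''(k_\ast)|\sim t\langle k_\ast\rangle^{-3}$, and stationary phase (or van der Corput, or even the trivial length bound on your window of size $\delta\sim\langle k_\ast\rangle^{3/2}t^{-1/2}$) can deliver at best $|\phi''(k_\ast)|^{-1/2}\sim t^{-1/2}\langle k_\ast\rangle^{3/2}$. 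Your own ``$t^{-1/2}\delta$'' is $\sim t^{-1}\langle k_\ast\rangle^{3/2}$, not $t^{-3/2}$; after multiplying by the amplitude $\langle k_\ast\rangle^{-3/2-\eps}$ you still have only $t^{-1}$ from the near region. The far region may give $t^{-3/2}$, but the near region dominates and the argument stalls at $t^{-1}$. Moreover, nothing in this scheme produces the factor $|r|=|\epsilon_1 x+\epsilon_2 y|$ that the weights $\langle x\rangle\langle y\rangle$ are supposed to absorb.

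The paper's mechanism is different and is encoded in the third bound of Lemma~\ref{lemma:vandercorput}. One integrates by parts once using only the $t$-oscillation,
\[
e^{-it\sqrt{1+k^2}}=\frac{\sqrt{1+k^2}}{-itk}\,\partial_k\bigl(e^{-it\sqrt{1+k^2}}\bigr),
\]
while keeping $e^{ikr}$ as part of the amplitude. This gains a full factor $t^{-1}$ at the price of $\partial_k$ landing on $e^{ikr}$, i.e.\ a factor $|r|\le\langle x\rangle\langle y\rangle$ in the resulting $L^1$ norm $\|(\partial_{kk}+ir\partial_k)(\cdot)\|_{L^1}$. A subsequent van der Corput step supplies the remaining $t^{-1/2}$. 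Summed dyadically (Proposition~\ref{lem:bound_highERes}), this yields the stated $t^{-3/2}\langle x\rangle\langle y\rangle$ kernel bound. Your ``main obstacle'' is also misidentified: for $k\ge k_0$ the coefficients $A,B,C,D$ carry $1/\lambda\sim 1/k$, which is bounded uniformly in $\tau$; there is no $\tau$-uniformity issue at high energy. The actual obstacle is precisely the one your argument misses---how to trade the spatial weights for an extra power of $t^{-1}$.
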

\begin{proof}
To bound $\alpha_h$, the high energy part of $\alpha$, it is sufficient to bound the contributions from $\mathcal{R}_+$ and $\mathcal{R}_-$ (see \eqref{eq:cRpm})  individually.
Write
\begin{subequations}\label{eq:alpha_hpm}
\begin{equation}
  \alpha_h(t,x) =  \alpha_h^{(+)}(t,x)-\alpha_h^{(-)}(t,x)
  \end{equation}
  where
  \begin{equation}
  \alpha_h^{(\pm)}(t,x) \equiv  \frac{1}{2\pi i}\int_{0}^{\infty}e^{-i\sqrt{1+k^2} t}(1-\chi(k))\left[\mathcal{R}_{\pm}(k,\tau)\alpha_0 \right](x) \, \langle k\rangle^{-3/2-\eps}\frac{k}{\sqrt{1+k^2}} \, dk  .
  \end{equation}\label{eq:ahpm_def}
\end{subequations}

We will estimate $\alpha_h^{(+)}$ explicitly; the estimates for $\alpha_h^{(-)}$ follow analogously. The  operator $\mathcal{R}_+ (k,\tau)$, see \eqref{eq:cRpm},  acting on  functions  $\beta \in L^2 (\R;\C^2)$, can be written via its kernel representation, i.e.,
$$[\mathcal{R}_+(k,\tau)\beta ](x) = \int\limits_{\R} \mathcal{R}_+(k,\tau)(x,y)\beta(y) \, dy\ ,$$
where the resolvent kernel $\mathcal{R}_+(k,\tau)(x,y)$, is displayed in  \eqref{eq:ahpm_def}. Substitution into 
\eqref{eq:ahpm_def} yields, via the Fubini-Tonelli theorem, that for any $x\in \R$
\begin{equation}
\alpha_h^{(+)}(t, x) = \int_{\R} A_t(x,y) \alpha_0(y) dy,
\end{equation}
where 
\[ A_t(x,y) \equiv \frac{1}{2\pi i}\int_{0}^{\infty}e^{-i\sqrt{1+k^2} t}(1-\chi(k))\Resk_{+}(k,\tau)(x,y) \, \langle k\rangle^{-3/2-\eps}\frac{k}{\sqrt{1+k^2}}dk\]
 Hence, 
\begin{equation}\label{eq:h_sup_prebound}
  \|\alpha_{h}(t,x)\|_{L^{\infty}(\R_x)}  \le \sup_{z,z'\in\R} |A_t(z,z')|\times  \|\alpha_0(y)\|_{L^1(\R_y)} \, ,
    \end{equation}
    and (by inserting a $1=\langle y \rangle ^{-1} \langle y \rangle$ terms into the integrand)
 \begin{equation}\label{eq:h_sup_prebound1}
 \|\langle x\rangle^{-1}\alpha_{h}(t,x)\|_{L^{\infty}(\R_x)}  \le \sup_{z,z'\in\R} \left(\langle z\rangle^{-1}|A_t(z,z')|\langle z'\rangle^{-1}\right)\times  \|\langle y\rangle\alpha_0\|_{L^1(\R_y)} \, .
    \end{equation}   
     Theorem \ref{thm:high_energy} will follow if we can establish the following kernel bounds:
    \begin{align}
    \sup_{x,y\in\R} |A_t(x,y)| &\le C \vert t\vert^{-\frac12} \label{eq:kbound1}\\
    \sup_{x,y\in\R} \ \langle x\rangle^{-1}|A_t(x,y)|\langle y\rangle^{-1} &\le C'\vert t\vert^{-\frac32}
   \label{eq:kbound2} \end{align}
   We focus on the proof of bound \eqref{eq:kbound2} and its use in the proof of \eqref{eq:hi-wt}.  The proof of \eqref{eq:kbound1} and its application to the proof of \eqref{eq:hi-no-wt} is simpler.   
   
    $A_t(x,y)$ is defined as an integral with respect to $k\in{\rm supp}(1-\chi)=[k_0,\infty)$. To bound $A_t(x,y)$,  we express $[k_0,\infty)$
as union of overlapping intervals of exponentially increasing size: $[k_0,\infty)=\bigcup_{j\ge0}[k_0 2^j,k_0 2^{j+2}],\ j\ge0$, and use the corresponding smooth partition of unity
 \begin{equation}\label{eq:p_of_unity}
 \mathbbm{1}_{(k_0, \infty)}(x) = \sum_{j\ge0} \chi_j(x),\quad 
 \chi_j  \in C_c^{\infty} \left( [k_0 2^j,k_0 2^{j+2}]\right).
 \end{equation}
By the triangle inequality, for every $x,y\in \R$
\begin{align}\label{eq:Ij_breakup}
   \left\vert A_t(x,y)\right\vert \leq \sum_{j=0}^{\infty} I_j(x,y)\ ,
\end{align}
where
\begin{align}\label{eq:Ij_def}
    I_j (x,y) =  \left\vert \frac{1}{2\pi i}\int_{k_0}^{\infty}e^{-it\sqrt{1+k^2}}\mathcal{R}_+(k,\tau)(x,y)\langle k\rangle^{-3/2-\eps} \chi_j(k)\frac{k }{\sqrt{1+k^2} } \, dk\right\vert.
\end{align}

The kernel, $\mathcal{R}_{+}(k,\tau)(x,y)$, is a $2\times 2$ matrix whose elements are displayed in Appendix~\ref{ap:resolvent_explicit}. Based on the precise expressions for these matrix elements, $I_j$ can be written as a linear combination of expressions of the following type:
\begin{align}\label{eq:gen-form}
    \left\vert \frac{1}{2\pi i}\int_{k_0}^{\infty}e^{-it\sqrt{1+k^2}\pm ik( x\pm y)}L(k)\langle k\rangle^{-3/2-\eps}\chi_j(k)\frac{k }{\sqrt{1+k^2}} \, dk \right\vert\ ,
\end{align}
where $L(k)$ is a rational function of  $k$ and $\sqrt{1+k^2}$, which can be read off the formulae in Appendix~\ref{ap:resolvent_explicit}. To obtain  upper bounds on \eqref{eq:gen-form} we make use of the following general result on oscillatory integrals, whose proof we give in  Appendix \ref{ap:bound_highERes_pf}:

\begin{lemma}\label{lemma:vandercorput}
Let $\psi(k)$ be a smooth function supported in $ [2^j,2^{j+2}]\cup[-2^{j+2},-2^{j}]$ for $j\geq 1$, and for $j=0$ let $\psi(k)$ be supported in a neighborhood of the origin. Then, there is a constant $C>0$, which is independent of $\psi$, such that for all $r\in\R$ and all $t\in\R$:
\begin{align*}
    &\left\vert \int_{\R} e^{-it\sqrt{1+k^2}+ ikr}\psi(k)dk\right\vert \\
    &\leq C\min\left(\|\psi\|_{L^1},\vert t\vert^{-\frac12}2^{\frac{3}{2}j}\ \|\partial_k\psi\|_{L^1}, \vert t\vert^{-\frac32}2^{\frac32 j}\ \Big\|(\partial_{kk}+ir\partial_k)\left(\frac{\sqrt{1+k^2}}{k}\psi\right) \Big\|_{L^1}\right) \, .
\end{align*}
\end{lemma}

In order to bound \eqref{eq:Ij_def} via \eqref{eq:gen-form}, we apply Lemma \ref{lemma:vandercorput}, for each $j\ge0$ to
\begin{align}\label{eq:psij_highE}
   \psi= \psi_j(k) = \chi_j(k)L(k)\frac{k}{\sqrt{1+k^2}}\langle k\rangle ^{-3/2-\eps}\, .
\end{align}
Substituting \eqref{eq:psij_highE} into the  third norm expression in the upper bound of Lemma \ref{lemma:vandercorput}, we have that there is a constant, $C_\varepsilon$,  which is independent of $k$,  such that
\begin{align*}
    \Big\|\left(\partial_{kk}\pm  i( x\pm y)\partial_k \right)\left(\frac{\sqrt{1+k^2}}{k}\psi_j\right)\Big\|_{L^1} &= \left\|\left(\partial_{kk}\pm i(x\pm y)\partial_k\right)\left(\langle  k \rangle ^{-3/2-\eps} L(k)\chi_j\right)\right\|_{L^1} \\
    &\leq C_{\varepsilon} \left[\mathcal{I}_j^1 +\mathcal{I}_j ^2 + \mathcal{I}_j ^3 \right] \, ,
    \end{align*}
    where the $\mathcal{I}_j(x,y)$ are given by 
    \begin{subequations}\label{eq:Ij123}
    \begin{align}
    \mathcal{I}_j ^1 &\equiv \left\| \langle  k \rangle ^{-3/2-\eps}  \left[L''(k)\chi_j(k) +2L'(k)\chi_j(k) ' + L(k)\chi_j ''(k) \pm (x\pm y)\left(L'(k) \chi_j(k) + L (k)\chi_j '(k) \right)\right] \right\|_{L^1_k} \, , \\
    \mathcal{I}_j^2 &\equiv  \left\| k \langle k\rangle^{-7/2-\varepsilon} \left[2L'(k) \chi_j(k) + 2L(k) \chi_j '(k) \pm(x\pm y)L(k)\chi_j(k)  \right]\right\|_{L^1_k}  \, , \\
    \mathcal{I}_j^3 &\equiv  \left\| \left( \langle k\rangle^{-7/2+\varepsilon} + k^2 \langle k\rangle^{-11/2-\varepsilon} \right) L(k)\chi_j   (k)\right\|_{L^1_k} \, . 
     \end{align}
     \end{subequations}

\begin{prop}\label{lem:bound_highERes}
    There is a constant $C>0$ such that $\mathcal{I}_j^1, \mathcal{I}_j^2, \mathcal{I}_j^3\leq C 2^{-(\frac32+\varepsilon) j} \langle x \rangle \langle y \rangle$.
\end{prop}
Proposition \ref{lem:bound_highERes} is proved in  Appendix \ref{ap:bound_highERes_pf} using Van der Corput's Lemma on oscillatory integrals. 
 We finally apply the last upper bound in Lemma \ref{lemma:vandercorput} with $\psi=\psi_j$ given by \eqref{eq:psij_highE}, to bound $I_j$ defined in \eqref{eq:Ij_def}.  We obtain
\begin{align*}
    I_j (x,y) &\lesssim \left( \vert t\vert^{-\frac32}2^{\frac32 j}\right)\cdot \left[\mathcal{I}_j^1 +\mathcal{I}_j ^2 + \mathcal{I}_j ^3 \right] \lesssim \left( \vert t\vert^{-\frac32}2^{\frac32 j}\right)\cdot \left( 2^{-(\frac32+\eps) j} \langle x\rangle \langle y\rangle \right) =2^{-\eps j}  \vert t\vert^{-\frac32} \langle x\rangle \langle y\rangle .
\end{align*}
Therefore, by \eqref{eq:Ij_def} we have for any $x, y\in\R$
\begin{align*}
\langle x\rangle^{-1} |A_t(x,y)| \langle y\rangle^{-1} \le \sum_{j=0}^\infty
 \langle x\rangle^{-1} I_j(x,y) \langle y\rangle^{-1} \lesssim \vert t\vert^{-\frac32}, 
\end{align*}
since $\varepsilon$ is strictly positive. This proves the kernel bound \eqref{eq:kbound2}
 and therewith the high energy time-decay bound
 \eqref{eq:hi-wt} of Theorem \ref{thm:high_energy}.
 As remarked above, the bound \eqref{eq:hi-no-wt} follows by a closely related, but simpler, argument, involving the second upper bound in Lemma \ref{lemma:vandercorput}.
 This completes the proof of Theorem \ref{thm:high_energy}.
\end{proof}

\subsection{Low Energy Estimates}\label{sec:low_energy}

Concerning $\alpha_l(x,t)$, the low energy part of $\alpha(x,t)$, we have the following time-decay estimate:
\begin{theorem}\label{thm:loweng}
Let $\alpha_l(x,t)$ be defined as in \eqref{eq:low}.
There is a constant $C>0$ such that for $\tau\in[0,2\pi]$ we have the following weighted $L^1\to L^{\infty}$ bound
    \begin{align}
    \|\langle x\rangle^{-2} \alpha_l(x,t)\|_{L^{\infty}(\R_x)} \leq \frac{C}{|t|^{1/2}}\frac{1}{1+\sin^2(\tau/2) t}\|\langle y\rangle^2\alpha_0(y)\|_{L^1(\R_y)}.
\label{eq:low-bd}\end{align}
\end{theorem}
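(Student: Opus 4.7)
The strategy mirrors the high-energy argument. Substituting the jump formula from Proposition~\ref{prop:limAbs} into \eqref{eq:low} and interchanging integration by Fubini, we write $\alpha_l(t,x)=\int K_t^{(l)}(x,y)\alpha_0(y)\,dy$ where
\[
K_t^{(l)}(x,y)=\frac{1}{2\pi}\int_0^{2k_0}e^{-it\sqrt{1+k^2}}\chi(k)\,\frac{|T(k,\tau)|^2}{\sqrt{1+k^2}}\,\langle k\rangle^{-3/2-\eps}\,\mathcal{M}(k,x,y,\tau)\,dk,
\]
and $\mathcal{M}$ is assembled from the products $\xi_\pm(x;k,\tau)\xi_\pm(y;-k,\tau)^\top\sigma_1$; observe that the $1/k$ factor from Proposition~\ref{prop:limAbs} has already cancelled against the $k/\sqrt{1+k^2}$ of Stone's formula. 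Expanding the Jost solutions via Proposition~\ref{prop:xi_pm} and splitting into the four sign regions of $(x,y)$, the matrix $\mathcal{M}$ decomposes into a finite sum of terms of the form $c_j(k,\tau)\,e^{ik(\alpha x+\beta y)}$ with $\alpha,\beta\in\{-1,+1\}$ and $c_j$ a product of entries of $v_\pm^{(\pm)}$ with one of the coefficients $A,B,C,D$ from \eqref{eq:ABCD} (or with $1$). Hence $K_t^{(l)}$ is a finite sum of oscillatory integrals
\[
J_{\alpha,\beta}(r,t,\tau) = \int_0^{2k_0}e^{-it\sqrt{1+k^2}+ikr}\,\psi(k,\tau)\,dk,\qquad r=\alpha x+\beta y,
\]
with amplitude $\psi(k,\tau)=\chi(k)\langle k\rangle^{-3/2-\eps}|T(k,\tau)|^2 c_j(k,\tau)/\sqrt{1+k^2}$.

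The coefficients $A,B,C,D$ each carry a $1/k$ singularity at $k=0$, but these are precisely cancelled by $|T(k,\tau)|^2=k^2/(k^2+\sin^2(\tau/2))$. Using the identities $1-e^{-i\tau}=2i\sin(\tau/2)e^{-i\tau/2}$ and $1+e^{-i\tau}=2\cos(\tau/2)e^{-i\tau/2}$ together with the AM--GM bound $2|k\sin(\tau/2)|\le k^2+\sin^2(\tau/2)$, the products $|T|^2A,|T|^2B,|T|^2C,|T|^2D$ are uniformly bounded on $k\in[0,2k_0]$ and $\tau\in[0,2\pi]$ (and actually vanish at $k=0$), with $L^1$-bounded $k$-derivatives uniformly in $\tau$. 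Applying the second estimate of Lemma~\ref{lemma:vandercorput} with $j=0$ then yields the $\tau$-uniform bound $|J_{\alpha,\beta}(r,t,\tau)|\lesssim t^{-1/2}$.

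For the non-resonant improvement we apply the third estimate of Lemma~\ref{lemma:vandercorput}, which controls $|J_{\alpha,\beta}|$ by $t^{-3/2}\big\|(\partial_{kk}+ir\partial_k)\big(\tfrac{\sqrt{1+k^2}}{k}\psi\big)\big\|_{L^1}$. The heart of the argument is the calculation, based on \eqref{eq:dtk}--\eqref{eq:d2tk}, that $|T(k,\tau)|^2/k=k/(k^2+\sin^2(\tau/2))$ satisfies
\[
\Big\|\partial_k\tfrac{|T|^2}{k}\Big\|_{L^1[0,2k_0]}\lesssim \frac{1}{|\sin(\tau/2)|},\qquad \Big\|\partial_k^2\tfrac{|T|^2}{k}\Big\|_{L^1[0,2k_0]}\lesssim \frac{1}{\sin^2(\tau/2)},
\]
and the same rates persist after multiplication by the uniformly bounded smooth factors $c_j$ (and by the cutoff $\chi\langle k\rangle^{-3/2-\eps}$). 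Combined with $|r|\le\langle x\rangle+\langle y\rangle\le 2\langle x\rangle\langle y\rangle$ and the Young-type inequality $|r|/|\sin(\tau/2)|\le \tfrac12 r^2+\tfrac{1}{2\sin^2(\tau/2)}$, we obtain $|J_{\alpha,\beta}|\lesssim \langle x\rangle^2\langle y\rangle^2\sin^{-2}(\tau/2)\,t^{-3/2}$. Applying the algebraic interpolation \eqref{eq:abc-ineq} with $b\sim t^{-1/2}$ and $c\sim \langle x\rangle^2\langle y\rangle^2\sin^{-2}(\tau/2)\,t^{-3/2}$ finally gives
\[
\langle x\rangle^{-2}|K_t^{(l)}(x,y)|\langle y\rangle^{-2}\,\lesssim\,\frac{1}{t^{1/2}(1+\sin^2(\tau/2)\,t)},
\]
which upon convolution against $\alpha_0$ yields \eqref{eq:low-bd}.

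The main obstacle is the delicate singular cancellation at $k=0$: the Jost coefficients blow up like $1/k$ while $|T(k,\tau)|^2$ vanishes quadratically, and each integration by parts in Lemma~\ref{lemma:vandercorput} contributes one additional negative power of $k$. Keeping track of the compensating powers of $\sin(\tau/2)$ uniformly in $\tau\in[0,2\pi]$---in such a way that the two bounds interpolate to the advertised rate $t^{-1/2}(1+\sin^2(\tau/2)t)^{-1}$---is the technical heart of the estimate, and it is precisely the role of the weights $\langle x\rangle^{-2}$ and $\langle y\rangle^{-2}$ to absorb the $r\partial_k$ commutator term arising in the third bound of Lemma~\ref{lemma:vandercorput}.
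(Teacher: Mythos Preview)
Your plan has a genuine gap in the $t^{-3/2}$ step. The claim that each amplitude $c_j$ carries \emph{at most one} of the coefficients $A,B,C,D$ is false: when both $x$ and $y$ lie on the ``non-trivial'' side of a given Jost solution (e.g.\ $x<0$ and $y<0$ for $\xi_+$, or $x>0$ and $y>0$ for $\xi_-$), the product $\xi_\pm(x;k)\xi_\pm(y;-k)^\top$ produces terms with \emph{two} such coefficients, for instance $A(k)A(-k)$. Since each of $A,B,C,D$ has a simple pole at $k=0$ whose residue is proportional to $\sin(\tau/2)$, these double products behave like $\sin^2(\tau/2)/k^2$ near $k=0$. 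The uniform boundedness of $|T|^2 c_j$ still holds---one gets $|T|^2 c_j\sim \sin^2(\tau/2)/(k^2+\sin^2(\tau/2))$, so your $t^{-1/2}$ argument via the second bound of Lemma~\ref{lemma:vandercorput} survives. But the third bound of Lemma~\ref{lemma:vandercorput} requires working with $\tfrac{\sqrt{1+k^2}}{k}\psi$, i.e.\ with $|T|^2 c_j/k\sim \sin^2(\tau/2)/[k(k^2+\sin^2(\tau/2))]$, which is neither bounded at $k=0$ nor integrable there. So the integration by parts underlying Lemma~\ref{lemma:vandercorput} fails term-by-term for these pieces, and no $t^{-3/2}$ bound follows.

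The point is that the individual singularities of $A,B,C,D$ cancel in the \emph{full} Jost solution (for instance $A(k)+B(k)=e^{-i\tau}$ identically, and $\xi_{+,1}(x;k)=e^{-i\tau}\cos(kx)+i\sin(kx)+i(1-e^{-i\tau})\sqrt{1+k^2}\,x\operatorname{sinc}(kx)$ for $x<0$, which is manifestly smooth at $k=0$). Your phase-by-phase decomposition destroys this cancellation. The paper avoids the issue by never splitting $f(x,y;k,\tau)$ into separate exponentials: it works directly with $F(x;k,\tau)=\langle k\rangle^{-3/2-\eps}\int f\,\alpha_0\,dy$, which is smooth in $k$ with bounds $|\partial_k^j F|\lesssim\langle x\rangle^j\|\langle y\rangle^j\alpha_0\|_{L^1}$ (Proposition~\ref{lem:Fbounds}). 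Instead of Lemma~\ref{lemma:vandercorput}, the paper then uses a $t$-dependent splitting of the $k$-integral at $k\sim t^{-1/2}$: the small-$k$ piece is estimated trivially (short interval), and on the large-$k$ piece one integrates by parts directly---once for $t^{-1/2}$ (Proposition~\ref{lem:t12_loweng}), and once more for $t^{-3/2}$ (Proposition~\ref{lem:t32_loweng}), the boundary term at $k=0$ vanishing precisely because $|T|^2/k\cdot F\to 0$ there for $\tau\in(0,2\pi)$. To repair your argument you would essentially have to re-sum the phases before the second integration by parts, which amounts to the paper's route.
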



In contrast to the high-energy time-decay bound (Theorem \ref{thm:high_energy}), the low-energy bound (Theorem \ref{thm:loweng}) depends on $\tau$, and in particular for (threshold) resonant values of $\tau$, the decay rate is only $\lesssim t^{-1/2}$.


 The proof of Theorem \ref{thm:loweng} proceeds in the following way:
\begin{enumerate}
    \item Proposition \ref{lem:t12_loweng} establishes a $t^{-1/2}$ upper bound for all $\tau$ values.
    \item Proposition \ref{lem:t32_loweng} establishes a $t^{-3/2}$ upper bound for all $\tau \neq 0, 2\pi$.
    \item Finally, we use these upper bounds together with the interpolation inequality \eqref{eq:abc-ineq} to get the upper bound in Theorem \ref{thm:loweng}.
\end{enumerate}

 To capture the subtleties involving $k$ near zero (equivalently - energies near the band edge), we must carefully account for cancellations in $\Resk_{+}(k,\tau)-\Resk_-(k,\tau)$. The key is the jump formula of Proposition \ref{prop:limAbs}, which we use to reexpress the low energy representation formula \eqref{eq:low}:
\begin{equation*}
\alpha_l(t,x) =\frac{1}{2\pi i}\int_{0}^{\infty}e^{-i\sqrt{1+k^2} t}\chi(k)\ \langle k\rangle^{-3/2-\eps}\frac{k}{\sqrt{1+k^2}} \ \left[\left(\Resk_{+}(k,\tau)-\Resk_-(k,\tau)\right)\alpha_0\right] (x)  \, dk 
\end{equation*}
 By Proposition \ref{prop:limAbs}, we have 
  \begin{equation} \left[\left(\Resk_{+}(k,\tau)-\Resk_-(k,\tau)\right)\alpha_0\right](x) =
\frac{\vert T(k,\tau)\vert^2}{2i e^{-i\tau}k} \   \int\limits_{\R} f(x,y;k,\tau) \alpha_0(y) \, dy \, ,
\label{eq:R-jump}  \end{equation}
where
 \begin{equation} f(x,y;k,\tau)=\left(\xi_{+}(x;k,\tau)\xi_{+}(y;-k,\tau)^\top\sigma_1+e^{-i\tau}\xi_{-}(x;k,\tau)\xi_{-}(y;-k,\tau)^\top\sigma_1\right).
 \label{eq:f-def}\end{equation}
  Substitution of \eqref{eq:R-jump} into the definition of $\alpha_l(t,x)$
  (and cancelling factors of $k$),
  we obtain
  \begin{align}\label{eq:alphal_TF}
    \alpha_l(t,x) & = \frac{1}{-4\pi e^{-i\tau}}\int_0^{\infty} e^{-i\sqrt{1+k^2}t}\chi(k)\frac{\vert T(k,\tau)\vert^2}{\sqrt{1+k^2}}F(x;k,\tau) dk,
\end{align}
where 
\begin{align} \label{eq:Fdef}
    F(x;k,\tau) &= \langle k\rangle^{-3/2-\eps} \int_{\R} f(x,y;k,\tau)\ \alpha_0(y)dy\ .
\end{align}
We derive the desired low energy bound in two steps: first, in Proposition~\ref{lem:t12_loweng}, 
 we prove a $|t|^{-\frac12}$ decay-estimate for $\alpha_l(t,x)$ which holds uniformly in $\tau$.
Second, in Proposition~\ref{lem:t32_loweng}, we prove a $\tau$-dependent $|t|^{-\frac32}$ decay-estimate, as in Theorem \ref{maintheorem}. 

\begin{remark}
    In order to prove the aforementioned bounds, we must split the integral representation of $\alpha_{l}(x,t)$ further into two subdomains: $k$ values in a neighborhood of zero, and $k$ values which are small but bounded away from zero. This is necessary as there are no uniform in $\tau$ bounds near $k=0$ for $\partial_k\vert T(k,\tau)\vert^2$; see 
    \eqref{eq:dtk}. Therefore, one cannot apply a stationary phase argument for small values of $k$. For those values of $k$, a cruder estimate is used, and therefore this interval is chosen to be vanishingly small as $t\to \infty$.
\end{remark}

\begin{prop}[$\tau$-independent estimate]\label{lem:t12_loweng}
    Let $\alpha_l (t,x)$ be defined as in \eqref{eq:low} or equivalently \eqref{eq:alphal_TF}. There exists a constant $C_l>0$ such that for every $\tau \in [0,2\pi]$ and every $t>0$
    \begin{align}\label{eq:low-1}
        \|\langle x\rangle^{-1}\alpha_{l}(x,t)\|_{L^{\infty}(\R_x)} \leq \frac{C_l}{ | t|^{1/2}}\|\langle y\rangle \alpha_0(y)\|_{L^1(\R_y)}
    \end{align}
\end{prop}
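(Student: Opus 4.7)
I would reduce the claim to a pointwise kernel estimate. Writing
\[\alpha_l(t,x)=\int_\R K(t,x,y;\tau)\,\alpha_0(y)\,dy\]
from \eqref{eq:alphal_TF}--\eqref{eq:Fdef}, it suffices to show $|K(t,x,y;\tau)|\lesssim t^{-1/2}\langle x\rangle\langle y\rangle$ uniformly in $\tau$, after which \eqref{eq:low-1} follows by the weighted $L^{1}$--$L^{\infty}$ bound. Following the indication in the remark preceding the proposition, I would split the $k$-integration at the threshold $k^{*}\equiv t^{-1/2}$ (the cutoff $\chi$ already truncates at $2k_0$).

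On the small-$k$ piece $[0,k^{*}]$ I would use a trivial absolute-value bound. Substituting $\lambda(\sqrt{1+k^{2}}+i0)=-ik$ into \eqref{eq:ABCD}, the Jost coefficients $A,B,C,D$ are at worst of order $|\sin(\tau/2)|/k$ as $k\to 0$, while by \eqref{eq:tk} one has $|T(k,\tau)|^{2}=k^{2}/(k^{2}+\sin^{2}(\tau/2))$. Every product $|T|^{2}|A|,\ldots,|T|^{2}|D|$ then collapses to an expression dominated by $\sin^{2}(\tau/2)/(k^{2}+\sin^{2}(\tau/2))$ or $k|\sin(\tau/2)|/(k^{2}+\sin^{2}(\tau/2))$, each bounded by $1$ by AM--GM. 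Hence $|T(k,\tau)|^{2}|f(x,y;k,\tau)|\lesssim 1$ uniformly in $x,y,k,\tau$, and this contribution to $|K|$ is at most $C k^{*}=C t^{-1/2}$ (no spatial weights are needed here).

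On the intermediate piece $[k^{*},2k_0]$ I would apply the second-derivative Van der Corput lemma with phase $\phi(k)=-\sqrt{1+k^{2}}\,t$ satisfying $|\phi''(k)|=t/(1+k^{2})^{3/2}\gtrsim t$ on $\supp(\chi)$, and amplitude
\[\psi(k;x,y,\tau)=\chi(k)\,\frac{|T(k,\tau)|^{2}}{\sqrt{1+k^{2}}}\,\langle k\rangle^{-3/2-\eps}\,f(x,y;k,\tau).\]
The bound $\|\psi\|_{\infty}\le C$ is in hand from the near-zero estimate. The $L^{1}(dk)$ bound on $\partial_k\psi$ splits naturally: differentiating the plane-wave factors $e^{\pm ikx},e^{\pm iky}$ inside $\xi_\pm$ produces at most a factor $\langle x\rangle\langle y\rangle$ multiplying $|\psi|$, which is integrable on a bounded interval; differentiating the $k$-dependent coefficients $A,\ldots,D$ produces $1/k^{2}$ singularities which, once paired with $|T|^{2}$, are controlled uniformly in $\tau$ via Proposition~\ref{lem:TransmissionBounds} and scale-invariant identities such as
\[\int_0^{\infty}\frac{\sin(\tau/2)}{k^{2}+\sin^{2}(\tau/2)}\,dk=\frac{\pi}{2},\qquad \int_0^{\infty}\frac{\sin^{3}(\tau/2)}{(k^{2}+\sin^{2}(\tau/2))^{2}}\,dk=\frac{\pi}{4}.\]
Van der Corput then bounds this piece of $|K|$ by $C t^{-1/2}\langle x\rangle\langle y\rangle$.

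The main technical obstacle is precisely the last step. The Jost coefficients have $1/k$ singularities at $k=0$ and their derivatives $1/k^{2}$ singularities, which only become $L^{1}(dk)$-integrable once paired with the vanishing of $|T|^{2}$ at $k=0$. Organizing this pairing so that the resulting bound is uniform in $\tau$ — in particular, remains finite as $\tau\to 0$, the resonant limit, where the individual coefficients blow up while the product $|T|^{2}\cdot(\text{coefficient})$ stays tame — is the crux of the argument. Once the amplitude estimate is in place, combining the two $k$-intervals yields $|K(t,x,y;\tau)|\lesssim t^{-1/2}\langle x\rangle\langle y\rangle$ and hence \eqref{eq:low-1}.
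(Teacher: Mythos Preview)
Your overall strategy — splitting at $k\sim t^{-1/2}$, a trivial bound on the inner piece, and an oscillatory-integral bound on the outer piece — matches the paper's. The small-$k$ bound $|T(k,\tau)|^{2}|f(x,y;k,\tau)|\lesssim 1$ is correct. The gap is in the Van der Corput step, specifically in the $L^{1}$ control of $\partial_k\psi$.

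The amplitude $f$ in \eqref{eq:f-def} is \emph{quadratic} in the Jost solutions, so its expansion contains products of two singular coefficients, e.g.\ terms of size $\sim \sin^{2}(\tau/2)/k^{2}$ coming from $\xi_{-}(x;k)\xi_{-}(y;-k)$ when $x,y>0$. Your scale-invariant identities
\[
\int_0^\infty\frac{|\sin(\tau/2)|}{k^{2}+\sin^{2}(\tau/2)}\,dk=\frac{\pi}{2},\qquad
\int_0^\infty\frac{|\sin(\tau/2)|^{3}}{(k^{2}+\sin^{2}(\tau/2))^{2}}\,dk=\frac{\pi}{4}
\]
only cover the pairing of $|T|^{2}$ or $\partial_k|T|^{2}$ with a \emph{single} singular coefficient. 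For the doubly-singular terms you instead meet, after one $k$-derivative,
\[
|T|^{2}\,\partial_k\Bigl(\frac{\sin^{2}(\tau/2)}{k^{2}}\Bigr)\ \sim\ \frac{\sin^{2}(\tau/2)}{k\,(k^{2}+\sin^{2}(\tau/2))},\qquad
(\partial_k|T|^{2})\,\frac{\sin^{2}(\tau/2)}{k^{2}}\ \sim\ \frac{\sin^{4}(\tau/2)}{k\,(k^{2}+\sin^{2}(\tau/2))^{2}},
\]
and both integrals over $[t^{-1/2},2k_0]$ are of order $\log\bigl(\sin^{2}(\tau/2)\,t\bigr)$ in the regime $\sin^{2}(\tau/2)\,t\gg 1$. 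Van der Corput then yields only $t^{-1/2}\log t$, not $t^{-1/2}$.

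The paper circumvents this in two ways. First, it uses a single integration by parts (gaining $1/t$ rather than $1/\sqrt{t}$), which leaves enough room to absorb a harmless $\log t$; see \eqref{eq:I_upperbound}. Second and more fundamentally, it does \emph{not} estimate the Jost coefficients individually: Proposition~\ref{lem:Fbounds} bounds $\sup_k|F|$ and $\sup_k|\partial_k F|$ directly, which implicitly uses the cancellation $C+D=e^{i\tau}$ (and similarly $A+B$) so that each Jost solution satisfies $|\xi_{\pm}(x;k,\tau)|\lesssim\langle x\rangle$ uniformly in $k,\tau$. This is exactly the structural ingredient your term-by-term pairing misses; once you incorporate it, either your Van der Corput route or the paper's integration-by-parts route closes.
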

\begin{proof}
It is useful to decompose $\alpha_l(x,t)$ further into its spectral contributions from: $0\le k\lesssim |t|^{-\frac12}$  and $|t|^{-\frac12}\lesssim k <\infty$ as follows
\begin{align}\label{eq:alphal_12}
\begin{split}
    \alpha_l(x,t) &=  \frac{1}{-4\pi e^{-i\tau}}\int_0^{\infty} e^{-i\sqrt{1+k^2}t}\chi(k)\chi(k\sqrt{t})\frac{\vert T(k,\tau)\vert^2}{\sqrt{1+k^2}}F(x;k,\tau) dk \\
    &+ \frac{1}{-4\pi e^{-i\tau}}\int_0^{\infty} e^{-i\sqrt{1+k^2}t}\chi(k)(1-\chi(k\sqrt{t}))\frac{\vert T(k,\tau)\vert^2}{\sqrt{1+k^2}}F(x;k,\tau) dk \\
    &:= \alo(x,t) + \alt(x,t).
\end{split}
\end{align}
To bound the first term, we note that since $|T(k,\tau)|\leq 1$ (Lemma \ref{lem:TransmissionBounds}), and since the integral is over an interval of length $\sim t^{-1/2}$, we get
\begin{align}\label{eq:alphal1_bd}
    \vert \alo(x,t)\vert &\leq \frac{1}{4\pi}\int_0^{2k_0/\sqrt{t}}\frac{\vert T(k,\tau)\vert^2}{\sqrt{1+k^2}}\vert F(x;k,\tau)\vert dk
    \leq \frac{k_0}{2\pi\sqrt{t}}\sup_{k} \vert F(x;k,\tau)\vert.
\end{align}

To bound the second term, we rewrite
\begin{align*}
    \vert \alt(x,t)\vert&= \left\vert \frac{1}{4\pi e^{-i\tau}}\int_0^{\infty} \frac{\sqrt{1+k^2}}{ik}\partial_k\left(e^{-i\sqrt{1+k^2}t}\right)\chi(k)(1-\chi(k\sqrt{t}))\frac{\vert T(k,\tau)\vert^2}{\sqrt{1+k^2}}F(x;k,\tau) dk \right\vert \, .
    \end{align*}
    Since the support of $\chi(k)(1-\chi(k\sqrt{t}))$ is bounded away from $k=0$ and $k=+\infty$, integration by parts and the triangle inequality yields
    \begin{align*}
        \vert \alt(x,t)\vert &= \left|\frac{1}{4\pi t}\int_0^{\infty} e^{-i\sqrt{1+k^2}t}\partial_k\left[\chi(k)(1-\chi(k\sqrt{t}))\frac{\vert T(k,\tau)\vert^2}{k}F(x;k,\tau)\right] dk \,  \right| \\
        &\leq \frac{1}{4\pi t}\int_0^{\infty}\left\vert e^{-i\sqrt{1+k^2}t}\partial_k\left[\chi(k)(1-\chi(k\sqrt{t}))\frac{\vert T(k,\tau)\vert^2}{k}F(x;k,\tau)\right]\right\vert dk \, , \\
        &\leq  \frac{1}{4\pi t}\int_{k_0/\sqrt{t}}^{2k_0}\left\vert\frac{\partial_k\left(\vert T(k,\tau)\vert^2F(x;k,\tau)\right)}{k}\right\vert +\left\vert \partial_k\left(\frac{\chi(k)(1-\chi(k\sqrt{t}))}{k}\right)\vert T(k,\tau)\vert^2F(x;k,\tau)\right\vert dk\\
    &:\,= {\rm (I)} + {\rm (II)}. \numberthis \label{eq:alphal2_split}
     \end{align*}
For (II) we estimate as follows, since the transmission coefficient satisfies $|T(k,\tau)| \leq 1$,
\begin{align*}
    {\rm (II)} &= \frac{1}{4\pi t}\int_{k_0/\sqrt{t}}^{2k_0}\left\vert \partial_k\left(\frac{\chi(k)(1-\chi(k\sqrt{t}))}{k}\right)\vert T(k,\tau)\vert^2F(x;k,\tau)\right\vert dk\\
    &=  \frac{1}{4\pi t}\sup\limits_{k\in [0,2k_0]}\vert F(x;k,\tau)\vert \int_{k_0/\sqrt{t}}^{2k_0}\left\vert \partial_k\left(\frac{\chi(k)(1-\chi(k\sqrt{t}))}{k}\right)\right\vert dk\\
    &= \frac{1}{4\pi t}\sup\limits_{k\in [0,2k_0]}\vert F(x;k,\tau)\vert \int_{k_0/\sqrt{t}}^{2k_0}\left\vert \left(\frac{\chi'(k)(1-\chi(k\sqrt{t}))+\sqrt{t}\chi(k)\chi'(k\sqrt{t})}{k}-\frac{\chi(k)(1-\chi(k\sqrt{t}))}{k^2}\right)\right\vert dk\\
    &\leq \frac{1}{4\pi t}\sup\limits_{k\in [0,2k_0]}\vert F(x;k,\tau)\vert \int_{k_0/\sqrt{t}}^{2k_0}\frac{\left\vert\chi'(k)\right\vert}{k}+\frac{\left\vert\sqrt{t}\chi'(k\sqrt{t})\right\vert}{k}+\frac{1}{k^2} dk \numberthis \label{eq:II_prebound} \, .
\end{align*}
To bound \eqref{eq:II_prebound}, we use that $\chi'(k)=0$ for $k\in [0,k_0)$ and $k\in~(2k_0, \infty)$ to get
\begin{align*}
\int_{k_0/\sqrt{t}}^{2k_0}\frac{\left\vert\chi'(k)\right\vert}{k}dk &\leq \sup\vert \chi'\vert\int_{k_0}^{2k_0}\frac{1}{k}dk =  \sup\vert \chi'\vert \cdot {\rm ln}(2)\ , \\
    \int_{k_0/\sqrt{t}}^{2k_0}\frac{\left\vert\sqrt{t}\chi'(k\sqrt{t})\right\vert}{k}dk &= \sqrt{t}\int_{k_0}^{2k_0\sqrt{t}}\frac{\vert \chi'(z)\vert}{z}dz\leq \sup\vert\chi'\vert\cdot\ln(2)\sqrt{t}\\
    \int_{k_0/\sqrt{t}}^{2k_0}\frac{1}{k^2} dk & = (\sqrt{t}-1/2)k_0^{-1}\leq \sqrt{t}k_0^{-1}
\end{align*}
Substituting these bounds into \eqref{eq:II_prebound}, we obtain
\begin{equation}\label{eq:II_loweng_bd}
({\rm II}) \leq  \frac{C'(1+k_0^{-1})}{\sqrt{t}}\sup\limits_{k\in [0,2k_0]}\vert F(x;k,\tau)\vert \,  .
\end{equation}
We now  bound the expression $({\rm I})$ in \eqref{eq:alphal2_split}. By applying the upper bounds on the transmission coefficient from Proposition \ref{lem:TransmissionBounds}, we get 
\begin{align*}
    ({\rm I}) &\leq \frac{1}{4\pi t}\int_{k_0/\sqrt{t}}^{2k_0}\vert \partial_k F(x;k,\tau)\vert \frac{\vert T(k,\tau)\vert^2}{k} + \vert\partial_k\vert T(k,\tau)\vert^2\vert \frac{\vert F(x;k,\tau)\vert}{k}dk\\
    &\leq \frac{1}{4\pi t}\sup\limits_{k\in [0,2k_0]}\vert \partial_k F(x;k,\tau)\vert\int_{k_0/\sqrt{t}}^{2k_0} \frac{1}{k}dk + \frac{1}{4\pi t}\sup\limits_{k\in [0,2k_0]}\vert F(x;k,\tau)\vert\int_{k_0/\sqrt{t}}^{2k_0}\frac{1}{k^2}dk\\
     &\leq \frac{1}{4\pi t}\sup\limits_{k\in [0,2k_0]}\vert \partial_k F(x;k,\tau)\vert(1+\ln(t)) + \frac{1}{4\pi t}\sup\limits_{k\in [0,2k_0]}\vert F(x;k,\tau)\vert(1+\sqrt{t})\\
     &\leq \frac{C''}{\sqrt{t}}\left(\sup\limits_{k\in [0,2k_0]}\vert \partial_k F(x;k,\tau)\vert+\sup\limits_{k\in [0,2k_0]}\vert F(x;k,\tau)\vert\right) \numberthis \label{eq:I_upperbound}
\end{align*}
By the upper bound on $|\alpha_l^{(1)}(x,t)| $, \eqref{eq:alphal1_bd}, and the upper bound on $|\alpha_l^{(2)}(x,t)| \le {\rm (I)} + {\rm (II)}$, there is a constant $C>0$ (independent of $\alpha_{l}$, $t$, and $\tau$), such that
\[    |\alpha_{l}(x,t)| \leq \frac{C}{\sqrt{t}} \left(  \sup\limits_{k\in [0,2k_0]} | F(x;k, \tau) | +\sup\limits_{k\in [0,2k_0]}|\partial_k  F(x;k, \tau) | \right) \, . \]
Hence, to complete the proof of Proposition \ref{lem:t12_loweng}, we need to bound $F$ and $\partial_k F$ in terms of the $L^1$ norm of $\alpha_0$. These bounds are displayed in
Proposition \ref{lem:Fbounds} of \Cref{ap:Fbounds}. 
\end{proof}

Next we prove a more subtle $\tau$-dependent decay estimate, which requires stronger localization of $\alpha_0$. We shall make crucial use of the small $|k|$ behavior of the transmission coefficient (see \eqref{eq:tk}):
\begin{equation}
    \vert T(k,\tau)\vert^2 = \frac{k^2}{k^2+\sin^2(\tau/2)}.
\label{eq:tk-1}\end{equation} 

\begin{prop}[$\tau$-dependent estimate]\label{lem:t32_loweng}
    Let  $\alpha_l(x,t)$ denote low-energy part of $\alpha(x,t)$; see \eqref{eq:low} or equivalently \eqref{eq:alphal_TF}. There is a $\tau$-independent constant $D>0$ such that for $t>0$ and $\tau \in (0,2\pi)$ (no threshold resonance) we have:
    \begin{align*}
            \|\langle x\rangle^{-2} \alpha_l(x,t)\|_{L^{\infty}(\R_x)} \leq \frac{D}{\sin^2(\tau/2)\ |t|^{\frac32}}\ \|\langle y\rangle^2\alpha_0(y)\|_{L^1(\R_y)}\ ,
    \end{align*}
\end{prop}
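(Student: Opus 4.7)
Since we are in the non-resonant regime $\tau\in(0,2\pi)$, the transmission coefficient satisfies $|T(k,\tau)|^2 = k^2/(k^2+\sin^2(\tau/2))$ (see \eqref{eq:tk-1}), which vanishes to order $k^2$ at the threshold $k=0$. The plan is to exploit this vanishing, together with the fact that the phase $\phi(k)=\sqrt{1+k^2}$ has only one stationary point at $k=0$ on $[0,\infty)$, to perform one integration by parts (extracting a $t^{-1}$ factor) and then apply Van der Corput's lemma (gaining an additional $t^{-1/2}$), yielding the $t^{-3/2}$ rate. The entire $\tau$-dependence will be concentrated in the constants that appear when estimating rational functions of $k$ whose denominators involve $k^2+\sin^2(\tau/2)$.

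Starting from \eqref{eq:alphal_TF}, I factor
\[
\frac{|T(k,\tau)|^2}{\sqrt{1+k^2}} \;=\; \frac{k}{\sqrt{1+k^2}}\cdot\frac{k}{k^2+\sin^2(\tau/2)},
\]
and use $\tfrac{k}{\sqrt{1+k^2}}e^{-it\sqrt{1+k^2}}=\tfrac{i}{t}\partial_k e^{-it\sqrt{1+k^2}}$. Integration by parts then gives
\[
\alpha_l(t,x)\;=\;\frac{i}{4\pi e^{-i\tau}\,t}\int_0^\infty e^{-it\sqrt{1+k^2}}\,G_1(x;k,\tau)\,dk, \qquad G_1 \;:=\; \partial_k\!\left[\chi(k)\,\frac{k\,F(x;k,\tau)}{k^2+\sin^2(\tau/2)}\right].
\]
Both boundary terms vanish: at $k=\infty$ from the compact support of $\chi$, and at $k=0$ because the factor $k/(k^2+\sin^2(\tau/2))$ vanishes there (this is the one place where $\sin(\tau/2)\neq 0$ is used). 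Since $\supp G_1\subset[0,2k_0]$ and $\phi''(k)=(1+k^2)^{-3/2}\geq(1+4k_0^2)^{-3/2}$ on this interval, Van der Corput's lemma yields
\[
\left|\int_0^\infty e^{-it\sqrt{1+k^2}}\,G_1\,dk\right|\;\lesssim\;\frac{1}{\sqrt{t}}\,\bigl(\|G_1\|_{L^\infty_k}+\|\partial_k G_1\|_{L^1_k}\bigr).
\]

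It remains to establish
\[
\|G_1\|_{L^\infty_k}\;+\;\|\partial_k G_1\|_{L^1_k}\;\lesssim\;\frac{1}{\sin^2(\tau/2)}\,\langle x\rangle^2\,\|\langle y\rangle^2\alpha_0\|_{L^1_y}.
\]
The estimates on the rational $\tau$-dependent factors are the AM--GM bound $|k|/(k^2+\sin^2(\tau/2))\leq 1/(2\sin(\tau/2))$, the pointwise bound $|(\sin^2(\tau/2)-k^2)/(k^2+\sin^2(\tau/2))^2|\leq\sin^{-2}(\tau/2)$, and the integral estimate $\int_0^{2k_0} k/(k^2+\sin^2(\tau/2))^2\,dk\leq\tfrac12\sin^{-2}(\tau/2)$, which follows from the substitution $u=k^2$. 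These are combined with the bounds on $F$, $\partial_k F$, and $\partial_k^2 F$ from Proposition~\ref{lem:Fbounds}, which control $\sup_k|\partial_k^j F(x;k,\tau)|$ by a multiple of $\langle x\rangle^2\|\langle y\rangle^2\alpha_0\|_{L^1}$ for $j=0,1,2$ (each $k$-derivative transferring an $x$- or $y$-weight from the oscillating Jost solutions present in the kernel $f(x,y;k,\tau)$ from \eqref{eq:f-def}).

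The main obstacle is the bookkeeping in $\|\partial_k G_1\|_{L^1_k}$: since $\partial_k G_1$ involves second derivatives of the rational factor $k/(k^2+\sin^2(\tau/2))$, a naive pointwise bound suggests a $\sin^{-3}(\tau/2)$ singularity, but $L^1_k$-integration on $[0,2k_0]$ recovers one power of $\sin(\tau/2)$, so the net amplification is the optimal $\sin^{-2}(\tau/2)$. Verifying that this optimal power is achieved term by term — via the three estimates above and careful cancellation of the apparent $\sin^{-3}$ pointwise terms when integrated — is the core calculation of the proof.
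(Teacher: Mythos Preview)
Your approach is correct and takes a genuinely different route from the paper's. Both proofs begin with the same first integration by parts (your $G_1$ coincides with the paper's $G$ in \eqref{eq:Gdef}, since $|T|^2/k = k/(k^2+\sin^2(\tau/2))$), but then diverge: the paper introduces a second time-dependent cutoff $\chi(\sqrt{t}k)$, estimates the near-threshold piece trivially on an interval of length $\sim t^{-1/2}$, and performs a second explicit integration by parts on the complement; you instead invoke Van der Corput's lemma directly on $\int e^{-it\sqrt{1+k^2}}G_1\,dk$. Your route is more compact and avoids the auxiliary decomposition.

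One point worth flagging: you cannot simply quote the paper's pointwise bound on $\partial_k G$ from Proposition~\ref{lem:Fbounds}, since that bound carries a $1/k$ factor and is not integrable down to $k=0$ (the paper only uses it on $[k_0/\sqrt{t},2k_0]$). Your proposal correctly anticipates this and instead works directly with the factored form $G_1=\partial_k[\chi\,Q\,F]$, $Q(k)=k/(k^2+\sin^2(\tau/2))$, estimating $\int_0^{2k_0}|\partial_k G_1|\,dk$ term by term via the three rational-function bounds you list. The essential checks are $\int_0^{2k_0}|Q''|\,dk\lesssim\sin^{-2}(\tau/2)$ (your substitution $u=k^2$ handles the $k/(k^2+s^2)^2$ integral), $|Q'|\leq\sin^{-2}(\tau/2)$ pointwise, and $\int_0^{2k_0}|Q|\,dk\lesssim\log(1/\sin(\tau/2))$; combined with the $F$-bounds from Proposition~\ref{lem:Fbounds} these give the claimed $\sin^{-2}(\tau/2)\langle x\rangle^2\|\langle y\rangle^2\alpha_0\|_{L^1}$ control. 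The terms involving $\chi',\chi''$ are harmless because they are supported in $[k_0,2k_0]$, where $Q$ and its derivatives are bounded independently of $\tau$.
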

\begin{proof}

 Assume  $\tau \in (0,2\pi)$. We study the low-energy solution $\alpha_l(t,x)$ as displayed in \eqref{eq:alphal_TF}. Using integration by parts we have
\begin{align*}
    \alpha_l(t,x) &=  \frac{1}{-4\pi e^{-i\tau}}\int_0^{\infty} e^{-i\sqrt{1+k^2}t}\chi(k)\frac{\vert T(k,\tau)\vert^2}{\sqrt{1+k^2}}F(x;k,\tau) dk\\
    &=\frac{1}{-4\pi e^{-i\tau}}\int_0^{\infty} \frac{\sqrt{1+k^2}}{-ikt}\partial_k\left(e^{-i\sqrt{1+k^2}t}\right)\chi(k)\frac{\vert T(k,\tau)\vert^2}{\sqrt{1+k^2}}F(x;k,\tau) dk\\
    &=\frac{1}{4\pi it e^{-i\tau}}\int_0^{\infty} \partial_k\left(e^{-i\sqrt{1+k^2}t}\right)\chi(k)\frac{\vert T(k,\tau)\vert^2}{k}F(x;k,\tau) dk\\
    &=-\frac{1}{4\pi i t e^{-i\tau}}\int_0^{\infty} e^{-i\sqrt{1+k^2}t}\partial_k\left(\chi(k)\frac{\vert T(k,\tau)\vert^2}{k}F(x;k,\tau)\right) dk  \\
    &+ e^{-i\sqrt{1+k^2}t}\chi(k)\frac{\vert T(k,\tau)\vert^2}{k}F(x;k,\tau)\bigg|_{k=0}^{k=\infty}
    \end{align*}
Now note that the the boundary terms vanish; At $k=\infty$, this is due to the vanishing of $\chi(k)$ as $k\to+\infty$. At $k=0$, the boundary term vanishes since $\vert T(k,\tau)\vert^2=\mathcal{O}(k^2)$ as $|k|\to0$ for $0<\tau<2\pi$, by  \eqref{eq:tk-1}.
 Since the boundary terms vanish, it follows that 
 \begin{align*}
   \alpha_l(t,x) &= \frac{1}{-4\pi i t e^{-i\tau}}\int_0^{\infty} e^{-i\sqrt{1+k^2}t}G(x;k,\tau) dk \, , \numberthis \label{eq:alphal_dkTF}
\end{align*}
where
\begin{align}\label{eq:Gdef}
G(x;k,\tau) \equiv \partial_k\left[\chi(k)\frac{\vert T(k,\tau)\vert^2}{k}F(x;k,\tau)\right]\, . 
\end{align}

\begin{remark}
Here we see why Proposition \ref{lem:t32_loweng} holds only for $\tau\in (0,2\pi)$: for the boundary terms to vanish at $k=0$, we need that $T(0,\tau)=0$. This is {\em not true} when $\tau = 0,2\pi$, since the transmission coefficient there is always $1$.
\end{remark}

The new representation of $\alpha_l$ given in \eqref{eq:alphal_dkTF} can be decomposed, in analogy with \eqref{eq:alphal_12}, into $k$ near zero ($k< t^{-1/2}k_0$) and $k$ bounded away from zero ($k\in (t^{-1/2}k_0, 2k_0)$).
For every $\tau \in (0,2\pi)$,
\begin{align}\label{eq:alphal_i_ii}
\begin{split}
    \alpha_l(t,x) &= \frac{1}{-4\pi i t e^{-i\tau}}\int_0^{\infty} e^{-i\sqrt{1+k^2}t}\chi(\sqrt{t}k) G(x;k,\tau) dk\\
     &+ \frac{1}{-4\pi i t e^{-i\tau}}\int_0^{\infty} e^{-i\sqrt{1+k^2}t}(1-\chi(\sqrt{t}k)) G(x;k,\tau) dk\\
     &=\ali(x,t) + \alii(x,t) \, .
     \end{split}
\end{align}
We estimate each term in \eqref{eq:alphal_i_ii}. First,
\begin{align}\label{eq:alphal_i_bd}
    \vert \ali(t,x)\vert &\leq \frac{1}{4\pi t}\int_0^{2k_0/\sqrt{t}}\vert G(x;k,\tau)\vert dk\ 
    \leq\ \frac{k_0}{2\pi t^{3/2}}\sup_{k}\vert G(x;k,\tau)\vert \, .
\end{align}
To estimate $\alii$ we first use integration by parts,
\begin{align*}
    \alii(t,x) &=  \frac{1}{-4\pi i t e^{-i\tau}}\int_0^{\infty} e^{-i\sqrt{1+k^2}t}(1-\chi(\sqrt{t}k)) G(x;k,\tau) dk\\
    &= \frac{1}{-4\pi i t e^{-i\tau}}\int_0^{\infty} \frac{\sqrt{1+k^2}}{-ikt}\partial_k\left(e^{-i\sqrt{1+k^2}t}\right)(1-\chi(\sqrt{t}k)) G(x;k,\tau) dk\\
    &= \frac{1}{4\pi t^2 e^{-i\tau}}\int_0^{\infty} e^{-i\sqrt{1+k^2}t}\partial_k\left(\frac{\sqrt{1+k^2}}{k}(1-\chi(\sqrt{t}k)) G(x;k,\tau)\right) dk \, .
\end{align*}
It follows that
\begin{align*}
    \vert\alii(t,x)\vert &\leq \frac{1}{4\pi t^2}\int_{k_0/\sqrt{t}}^{2k_0}\left\vert \partial_k\left(\frac{\sqrt{1+k^2}}{k}(1-\chi(\sqrt{t}k))\right) G(x;k,\tau)\right\vert dk\\
    &+\frac{1}{4\pi t^2}\int_{k_0/\sqrt{t}}^{2k_0} \left\vert \left(\frac{\sqrt{1+k^2}}{k}(1-\chi(\sqrt{t}k)) \partial_kG(x;k,\tau)\right)\right\vert dk\\
    &\le {\rm Term}_1 + {\rm Term}_2 + {\rm Term}_3,
    \end{align*} 
    where
    \begin{align*}
   {\rm Term}_1 &\equiv \frac{1}{4\pi t^2}\sup_{k}\vert G(x;k,\tau)\vert\int_{k_0/\sqrt{t}}^{2k_0}\left\vert \frac{\sqrt{1+k^2}}{k}\sqrt{t}\vert\chi'(\sqrt{t}k))\vert \right\vert \, dk \, ,\\
      {\rm Term}_2 &\equiv \frac{1}{4\pi t^2}\sup_{k}\vert G(x;k,\tau)\vert\int_{k_0/\sqrt{t}}^{2k_0}\frac{1}{k^2\sqrt{1+k^2}} \, dk \, ,\quad {\rm and} \\
      {\rm Term}_3 &\equiv\ \frac{1}{4\pi t^2}\int_{k_0/\sqrt{t}}^{2k_0} \frac{\sqrt{1+k^2}}{k} \left\vert\partial_kG(x;k,\tau)\right\vert \, dk \, .
\end{align*}
Recall that $G(x;k,\tau)$ is defined in \eqref{eq:Gdef} in terms of $F(x;k,\tau)$, which depends on the initial data, $\alpha_0$; see \eqref{eq:Fdef}.

We now provide upper bounds for each of the three terms above. These upper bounds rely on
Proposition \ref{lem:Fbounds}, which provides bounds on $G$ and its derivatives. By  changing variables ($z:\,= k\sqrt{t}$), and using that $\chi'(z)$ is compactly supported and vanishes in a neighborhood of $z=0$, we obtain (with positive constants $D_1$, $D_2$, etc., all independent of $k$ and $\tau$):
\begin{align}\label{eq:t32bd_1}
\begin{split}
    {\rm Term}_1 &\leq \frac{D_1}{t^{3/2}}\frac{\langle x\rangle}{\sin^2(\tau/2)} \
    \|\langle y\rangle\alpha_0(y)\|_{L^1(\R_y)}\ \int_{k_0/\sqrt{t}}^{2k_0} \frac{\sqrt{1+k^2}}{k}\left\vert\chi'(\sqrt{t}k)) \right\vert \,  dk \\
    &= \frac{D_1}{t^{3/2}}\frac{\langle x\rangle}{\sin^2(\tau/2)}\ \|\langle y\rangle\alpha_0(y)\|_{L^1(\R_y)}\ \int_{k_0}^{2k_0\sqrt{t}} \frac{\sqrt{1+z^2/t}}{z}\left\vert\chi'(z)) \right\vert  \, dz\\
    &\leq \frac{D_2}{t^{3/2}}\frac{\langle x\rangle}{\sin^2(\tau/2)}\
    \|\langle y\rangle\alpha_0(y)\|_{L^1(\R_y)}\, .
    \end{split}
\end{align}

Similarly,
\begin{align}\label{eq:t32bd_2}
\begin{split}
     {\rm Term}_2  &\leq \frac{D_3}{t^2}\frac{\langle x\rangle}{\sin^2(\tau/2)}\  \|\langle y\rangle\alpha_0(y)\|_{L^1(\R_y)}\ \int_{k_0/\sqrt{t}}^{2k_0}\frac{1}{k^2\sqrt{1+k^2}} \, dk\\ 
    &\leq\ \frac{D_3}{t^2}\frac{\langle x\rangle}{\sin^2(\tau/2)}\  \|\langle y\rangle\alpha_0(y)\|_{L^1(\R_y)}\ \int_{k_0/\sqrt{t}}^{2k_0}\frac{1}{k^2}  \, dk\\
    &\leq \frac{D_4}{t^{3/2}}\frac{\langle x\rangle}{\sin^2(\tau/2)}\  \|\langle y\rangle\alpha_0(y)\|_{L^1(\R_y)} \, .
    \end{split}
\end{align}

 Finally, using the change of variables $k= (k_0/\sqrt t) l$,
\begin{align}\label{eq:t32bd_3}
\begin{split}
    {\rm Term}_3 &\leq \frac{D_5}{t^2}\langle x\rangle^2\ \|\langle y\rangle^2 \alpha_0(y)\|_{L^1(\R_y)}\ \int_{k_0/\sqrt{t}}^{2k_0} \frac{\sqrt{1+k^2}}{k}\frac{1}{k(k^2+\sin^2(\tau/2))} \,  dk\\
    &=\frac{D_5}{t^2}\langle x\rangle^2 \ \|\langle y\rangle^2 \alpha_0(y)\|_{L^1(\R_y)}\ \int_{k_0/\sqrt{t}}^{2k_0} \frac{\sqrt{1+k^2}}{k}\frac{1}{k(k^2+\sin^2(\tau/2))} \,  dk \\
    &=\frac{D_6}{t^{1/2}}\ \langle x\rangle^2\ \|\langle y\rangle^2 \alpha_0(y)\|_{L^1(\R_y)}\ \int_{1}^{2\sqrt{t}} \frac{1}{l^2}\frac{1}{(k_0^2 l^2+\sin^2(\tau/2)t)} \,  dl\\
    &\leq \frac{D_6}{t^{1/2}}\ \langle x\rangle^2\ \|\langle y\rangle^2 \alpha_0(y)\|_{L^1(\R_y)}\ 
    \frac{1}{k_0^2 +\sin^2(\tau/2)t}\int_{1}^{2\sqrt{t}} \frac{1}{l^2}  \, dl\\
     &\leq \frac{D_6}{\sin^2(\tau/2)\ t^{3/2}}\ 
      \langle x\rangle^2 \ \|\langle y\rangle^2 \alpha_0(y)\|_{L^1(\R_y)} \, .
     \end{split}
\end{align}

To conclude,  we combine the upper bound \eqref{eq:alphal_i_bd}  for $\ali$, and the three upper bounds which \eqref{eq:t32bd_1}, \eqref{eq:t32bd_2}, and \eqref{eq:t32bd_3} together bound $\alii$, to get by \eqref{eq:alphal_i_ii} and the triangle inequality
\begin{align*}
|\alpha_l(t,x)|\ \le\     \frac{D_7}{\sin^2(\tau/2)\ t^{3/2}}\ 
      \langle x\rangle^2 \ \|\langle y\rangle \alpha_0(y)\|_{L^2(\R_y)}.
\end{align*}

\end{proof}

Finally, to complete the proof of Theorem \ref{thm:loweng}, we interpolate via inequality \eqref{eq:abc-ineq},
using the bounds of  Propositions  \ref{lem:t12_loweng} and \ref{lem:t32_loweng}. 
This yields \begin{align*}
    \|\langle x\rangle^{-2} \alpha_l(t,x)\|_{L^{\infty}(\R_x)} \leq \frac{D}{t^{1/2}}\frac{1}{1+\sin^2(\tau/2) t}\|\langle y\rangle\alpha_0(y)\|_{L^1(\R_y)}\ .
\end{align*}
which implies the bound of Theorem \ref{thm:loweng}.

\subsection{Remarks on the proof of \Cref{notmaintheorem}} \label{sec:notmaintheorem}

By the interpolation argument at the beginning of \Cref{sec:mainproof}, it suffices to prove both results in \Cref{notmaintheorem} in a weaker version, where $\langle t\rangle$ replaced by $\vert t \vert$. Moreover, these bounds are established with $\tau$-independent constants for high energies in \Cref{thm:high_energy}. Thus, it only remains to prove the desired inequalities with $\tau$-dependent coefficients for the low energy part of the solution $\alpha_l(x,t)$, displayed  in \eqref{eq:alphal_TF}.

The desired inequalities follow from an application of \Cref{lemma:vandercorput} similar to that in \Cref{thm:high_energy}. The middle bound provided by \Cref{lemma:vandercorput} provides the necessary inequality for \eqref{eq:noweights} while the third bound in \Cref{lemma:vandercorput} provides the inequality for \eqref{eq:betterWeight}.

\appendix

\section{Proof of Proposition \ref{prop:xi_pm}}\label{app:pf_prop_xipm}
\begin{proof}
The expressions \eqref{eq:xi+} for $\xi_+$
 and \eqref{eq:xi-} for $\xi_-$
 are solutions with the desired asymptotic behavior at infinity if and only if each of these expressions is continuous at $x=0$.
 This imposes linear inhomogeneous systems of algebraic equations  for $(A,B)$ and for $(C,D)$. The solutions are displayed \eqref{eq:b}. This proves Part 1 of 
 Proposition \ref{prop:xi_pm}.
%


Next we turn our attention to the relations \eqref{eq:Sxi_symmetries} between $\xi_+$ and $\xi_-$. Note that \eqref{eq:Sxi_symmetries2} is equivalent to  \eqref{eq:Sxi_symmetries1}.  Indeed, 
\eqref{eq:Sxi_symmetries1} implies
 \[ \overline{S(\tau)\xi_+(x;\omega,\tau)} = 
 \xi_-(-x,\overline{\omega},\tau).
 \]
 Making the replacements $x\mapsto -x$ and $\omega\to\overline{\omega}$ gives \eqref{eq:Sxi_symmetries2}. Reversing the steps recovers \eqref{eq:Sxi_symmetries1} from \eqref{eq:Sxi_symmetries2}.
 
The last item in Proposition \ref{prop:xi_pm} is the identity
 \eqref{eq:Sxi_symmetries2}, rewritten as:
 \begin{align}\label{eq:Sxi} \overline{S(\tau)\xi_{+}(-x;\overline{\omega},\tau)} &= \xi_{-}(x;\omega,\tau). \end{align}
 We will first  show  that the expressions on either side of the equality 
 \eqref{eq:Sxi} satisfy the same differential equation and  asymptotic condition at $+\infty$. Once we do that, uniqueness then implies
the equality of these expressions. 
First, to verify the equality of asymptotic behaviors at infinity, recall by \eqref{xi-bc} that 
\begin{equation}\label{c1} 
e^{-\lo x} \xi_{-}(x;\omega,\tau) \longrightarrow
v_-^{(+)}=\begin{pmatrix}
    1\\ \omega -i\lo
    \end{pmatrix}\ \textrm{as $x\to-\infty$.}  \end{equation}
     Concerning the left hand side of \eqref{eq:Sxi}, note that by \eqref{xi+bc}
     \[S(\tau)e^{\lo(-x)}\xi_+(-x,\omega,\tau) \to \begin{pmatrix}
    1\\ \omega+i\lo
    \end{pmatrix}\ \textrm{as $x\to-\infty$.}  \]
    Hence,
     \[\overline{S(\tau)e^{\lo(-x)}\xi_+(-x,\omega,\tau)} \to \begin{pmatrix}
    1\\ \bar\omega-i\lambda(\bar\omega)
    \end{pmatrix}\ \textrm{as $x\to-\infty$}  \]
    or 
     \[e^{-\lambda(\bar\omega)x}\overline{S(\tau)\xi_+(-x,\omega,\tau)} \to \begin{pmatrix}
    1\\ \bar\omega-i\lambda(\bar\omega)
    \end{pmatrix}\ \textrm{as $x\to-\infty$}  \]
    or, replacing $\omega$ by $\bar\omega$, we obtain
     \begin{equation}\label{c2}
     e^{-\lambda(\omega)x}\overline{S(\tau)\xi_+(-x,\bar\omega,\tau)} \to \begin{pmatrix}
    1\\ \omega-i\lambda(\omega)
    \end{pmatrix}\ \textrm{as $x\to-\infty$}  \end{equation}
    Therefore, by \eqref{c1} and \eqref{c2}, 
    $\overline{S(\tau)\xi_{+}(-x;\overline{\omega},\tau)}$ and 
    $\xi_{-}(x;\omega,\tau)$ satisfy the same asymptotic condition as $x\to-\infty$.

To complete the proof we now verify that both expressions in the equality \eqref{eq:Sxi}, $\overline{S(\tau)\xi_{+}(-x;\overline{\omega},\tau)}$ and 
    $\xi_{-}(x;\omega,\tau)$,  satisfy the same differential equation. We begin by first noting the following identities:
    \begin{equation}\label{eq:3id} S\sigma_3=\sigma_3 S,\quad S\sigma_1=\overline{\sigma_\star}S,\quad 
 S\sigma_\star=\sigma_1S,\end{equation}
 where we have used the abbreviated notation
  $S=S(\tau)$, $\sigma_\star=\sigma_\star(\tau)$, and $\D=\D(\tau)$. Using \eqref{eq:3id} we have
  \begin{align*}
    S\D\xi_{+}(x;\omega,\tau)=\left[i\sigma_3\partial_x + \overline{\sigma_\star}\mathbbm{1}_{x<0} + \sigma_1\mathbbm{1}_{x>0} \right]S\xi_{+}(x;\omega,\tau) = \omega S\xi_{+}(x;\omega,\tau)
  \end{align*}
  Changing variables: $x=-y$ gives
  \begin{align*}
    \left[-i\sigma_3\partial_y  + \sigma_1\mathbbm{1}_{y<0} + \overline{\sigma_\star}\mathbbm{1}_{y>0} \right]S\xi_{+}(-y;\omega,\tau) = \omega\ S\xi_{+}(-y;\omega,\tau),\ 
  \end{align*}
 taking the complex conjugate gives
  \begin{align*}
    \left[i\sigma_3\partial_y + \sigma_1\mathbbm{1}_{y<0} + \sigma_\star\mathbbm{1}_{y>0} 
    \right]\overline{S\xi_{+}(-y;\omega,\tau)} = \overline{\omega}\ \overline{S\xi_{+}(-y;\omega,\tau)}
  \end{align*}
  and finally making the replacement $\omega\to -\overline{\omega}$ gives
  \begin{align*}
 \D \overline{S\xi_{+}(-y;\overline{\omega},\tau)}  =\left[i\sigma_3\partial_y + \sigma_1\mathbbm{1}_{y<0} +\sigma_\star\mathbbm{1}_{y>0} \right]\overline{S\xi_{+}(-y; \overline{\omega},\tau)} = \omega\ \overline{S\xi_{+}(-y;\overline{\omega},\tau)}.
  \end{align*}
  Since $\xi_-(x;\omega,\tau)$ satisfies the same equation and boundary condition at $-\infty$, the proof of the symmetry properties \eqref{eq:Sxi_symmetries} is now complete.
  \end{proof}

\section{Spectral symmetry of the operator $\D(\tau)$} \label{app:spectralsymmetry}
We will show that there is a symmetry of the eigenvalue equation 
\begin{align}
    \D(\tau)\alpha = \omega\alpha\ .
\end{align}
Let $\beta(x)=\sigma_3 S(\tau)\alpha(-x)$. Then we claim that $\beta$ satisfies
\begin{align*}
    \D(2\pi-\tau)\alpha = -\omega\beta\ .
\end{align*}
To see this, replace $x\mapsto -x$ in the original equation, multiply by $\sigma_3 S(\tau)$, and use the anticommutation relations satisfied by Pauli matrices to observe
\begin{align}
    \omega\sigma_3 S(\tau)\alpha(-x) &= \sigma_3 S(\tau)(-i\sigma_3\partial_x  + \sigma_1 \mathbbm{1}_{(-\infty,0)}(-x)  + \sigma_\star(\tau)\mathbbm{1}_{[0,\infty)}(-x))\alpha(-x)\\
     &=\sigma_3 S(\tau)(-i\sigma_3\partial_x  + \sigma_1 \mathbbm{1}_{(0,\infty)}(x)  + \sigma_\star(\tau)\mathbbm{1}_{(-\infty,0]}(x))\alpha(-x)\\
     &=\sigma_3 (-i\sigma_3\partial_x  + \sigma_{\star}(2\pi-\tau) \mathbbm{1}_{(0,\infty)}(x)  + \sigma_1\mathbbm{1}_{(-\infty,0]}(x))S(\tau)\alpha(-x)\\
     &=(-i\sigma_3\partial_x  - \sigma_{\star}(2\pi-\tau) \mathbbm{1}_{(0,\infty)}(x)  - \sigma_1\mathbbm{1}_{(-\infty,0]}(x))\sigma_3 S(\tau)\alpha(-x) \\
     &=-\D(2\pi-\tau)\sigma_3 S(\tau)\alpha(-x)\ .
\end{align}
Making the definition $\beta(x)= \sigma_3 S(\tau)\alpha(-x)$ and multiplying by minus $1$, we obtain the desired equation.

\section{Limiting Resolvent matrix elements}\label{ap:resolvent_explicit}
For $k\in\R$, the limiting resolvents $\Resk_{\pm}(k,\tau)$ are given by
\begin{align}
    \Resk_{\pm}(k,\tau) &=\begin{cases} \frac{-i}{\varphi(\pm k,\tau)}\xi_{-}(x; \pm k,\tau)\xi_{+}(y; \pm k,\tau)^\top\sigma_1 & x < y\\
    \frac{-i}{\varphi(\pm k,\tau)}\xi_{+}(x; \pm k,\tau)\xi_{-}(y; \pm k,\tau)^\top\sigma_1 & x > y
    \end{cases}
\end{align}
Using the explicit formulas for the Jost solutions \eqref{eq:xi_pm}, we can write the Resolvent entries in closed form. Let $\Resk(k,\tau)$ be defined such that
\begin{align}
    \Resk_{\pm}(k,\tau) = \Resk(\pm k,\tau) . 
\end{align}
Then the entries of the kernel of $\Resk(k,\tau)$ are, (where the $\tau$-dependent coefficients are determined by \eqref{eq:b} as $\FF(k)=A(\sqrt{1+k^2})$, and similarly for $\FF, \GG, \HH$):
\begin{align}\label{eq:explicitresolvent}
    (\Resk(k,\tau)(x,y))_{1,1} &= \begin{cases}
    \frac{i}{\varphi(k,\tau)}e^{-i\tau}\left[\GG (\sqrt{1+k^2}+k)e^{ik(x+y)}+\HH (\sqrt{1+k^2}-k)e^{ik(x-y)}\right]& x > y > 0\\
    \frac{i}{\varphi(k,\tau)}e^{-i\tau}(\sqrt{1+k^2}-k)e^{ik(x-y)}& x > 0 > y\\
    \frac{i}{\varphi(k,\tau)}\left[\EE (\sqrt{1+k^2}-k)e^{ik(x-y)}+\FF (\sqrt{1+k^2}-k)e^{-ik(x+y)}\right]& 0 > x > y\\
    \frac{i}{\varphi(k,\tau)}\left[\GG(\sqrt{1+k^2}+k)e^{-i\tau}e^{ik(x+y)}+\HH (\sqrt{1+k^2}+k)e^{-i\tau}e^{-ik(x-y)}\right]& y > x > 0\\
    \frac{i}{\varphi(k,\tau)}(\sqrt{1+k^2}+k)e^{-ik(x-y)}& y > 0 > x\\
    \frac{i}{\varphi(k,\tau)}\left[\EE (\sqrt{1+k^2}+k)e^{-ik(x-y)}+\FF (\sqrt{1+k^2}-k)e^{-ik(x+y)}\right] & 0 > y > x
    \end{cases}\ ,
\end{align}

\begin{align}
    (\Resk(k,\tau)(x,y))_{1,2} &= \begin{cases}
    \frac{i}{\varphi(k,\tau)}e^{-2i\tau}\left[\GG e^{ik(x+y)}+\HH e^{ik(x-y)}\right] & x > y > 0\\
    \frac{i}{\varphi(k,\tau)}e^{-i\tau}e^{ik(x-y)} & x > 0 > y\\
    \frac{i}{\varphi(k,\tau)}\EE e^{ik(x-y)}+\FF e^{-ik(x+y)} & 0 > x > y\\
    \frac{i}{\varphi(k,\tau)}e^{-2i\tau}\left[\GG e^{ik(x+y)}+\HH e^{-ik(x-y)}\right] & y > x > 0\\
    \frac{i}{\varphi(k,\tau)}e^{-i\tau}e^{-ik(x-y)} & y > 0 > x\\
    \frac{i}{\varphi(k,\tau)}e^{-2i\tau}\left[\GG e^{ik(x+y)}+\HH e^{ik(x-y)}\right] & 0 > y > x
    \end{cases}\ ,
\end{align}
\begin{align}
    (\Resk(k,\tau)(x,y))_{2,1} &= \begin{cases}
     \frac{i}{\varphi(k,\tau)}\GG(\sqrt{1+k^2}+k)^2e^{ik(x+y)}+\HH e^{ik(x-y)}& x > y > 0\\
     \frac{i}{\varphi(k,\tau)}e^{ik(x-y)}& x > 0 > y\\
     \frac{i}{\varphi(k,\tau)}\EE e^{ik(x-y)}+\FF(\sqrt{1+k^2}-k)^2e^{-ik(x+y)}& 0 > x > y\\
     \frac{i}{\varphi(k,\tau)}\GG(\sqrt{1+k^2}+k)^2e^{ik(x+y)}+\HH e^{-ik(x-y)}& y > x > 0\\
     \frac{i}{\varphi(k,\tau)}e^{-ik(x-y)}& y > 0 > x\\
     \frac{i}{\varphi(k,\tau)}\EE e^{-ik(x-y)}+\FF(\sqrt{1+k^2}-k)^2e^{-ik(x+y)}& 0 > y > x
    \end{cases}\ ,
\end{align}
\begin{align}
    (\Resk(k,\tau)(x,y))_{2,2} &= \begin{cases}
     \frac{i}{\varphi(k,\tau)}\GG e^{-i\tau}(\sqrt{1+k^2}+k)e^{ik(x+y)}+\HH e^{-i\tau}(\sqrt{1+k^2}+k)e^{ik(x-y)} & x > y > 0\\
     \frac{i}{\varphi(k,\tau)}(\sqrt{1+k^2}+k)e^{ik(x-y)}& x > 0 > y\\
     \frac{i}{\varphi(k,\tau)}\left[\EE(\sqrt{1+k^2}+k)e^{ik(x-y)}+\FF (\sqrt{1+k^2}-k)e^{-ik(x+y)}\right]& 0 > x > y\\
     \frac{i}{\varphi(k,\tau)}e^{-i\tau}\left[\GG(\sqrt{1+k^2}+k)e^{ik(x+y)}+\HH (\sqrt{1+k^2}-k)e^{-ik(x-y)}\right]& y > x > 0\\
     \frac{i}{\varphi(k,\tau)}e^{-i\tau}(\sqrt{1+k^2}-k)e^{-ik(x-y)}& y > 0 > x\\
     \frac{i}{\varphi(k,\tau)}\left[\EE (\sqrt{1+k^2}-k)e^{-ik(x-y)}+\FF (\sqrt{1+k^2}-k)e^{-ik(x+y)}\right]& 0 > y > x
    \end{cases} \, ,
\end{align}
where as in \eqref{eq:varphi_def}, substituting $\omega=\sqrt{1+k^2}$ for real values of $k$, we get
\begin{align}\label{eq:varphik_def}
    \varphi(k,\tau) :\,= k(e^{-i\tau}+1)  -\sqrt{1+k^2}(e^{-i\tau}-1)\ ,
\end{align}

\section{Proofs of Lemmas from Section \ref{sec:high_energy}}\label{ap:bound_highERes_pf}
\subsection{Proof of Lemma \ref{lemma:vandercorput}}
    The first term is a direct consequence of the triangle inequality $$\left|\int\limits_{\R} e^{ -it\sqrt{1+k^2}+ikr}\psi_j (k) \, dk \right| \leq \int\limits_{\R} \left| e^{ -it\sqrt{1+k^2}+ikr}\right| \cdot \left| \psi_j (k)\right| \, dk = \left\| \psi _j \right\|_{L^1} \, .  $$
    The next two upper bounds are the result of Van der Corput's Lemma for oscillatory integrals:
\begin{theorem}[Van der Corput Lemma \cite{stein1993harmonic}]
    Let $\phi:\R \to \R$ be a smooth and let $\psi \in C^{\infty}_c (\R )$. If $|\partial_k^2\phi(k)|\geq \lambda \geq 0$ for all $k\in {\rm supp}(\psi)$, then there exists a constant $C>0$ such that
    \begin{equation}\label{eq:vdc}
        \left| \int\limits_{\R} 
        e^{i\phi (k)}\psi(k) \, dk \right| \leq C \lambda^{-\frac12} \left\| \partial_k \psi(k) \right\|_{L^1{(\R_k)}} \, .
    \end{equation}
\end{theorem}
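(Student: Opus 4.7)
The plan is to follow the classical approach: reduce the general $\psi$ to the case of a characteristic function of an interval via an integration-by-parts / Fubini trick, and then prove the crucial endpoint estimate by splitting the integration domain according to the size of $\phi'(k)$. First I would observe that the hypothesis $|\phi''(k)| \geq \lambda > 0$ combined with the continuity of $\phi''$ implies that $\phi''$ has constant sign on each connected component of $\operatorname{supp}(\psi)$, and hence $\phi'$ is strictly monotone there. By summing over components, we may assume without loss of generality that $\operatorname{supp}(\psi)$ is contained in a single interval $[a,b]$ on which $\phi'$ is monotone.

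The heart of the argument is the uniform bound
\[
\sup_{a \leq s \leq b} \left| \int_a^s e^{i\phi(k)}\, dk \right| \leq C \lambda^{-1/2},
\]
which I would prove by splitting $[a,s] = E_1 \cup E_2$ where $E_1 = \{k : |\phi'(k)| \leq \lambda^{1/2}\}$ and $E_2 = \{k : |\phi'(k)| > \lambda^{1/2}\}$. On $E_1$, since $\phi'$ is monotone with $|(\phi')'| \geq \lambda$, the inverse function theorem gives $|E_1| \leq 2\lambda^{-1/2}$, so the trivial bound $|\int_{E_1} e^{i\phi}\, dk| \leq 2\lambda^{-1/2}$ suffices. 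On $E_2$, which is a union of at most two subintervals (by monotonicity of $\phi'$), I would integrate by parts using the identity $e^{i\phi} = \tfrac{1}{i\phi'}\partial_k(e^{i\phi})$:
\[
\int_{E_2} e^{i\phi(k)}\, dk = \left[\frac{e^{i\phi(k)}}{i\phi'(k)}\right]_{\partial E_2} + \int_{E_2} e^{i\phi(k)} \frac{\phi''(k)}{i(\phi'(k))^2}\, dk.
\]
The boundary terms are each bounded by $\lambda^{-1/2}$ since $|\phi'| > \lambda^{1/2}$ on $E_2$. For the remaining integral, monotonicity of $\phi'$ justifies the change of variables $u = \phi'(k)$, whereupon $|\phi''/(\phi')^2|\, dk = du/u^2$ and the integral is bounded by $\int_{|u|>\lambda^{1/2}} u^{-2}\, du = 2\lambda^{-1/2}$.

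Finally, to pass from the indicator function to a general $\psi \in C^\infty_c$ with support in $[a,b]$, I would write $\psi(k) = -\int_k^b \psi'(s)\, ds$ and apply Fubini:
\[
\int_a^b e^{i\phi(k)} \psi(k)\, dk = -\int_a^b \psi'(s) \left( \int_a^s e^{i\phi(k)}\, dk\right) ds.
\]
Bounding the inner integral by $C\lambda^{-1/2}$ via the estimate above yields
\[
\left| \int_a^b e^{i\phi(k)} \psi(k)\, dk \right| \leq C\lambda^{-1/2} \int_a^b |\psi'(s)|\, ds = C\lambda^{-1/2}\|\psi'\|_{L^1},
\]
which is the desired inequality \eqref{eq:vdc}.

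The principal subtlety is the treatment of the ``stationary phase'' region $E_1$ where $\phi'$ is small: direct integration by parts is useless there because $1/\phi'$ is unbounded. The key observation, enabled precisely by the lower bound on $|\phi''|$, is that $E_1$ must be small in measure, so a crude triangle-inequality bound on that piece suffices and no cancellation is needed. A secondary point that deserves some care is verifying that the change of variables $u=\phi'(k)$ on $E_2$ is legitimate despite $E_2$ potentially being disconnected; this is handled by applying the estimate separately to each of the (at most two) subintervals of $E_2$ inherited from the monotonicity of $\phi'$.
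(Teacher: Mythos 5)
Your proof is correct, but note that the paper does not actually prove this statement: it is quoted verbatim as a known result with a citation to Stein's \emph{Harmonic Analysis}, and is used as a black box inside the proof of Lemma \ref{lemma:vandercorput}. So there is no in-paper argument to compare against; what you have written is the standard textbook proof of the Van der Corput lemma at second order, and all of its steps check out. The decomposition into $E_1=\{|\phi'|\le\lambda^{1/2}\}$ and $E_2=\{|\phi'|>\lambda^{1/2}\}$, the measure bound $|E_1|\le 2\lambda^{-1/2}$ from the mean value theorem, the integration by parts with the monotone substitution $u=\phi'(k)$ on each of the (at most two) components of $E_2$, and the final Fubini step converting the endpoint estimate into the $\|\psi'\|_{L^1}$ bound are exactly the classical argument.

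Two small points worth a remark. First, the statement as printed allows $\lambda\ge 0$, in which case the bound is vacuous for $\lambda=0$; your tacit restriction to $\lambda>0$ is the only nontrivial case. Second, your reduction to a single interval on which $\phi'$ is monotone deserves one more line: the hypothesis $|\phi''|\ge\lambda$ is imposed only on $\operatorname{supp}(\psi)$, so if that support is disconnected you must decompose $\psi=\sum_j\psi_j$ with each $\psi_j$ supported in one component (all derivatives of $\psi$ vanish on $\partial\operatorname{supp}(\psi)$, so each $\psi_j$ is still smooth), apply the single-interval estimate to each piece, and use $\sum_j\|\psi_j'\|_{L^1}=\|\psi'\|_{L^1}$. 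In the paper's application the cutoffs $\psi_j$ are supported in $[2^j,2^{j+2}]\cup[-2^{j+2},-2^j]$ and the phase $\phi(k)=-t\sqrt{1+k^2}+kr$ has $\phi''$ of constant sign on all of $\R$, so this is harmless there.
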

By \eqref{eq:vdc}, we only need to compute the second derivative in $k$ of the phase term, i.e.,
\begin{align*}
    \left| \partial _{kk} \left[-t\sqrt{1+k^2}+kr\right] \right| &=  \frac{t}{(1+k^2)^{\frac32}} = t\langle k \rangle ^{-3} \gtrsim t2^{-3(j+2)} \, ,
\end{align*}
since $\psi_j (k)$ is supported on $[2^j, 2^{j+2}]$. Hence, substituting this lower bound as $\lambda$ in \eqref{eq:vdc}, we get the second upper bound in Lemma \ref{lemma:vandercorput}. 

To obtain the third upper bound in Lemma \ref{lemma:vandercorput}, we first extract more time-decay using integration by parts, and then apply Van der Corput's Lemma. Noting that the domain of integration is bounded away from $k=0$, we have
\begin{align*}
    \int\limits_{\R} &e^{-it\sqrt{1+k^2}+ikr} \psi_j (k) \, dk =  \int\limits_{\R} \partial_k [e^{-it\sqrt{1+k^2}}] e^{ikr} \frac{1}{-it}\frac{\sqrt{1+k^2}}{k} \psi_j (k) \, dk \\
    &= \left[e^{-it\sqrt{1+k^2}} e^{ikr} \frac{1}{-it}\frac{\sqrt{1+k^2}}{k} \psi_j (k) \right]_{k=-\infty}^{k=\infty} +\frac{1}{it}\int\limits_{\R} e^{-it\sqrt{1+k^2}} \partial_k \left[e^{ikr} \frac{\sqrt{1+k^2}}{k} \psi_j (k) \right]\, dk\\
    &=\frac{1}{it}\int\limits_{\R} e^{-it\sqrt{1+k^2}+ikr} (ir+\partial_k)\left[\frac{\sqrt{1+k^2}}{k} \psi_j (k) \right]\, dk \, ,
\end{align*}
where the boundary terms vanish since $\psi_j$ is compactly supported. Now we can apply Van der Corput Lemma \eqref{eq:vdc} on the last integral, with the lower bound on the phase term as before, and get the last upper bound in Lemma \ref{lemma:vandercorput}.

\subsection{Proof of Proposition  \ref{lem:bound_highERes}}

We prove $\mathcal{I}_j^1, \mathcal{I}_j^2, \mathcal{I}_j^3\leq C 2^{-(\frac32+\varepsilon) j} \langle x \rangle \langle y \rangle$, for some $C>~0$.
Each term $L(k)$ is based on the explicit expressions for the resolvent kernel Appendix \ref{ap:resolvent_explicit}. It requires the definitions of $A,B,C,D$ as they appear in \eqref{eq:ABCD} (as well as that of $\EE,\FF,\GG,\HH$) and of $\varphi(k,\tau)$ as defined in \eqref{eq:varphik_def}. We find that $L(k)$ is a rational function where both the numerator and the denominator are quadratic in $k$ and $\sqrt{1-k^2}$, and we can write, for some nonzero $a,b,c,d\in~\mathbb{C}$:
$$L(k) = \frac{(ak+b\sqrt{1+k^2})(ck+d\sqrt{1+k^2})}{k\varphi(k,\tau)} \, .$$
Note that, using the asymptotics of $\varphi$, $\EE,\FF,\GG,\HH$ as $k\to \infty$, we have that 
\begin{equation}\label{eq:Lbounds}
    |L(k)|\lesssim {\rm const} \, , \qquad |L'(k)|, |L''(k)| \lesssim k^{-3}\, , \qquad {\rm as}~~ k\to \infty \, . 
\end{equation}

We first bound $\mathcal{I}_j^1$ from above since, as we shall see, it is the largest term (or, the slowest to decay as $k\to \infty$), and will yield the overall upper bound on $I_j$. Note that by the triangle inequality
    \begin{align*}
    \mathcal{I}_j^1 &\lesssim \int_{[k_02^j,k_02^{j+1}]} \langle k\rangle ^{-3/2-\varepsilon}\left( |x\pm y |\cdot|L'(k)| + |L''(k)|\right) \, dk  \numberthis \label{eq:LpLpp_intj} \\
    &+ \int_{[k_02^j,k_02^{j+1}]} \langle k\rangle ^{-3/2-\varepsilon} \left[ \left(|x\pm y|\cdot |L(k)|+ 2|L'(k)| \right)|\chi_j ' (k) |+ |L(k)\chi_j ''(k)| \right] \, dk \, . \numberthis \label{eq:int_chijp}
\end{align*}
Since 
$\langle k \rangle^{-\frac32-\varepsilon}$ is strictly decreasing, and using the upper bounds in \eqref{eq:Lbounds}, we have
$$ |\textrm{\eqref{eq:LpLpp_intj}}| \lesssim 2^{-(\frac32 +\varepsilon) j} \int_{[k_02^j,k_02^{j+1}]} k^{-3} \, dk\  \langle x\rangle \langle y\rangle\, , $$
which vanishes as $j\to \infty$, and so \eqref{eq:LpLpp_intj} is uniformly bounded for all $j \in \mathbb{N}$. To bound \eqref{eq:int_chijp}, we note that the $L'(k)$ vanishes similarly, and so we only have to consider the $L(k)\chi'_j (k)$ and $L(k)\chi _j '' (k)$. 

Since $\chi_j$ is a smooth identifier function, $\chi_j'$ and $\chi_j ''$ are compactly supported, localized around the edges of ${\rm supp}(\chi_ j)$. Hence the value of the integral in \eqref{eq:int_chijp} does not depend on the length of ${\rm supp}\chi_ j$. Since $L(k)$ tends to a constant (see \eqref{eq:Lbounds}), the integral of the terms $|L(k)||\chi ' _k (k)|$ and $|L(k)|^2 |\chi_j ''(x)|$ also tend to a constant as $j\to \infty$. Therefore,  \eqref{eq:int_chijp} is uniformly bounded. Hence, we obtained the desired bound for $\mathcal{I}_j^1$.

The remaining two terms, $\mathcal{I}_j^2$ and $\mathcal{I}_j^3$ (see \eqref{eq:Ij123}), are bounded in a similar manner. In fact, both terms decay even faster, esentially like $k^{-5/2 -\varepsilon}$, and so the overall decay of ${I}_j \lesssim \sum_{\ell = 1,2,3} \mathcal{I}_j ^{\ell}$ is dominated, in high $j$ values (high energy), by that of $\mathcal{I}_j^1$.

\section{Bounds on $F(x;k,\tau)$ and $G(x;k,\tau)$}\label{ap:Fbounds}
Recall the definitions of $F(x;k,\tau)$ in \eqref{eq:Fdef} and $G(x;k,\tau)$ in \eqref{eq:Gdef}
\begin{align*}
    F(x;k,\tau) &\equiv \int_{\R} f(x,y;k,\tau)\langle k\rangle^{-3/2-\eps}\alpha_0(y)dy\ , \\
    f(x,y;k,\tau)&\equiv \begin{pmatrix}\xi_{+,1}(x;k,\tau)\xi_{+,2}(y;-k,\tau)& \xi_{+,1}(x;k,\tau)\xi_{+,1}(y;-k,\tau)\\
    \xi_{+,2}(x;k,\tau)\xi_{+,2}(y;-k,\tau)& \xi_{+,2}(x;k,\tau)\xi_{+,1}(y;-k,\tau)\end{pmatrix}\\
    &+\begin{pmatrix}e^{-i\tau}\xi_{-,1}(x;k,\tau)\xi_{-,2}(y;-k,\tau) & e^{-i\tau}\xi_{-,1}(x;k,\tau)\xi_{-,1}(y;-k,\tau)\\
    e^{-i\tau}\xi_{-,2}(x;k,\tau)\xi_{-,2}(y;-k,\tau ) &     e^{-i\tau}\xi_{-,2}(x;k,\tau)\xi_{-,1}(y;-k,\tau)\end{pmatrix}\ ,\\
    G(x;k,\tau) &\equiv \partial_k\left(\chi(k)\frac{\vert T(k,\tau)\vert^2}{k}F(x;k,\tau)\right) \, .
\end{align*}
\begin{prop}[Bounds on $\partial^j_kF(x;k,\tau)$ and $\partial^j_kG(x;k,\tau)$] \label{lem:Fbounds}
    There exist constants $A_j$, for $j=0,1,2$ and $B_l$ for $l=0,1$, which depend on $k_0$,  such that for every $x\in\R$, $k\in (0,2k_0]$ and $\tau\in(0,\tau)$:
   \begin{align}
       \vert\partial_k^{j} F(x;k,\tau)\vert &\leq A_j\langle x \rangle^j\|\langle y\rangle^j\alpha_0(y) \|_{L^1(\R_y)}  \, \\
        \vert \partial_k^l G(x;k,\tau)\vert &\leq B_l\frac{\langle x\rangle^{l+1}}{k^l(k^2+\sin^2(\tau/2))}\|\langle y\rangle^{l+1}\alpha_0(y)\|_{L^1(\R_y)}\, . 
   \end{align}
\end{prop}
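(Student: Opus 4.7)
The strategy is to exploit the explicit formulas for the Jost solutions from Proposition \ref{prop:xi_pm} (specialized to the essential spectrum via Proposition \ref{prop:jostkdef}) and carry out a direct Leibniz-rule bookkeeping. On the compact $k$-window $[0,2k_0]$, each component of $\xi_\pm(x;\pm k,\tau)$ is a bounded coefficient (uniform in $\tau$) times $e^{\pm ikx}$, so the only source of growth under $k$-differentiation is the factor $x$ brought down by $\partial_k e^{\pm ikx}$. Hence
\[
\bigl|\partial_k^{j}\xi_{\pm,m}(x;\pm k,\tau)\bigr| \;\lesssim\; \langle x\rangle^{j}, \qquad m=1,2,\ j=0,1,2,
\]
uniformly for $k\in[0,2k_0]$ and $\tau\in[0,2\pi]$. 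The same holds in the $y$-variable for $\xi_\pm(y;\pm k,\tau)$. The coefficient $\langle k\rangle^{-3/2-\eps}$ and its derivatives are harmless on this bounded set.

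\textbf{Bounds on $\partial_k^{j}F$.} Writing $f(x,y;k,\tau)$ as the sum of products of Jost-solution components (as in the definition), the Leibniz rule yields
\[
\bigl|\partial_k^{j} f(x,y;k,\tau)\bigr| \;\lesssim\; \langle x\rangle^{j}\langle y\rangle^{j},\qquad k\in[0,2k_0],
\]
with an implicit constant depending only on $k_0$ and $\eps$. Differentiation under the integral sign is justified by dominated convergence (since $\alpha_0\in L^1$ and the integrand is uniformly bounded by an $L^1$-function of $y$). Integrating against $\alpha_0(y)\,dy$ and using the triangle inequality then gives
\[
\bigl|\partial_k^{j} F(x;k,\tau)\bigr| \;\leq\; A_{j}\,\langle x\rangle^{j}\,\bigl\|\langle y\rangle^{j}\alpha_0(y)\bigr\|_{L^1(\R_y)}.
\]

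\textbf{Bounds on $\partial_k^{l}G$.} The key algebraic simplification is that the singular factor in $G$ collapses:
\[
\frac{|T(k,\tau)|^2}{k} \;=\; \frac{k}{k^{2}+\sin^{2}(\tau/2)}.
\]
A direct computation shows that for $n=0,1,2,3$,
\[
\left|\partial_k^{n}\!\left(\tfrac{k}{k^{2}+\sin^{2}(\tau/2)}\right)\right| \;\lesssim\; \frac{1}{k^{n-1}\bigl(k^{2}+\sin^{2}(\tau/2)\bigr)},
\]
with the understanding that the right-hand side is read as $\langle k\rangle/(k^{2}+\sin^{2}(\tau/2))$ when $n=0$; the elementary bounds use that $|$numerator$|$ can always be absorbed into a single power of $(k^{2}+\sin^{2}(\tau/2))$. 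Applying the Leibniz rule to $G=\partial_k[\chi(k)\tfrac{k}{k^{2}+\sin^{2}(\tau/2)}F]$ (and one more $\partial_k$ for the case $l=1$), and then inserting the bounds from the previous step for $\partial_k^{j}F$ with $j\le l+1$, we obtain
\[
\bigl|\partial_k^{l}G(x;k,\tau)\bigr| \;\leq\; B_{l}\,\frac{\langle x\rangle^{l+1}}{k^{l}\bigl(k^{2}+\sin^{2}(\tau/2)\bigr)}\,\bigl\|\langle y\rangle^{l+1}\alpha_0(y)\bigr\|_{L^1(\R_y)},
\]
with constants $B_l$ depending only on $k_0$ (through $\chi$ and its derivatives) and on $\eps$.

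\textbf{Main obstacle.} There is no real analytic difficulty, only bookkeeping: the proof hinges on observing that the troublesome factor $|T(k,\tau)|^{2}/k$ is in fact smooth in $k$ at $k=0$ once written as $k/(k^{2}+\sin^{2}(\tau/2))$, and that each of its $k$-derivatives can be uniformly absorbed into a single denominator $(k^{2}+\sin^{2}(\tau/2))$ multiplied by the appropriate inverse power of $k$. The main care is to keep track of how the cutoff $\chi$ truncates the $k$-integral and to ensure that the estimates are uniform in $\tau\in(0,2\pi)$, which they are, since at no stage do we divide by $\sin(\tau/2)$.
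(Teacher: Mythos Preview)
Your proposal is correct and follows essentially the same route as the paper: bound the Jost solutions and their $k$-derivatives by powers of $\langle x\rangle$, pass these to $F$ via Leibniz, and then handle $G$ by combining the $F$-bounds with the explicit form of $|T(k,\tau)|^2/k$. Your treatment of $G$ is in fact slightly cleaner than the paper's, since you collapse $|T|^2/k$ to the single rational function $k/(k^2+\sin^2(\tau/2))$ before differentiating, whereas the paper differentiates $\chi/k$ and $|T|^2$ separately and invokes Proposition~\ref{lem:TransmissionBounds}.

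One caveat worth a sentence in your write-up: the assertion that ``each component of $\xi_\pm(x;\pm k,\tau)$ is a bounded coefficient times $e^{\pm ikx}$'' is literally true only on the half-line where the solution is normalized. On the opposite half-line the Jost solution is a combination of \emph{two} exponentials with coefficients $A,B$ (resp.\ $C,D$) that individually carry a $1/k$ factor (see \eqref{eq:ABCD}). The claimed bound $|\partial_k^j\xi_\pm(x;k)|\lesssim\langle x\rangle^j$ still holds because of the identity $A+B=e^{-i\tau}$ (equivalently, because the data at $x=0$ for the constant-coefficient ODE on the other half-line depend smoothly on $k$), but this cancellation is what actually does the work and should be made explicit. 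The paper's own proof is equally brief on this point.
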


\begin{proof}
To bound $F(x;k,\tau)$ and its derivatives, we first bound the Jost solutions (see Proposition~\ref{prop:jostkdef}) $\jostk_{\pm} (x;k,\tau)$  and their derivatives with respect to $k$. Every coordinate of $f$ can be written as a sum of product of Jost solutions. Thus, $F(x;k,\tau)$ can be written as a finite sum of the form
$$F (x;k,\tau) = \sum\limits_{j} \int\limits_{\R} \phi_{j,1}(x;k,\tau) \phi_{j,2}(y;k,\tau) \langle k\rangle^{-3/2-\eps} \alpha(y) \, dy$$
where $\phi_{j,\ell}(z,k,\tau)$ is an expression of the form
\begin{equation}\label{eq:Fphi}\frac{(ak+b\sqrt{1+k^2})(ck+d\sqrt{1+k^2})}{k}e^{\pm ikz}  \, , \qquad \, ,
\end{equation}
for some real parameters $a,b,c,d$. Moreover, uniformly in $z=x$ or $y$,  we have $|\phi(z,k,\tau)|\lesssim \langle k\rangle $ by the explicit form of the resolvent kernel entries (see Appendix \ref{ap:resolvent_explicit}), and it follows that 
$$|F(x;k,\tau)| \leq 
 A_0 \langle k \rangle ^{\frac12-\eps} \int\limits_{\R} |\alpha_0 (y)| \, dy \, . $$
 Since we have constrained $k$ to vary in $[0,2k_0]$, we have the desired upper bound for $F(x;k,\tau)$.
 
 Turning now to $k-$ derivatives of $F(x;k,\tau)$, we note that $k\mapsto F(x;k,\tau)$  is smooth functions of $k$
  and hence for each $j$, $\partial^j_k F(x;k,\tau)$ is uniformly bounded on 
   for $k\in[0,2k_0]$. The bounds on $\partial^j_k F(x;k,\tau)$ involve the weights $\langle x \rangle ^j$ and $\langle y \rangle^j$, due to the appearance of factors of $e^{ikx}$ and $e^{iky}$ in $\phi_{j,l}(x;k,\tau)$; see \eqref{eq:Fphi}. Hence,   $\partial_k^j F(x;k,\tau)$  gives rise, via  $\phi_{j,1} (x;k, \tau)\phi_{j,2}(y;k,\tau)$, to terms whose behavior is bounded by $\langle x\rangle^j\langle y\rangle^j$, leading to the asserted bounds for the derivatives of $F$.

We now turn to the bounds on 
$G(x; k,\tau)$. Expanding the expression, we have
\begin{align}
    G(x;k,\tau) &= \frac{k\chi'(k)-\chi(k)}{k^2}\vert T(k,\tau)\vert^2 F(x;k,\tau) + \frac{\chi(k)}{k}\vert T(k,\tau)\vert^2 \partial_k F(x;k,\tau)\\
    &\qquad + \frac{\chi(k)}{k}\partial_k \vert T(k,\tau)\vert^2 F(x;k,\tau).
\end{align}
Applying the above bounds for $F(x;k,\tau)$  as well as those for $T(k,\tau)$ in \Cref{lem:TransmissionBounds}, we obtain
\begin{align}
   & \vert G(x;k,\tau)\vert \\
    &\quad \lesssim\frac{1}{k^2}\frac{k^2}{k^2+\sin^2(\tau/2)}\|\alpha_0\|_{L^1(\R)}+ \langle x\rangle \|\langle y\rangle \alpha_0(y)\|_{L^1(\R_y)}\  +\frac{1}{k}\frac{2k}{k^2+\sin^2(\tau/2)}\|\alpha_0\|_{L^1(\R)}\\
    &\lesssim \frac{\langle x\rangle}{k^2+\sin^2(\tau/2)}\|\langle y\rangle\alpha_0(y)\|_{L^1(\R_y)},  
\end{align}
with implied constants which are independent of $k$ and $\tau$. The argument for $\partial_k G(x;k,\tau)$ follows similarly using the product rule as well as the bounds on $\partial_k^2 F(x;k,\tau)$ and $\partial_k^2 \vert T(k,\tau)\vert^2$.
This completes the proof of Proposition \ref{lem:Fbounds}.

\end{proof}

\nocite{*}
\printbibliography

\end{document}